\documentclass[11pt]{amsart}
\usepackage[margin=1.05in]{geometry}
\usepackage{xr-hyper,needspace}
\usepackage[T1]{fontenc}
\usepackage{lmodern,amsmath,amssymb,amsthm,mathtools,booktabs,array,longtable,mathrsfs,microtype,xcolor,hyperref,enumitem}
\hypersetup{colorlinks=true,linkcolor=blue!55!black,citecolor=blue!55!black,urlcolor=blue!55!black,pdftitle={Frobenius multiplicities and F-signatures of Grassmannians of two-planes}}
\newtheorem{theorem}{Theorem}[section]
\newtheorem{proposition}[theorem]{Proposition}
\newtheorem{lemma}[theorem]{Lemma}
\newtheorem{corollary}[theorem]{Corollary}
\theoremstyle{definition}

\newtheorem{example}[theorem]{Example}
\newtheorem{remark}[theorem]{Remark}
\newcommand{\Gr}{\operatorname{Gr}}
\newcommand{\ch}{\operatorname{ch}}
\newcommand{\Ind}{\operatorname{Ind}}

\newcommand{\rk}{\operatorname{rank}}
\newcommand{\rank}{\operatorname{rank}}

\newcommand{\SL}{\operatorname{SL}}
\newcommand{\Tor}{\operatorname{Tor}}
\newcommand{\Hom}{\operatorname{Hom}}

\newcommand{\Sym}{\operatorname{Sym}}

\newcommand{\adj}{\operatorname{adj}}

\title[Frobenius multiplicities of Grassmannians of two-planes]{Frobenius multiplicities and $F$-signatures of Grassmannians of two-planes}
\date{September 29, 2026}
\begin{document}
\author[Lyle]{Justin Lyle}
\email[Justin Lyle]{jll0107@auburn.edu}
\urladdr{https://jlyle42.github.io/justinlyle/}
\address{Department of Mathematics and Statistics \\ 305 W Samford Avenue \\
Auburn University, AL 36849}

\subjclass[2020]{Primary 13A35; Secondary 14M15}

\keywords{$F$-signature, Grassmannians, FFRT, Frobenius, Characteristic $p$, Invariant Theory}

\begin{abstract}
Raedschelders, \v Spenko and Van den Bergh proved that the Pl\"ucker
coordinate ring $C_n$ of the Grassmannian $\Gr(2,n)$ has finite
Frobenius representation type when the characteristic $p$ satisfies $p\ge\max\{n-2,3\}$, and they determined the
indecomposable summands of its Frobenius pushforwards up to nonzero
multiplicity. We give nonrecursive finite formulas for every entry of
the Frobenius multiplicity matrix and prove that each entry is a
polynomial in $p$ of degree at most $2n-3$.
Consequently the $F$-signature of $C_n$, and the asymptotic proportion of
every indecomposable summand, is a single rational function of $p$
throughout the characteristic range. Its limit as $p\to\infty$ is the
leading coefficient of the first free multiplicity, which we evaluate in
closed form through Bernoulli numbers. For
$\Gr(2,4)$ we find $s_p(C_4)=(13p^2+8)/(5(3p^2+2))$, and we calculate the analogous rational
functions for $\Gr(2,5)$ and $\Gr(2,6)$ with computer assistance.
We also construct the full characteristic polynomial from the
Riemann--Roch weights and two residual matrices whose entries have
degree at most $n-1$ in $p$. In the sequel, we use this work to show rank-two determinantal rings have FFRT, and to calculate their $F$-signatures. 
\end{abstract}
\maketitle

\section{Introduction}
Let $k$ be an algebraically closed field of characteristic $p>0$ and
$R$ a positively graded $k$-algebra with $R_0=k$. Write $F^e_*R$ for
$R$ regarded as a module over itself through the $e$th Frobenius map and let $a_e(R)$ denote the maximal rank of an $R$-free summand of $F^e_*R$.
The ring $R$ has \emph{finite Frobenius representation type} (FFRT) if the
indecomposable summands of all the $F^e_*R$ lie in a finite set of
isomorphism classes, up to graded shift \cite{SVdB}. Such rings have
especially tractable Frobenius behavior. Indeed, once the multiplicity of each
summand in the pushforward of each other is known, the \emph{$F$-signature}
\[
 s(R):=\lim_{e\to\infty}\frac{a_e(R)}{p^{e\dim R}}
\]
\cite{HL,Tucker}, and the asymptotic proportions of the other summands
follow from a Perron eigenvector calculation \cite{SeibertFFRT,Yao}. In practice,
however, determining the multiplicities is considerably harder than identifying the summands themselves, and this information is needed for the $F$-signature calculation.

The Pl\"ucker coordinate ring
\[
 C_n=k[\Gr(2,n)]=\Sym(k^n\otimes k^2)^{\SL_2},\qquad \dim C_n=2n-3,
\]
is the fundamental example. Raedschelders, \v Spenko and Van den Bergh
prove that it has FFRT for $p\ge\max\{n-2,3\}$
\cite{RSVdB1,RSVdB2}; as they note, this is the first non-trivial
example of FFRT for invariants of a group that is not linearly
reductive. They identify a family $\mathscr G_n$ of
$(n-2)+(n-3)^2$ indecomposable modules, which consists of the modules of
covariants $S\{j\}$, $0\le j\le n-3$ and certain syzygy modules
$K\{j,k\}$, $1\le j,k\le n-3$, and they prove that $F_*C_n$ contains exactly
the $S\{j\}$ and the $K\{j,j\}$, while every member of $\mathscr G_n$
occurs in $F^e_*C_n$ for $e\ge2$ \cite[Theorems~1.5--1.6]{RSVdB2}.
Their methods determine which summands occur, but outside of the first pushforward of $R$ \cite[Theorem 15.1]{RSVdB1}, they do not determine how often.  Our first main result completes this picture by providing these missing multiplicities.

For each admissible prime $p$, let $A_n(p)$ be the multiplicity
matrix on this family, so the entry $A_n(p)_{M',M}$ counts copies of $M'$
in $F_*M$ with all grading shifts counted together. In particular, rows index
the summands being counted, and columns index the modules being
pushed forward.

\begin{theorem}\label{thm:A}
For any fixed $n \ge 4$, every entry of $A_n(p)$ agrees with a polynomial in
$\mathbb Q[p]$ of degree at most $2n-3$ for every prime
$p\ge\max\{n-2,3\}$.
\end{theorem}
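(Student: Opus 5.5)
The plan is to compute the multiplicities through the $\SL_2$-equivariant picture upstairs: $C_n = S^{\SL_2}$ with $S=\Sym(k^n\otimes k^2)$, and each $M\in\mathscr G_n$ is built from modules of covariants $S\{j\}=(S\otimes D^j)^{\SL_2}$, where $D^j$ is the $j$th symmetric/divided power representation of $\SL_2$.

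\textbf{Step 1: pushforwards of covariants.} Write $F_*S$ as a graded $S$-module: it is free with basis the monomials of exponent $<p$ in the $2n$ variables. As an $\SL_2$-representation this basis space is $(\Sym^{<p}(k^2))^{\otimes n}$, where $\Sym^{<p}$ denotes the truncated symmetric algebra. Then
\[
F_*(S\otimes D^j)=S\otimes\bigl(F_*D^j\otimes(\Sym^{<p}k^2)^{\otimes n}\bigr)
\]
up to Frobenius twist, and $F_*S\{j\}$ is its $\SL_2$-invariants. For $p\ge n-2$ the relevant weights lie in a range where, following RSVdB, the representation $V=D^{j}{}^{(1)}\otimes(\Sym^{<p}k^2)^{\otimes n}$ decomposes, up to the modules that produce the $K\{j,k\}$, into pieces $D^m$ and certain tilting/Weyl modules. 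The multiplicity of $S\{m\}$ is then the number of times $D^m$ occurs. I will count this through characters: the character of $\Sym^{<p}k^2$ is $\frac{x^p-x^{-p}}{x-x^{-1}}$. The multiplicity of $D^m$ is a coefficient extraction in
\[
\Bigl(\tfrac{x^p-x^{-p}}{x-x^{-1}}\Bigr)^{n}\chi_{D^j}(x^p)\,(x-x^{-1}),
\]
restricted to a window of weights of width depending on $n$ and $p$.

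\textbf{Step 2: polynomiality.} Expanding $(x^p-x^{-p})^n$ binomially, each coefficient extraction becomes a finite alternating sum of the form $\sum_i (-1)^i\binom ni\, N(a_i p+b_i)$. Here $N(t)$ counts lattice points, namely the number of solutions in $[0,p)^n$ with a fixed weight sum, and is a piecewise polynomial of degree $n-1$ in $t$. Since the breakpoints are multiples of $p$ shifted by constants bounded in terms of $n$, for $p\ge n-2$ every argument lands in a fixed chamber. Each summand is therefore a polynomial in $p$, and the total multiplicity counted over all grading shifts is a polynomial. Summing over the grading shifts, which range over $O(p)$ values for each of the $n-2$ directions, raises the degree. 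I will bound the degree by noting that the total rank of $F_*M$ is $p^{\dim C_n}\rank M=p^{2n-3}\rank M$, so every multiplicity is at most a constant times $p^{2n-3}$. A polynomial with that growth has degree $\le 2n-3$.

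\textbf{Step 3: the $K\{j,k\}$ entries.} The $K\{j,k\}$ are syzygies fitting into short exact sequences among the $S\{j\}$'s, twisted by $D^k$ as well. I will use the fact that $F_*$ is exact and that these sequences split after pushforward into the FFRT family (by RSVdB's identification of summands). Then the multiplicity of $K\{j,k\}$ in $F_*M$ is determined by solving a triangular linear system, with coefficients independent of $p$, in the character counts for $S\{m\}$ and for the Weyl-type pieces $\Delta(a)\otimes\Delta(b)^{(1)}$. The latter are again coefficient extractions of the same shape, hence polynomial by Step 2.

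The main obstacle is Step 3, together with the uniform chamber argument. One must verify that the decomposition of $V$ into indecomposable $\SL_2$-summands has a shape uniform in $p$ for $p\ge\max\{n-2,3\}$. The only threat is the boundary case $p=n-2$, where weights may hit a wall; I would check this directly by showing the character windows close off exactly there.
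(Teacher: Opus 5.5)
Your Step~3 fails, and it carries most of the content of the theorem. You propose to get the $K\{j,k\}$ multiplicities, and implicitly the covariant multiplicities in $F_*K\{j,k\}$, by pushing forward the resolution sequences, letting them split, and solving a $p$-independent linear system in character counts. None of these steps is available.
\begin{enumerate}
\item The sequences $0\to K_{jk}\to E_{j,k}\to K_{j,k-1}\to0$ need not stay exact after taking $\SL_2$-invariants, since $\SL_2$ is not linearly reductive in characteristic $p$. The paper uses exactness only on the punctured cone $U$, after descent along the torsor.
\item $F_*$ of a non-split sequence does not become split just because its terms decompose into members of $\mathscr G_n$.
\item Krull--Schmidt multiplicities are not additive on non-split sequences.
\end{enumerate}
More decisively, character counts are additive, and on $U$ the $(n-2)+(n-3)^2$ classes of $\mathscr G_n$ span only a $\lfloor n/2\rfloor$-dimensional subspace of $K_0(U)_{\mathbb Q}$ (Theorem~\ref{gr:rr-eigenrows}, Lemma~\ref{gr:boundary-bases}). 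So additive data leaves the multiplicities underdetermined. Already for $n=4$ one has $[K\{1,1\}]=4[C_4]-[S\{1\}]$ on $U$. Yet in $F_*S\{1\}$ the targets $S\{1\}$ and $K\{1,1\}$ occur $\tfrac p{15}(p^4+10p^2+4)$ and $\tfrac p{15}(p^2-1)(p^2-4)$ times, and no linear system in characters separates these. This is why the paper stresses that $K_{jk}\otimes_SQ$ is not known to be tilting.

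What you are missing is non-additive cohomological input, one mechanism per type of entry.
\begin{enumerate}
\item Higher $G_1$-cohomology of $T(j)\otimes S$ produces the $K\{r,r\}$-targets of a covariant source as level-$(p-2)$ fusion counts $H_{n,j}$. It also forces the correction $-(2^n-2)H_{n,j}$ in the free row.
\item The Raedschelders--\v{S}penko--Van den Bergh spectral sequence gives the syzygy targets of syzygy sources, through the counts $\mathcal O$ and the atoms $E_{j;q,e}$. After fixing a tuple and torus weight it is concentrated in one column at $E_3$, with at most one surviving class.
\item The covariant targets of syzygy sources require three further steps. First, reconstruct $\ch L_{jk}^{B_1}$ from Tate cohomology by Jordan-block arithmetic over $k[f]/(f^p)$ (Lemma~\ref{det:jordan}). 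Then apply Euler induction, justified by $R^1\Ind=0$. Finally, subtract the syzygy characters as in \eqref{gr:KS}.
\end{enumerate}

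Steps~1 and~2 also need repair.
\begin{enumerate}
\item There is no $\SL_2$-equivariant isomorphism $S\cong S^p\otimes Q$: already $\langle x^p,y^p\rangle\subset\Sym^pV$ has no stable complement. The decomposition comes from the structure of $(T(j)\otimes S)^{G_1}$, not from a coefficient representation.
\item For $l\ge1$, the multiplicity of $S\{l\}$ in $F_*S\{j\}$ is that of the tilting summand $T((l+2)p-2)$ in $T(j)\otimes Q$. It is readable from the character only because this module is tilting, by inverting against the two-term characters $\chi_{ip}+\chi_{ip-2}$ and $\chi_{ip-2}+\chi_{(i-2)p}$.
\item The source $T(j)$ enters untwisted, so $\chi_{D^j}(x^p)$ is wrong.
\item $\frac{x^p-x^{-p}}{x-x^{-1}}$ is the Steinberg character, of dimension $p$. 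The $p^2$-dimensional truncated ring in one pair of variables has character $\frac{(1-x^p)(1-x^{-p})}{(1-x)(1-x^{-1})}$.
\item In Step~2 the arguments do not all lie in a fixed chamber throughout the range. For the weight counts, a term $(\alpha-t)p+\beta$ with $\alpha-t=1$ can be negative, down to $-n-1$ at $p=n-2$. Polynomiality survives only because such terms have $N\in[-(r-1),-1]$, where $\binom{N+r-1}{r-1}$ vanishes as a polynomial (Lemma~\ref{gr:binomial}).
\item Your chamber argument does not cover the fusion counts $H_{n,q}$ at all. These need their own evaluation, which the paper does via Verlinde and cosecant power sums (Lemma~\ref{gr:fusion}).
\item At $p=n-2$ you must check that the one failure of the two-term tilting characters, $T(p(p+1))=T(p)\otimes T(p)^{[1]}$, only affects an unused coefficient (Lemma~\ref{gr:boundary}).
\end{enumerate}
Your degree bound, from the rank identity and nonnegativity, is the paper's argument and is fine.
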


The entries are given by nonrecursive finite formulas whose coefficients
and summation ranges are independent of $p$. Section~\ref{gr:section}
and Appendix~\ref{gr:appendix} reconstruct the matrix from three counting
functions, while Section~\ref{gr:poly} evaluates those functions by polynomial
formulas and verifies the characteristic range and degree bound.
This polynomial dependence across characteristics does not follow from FFRT alone. For instance, the Veronese subring
$R=k[x,y]^{(3)}$ has FFRT for $p\ne3$, but the free rank of $F_*R$ is
$(p^2+2)/3$ when $p\equiv1\pmod3$ and $(p^2-1)/3$ when $p\equiv2\pmod3$,
so no single polynomial in $p$ gives it at every prime.

For the quadric cone $C_4$,
whose family consists of $C_4$ and its two spinor modules $S\{1\}$ and
$K\{1,1\}$, we find, up to graded shifts, that
\[
 F_*C_4\cong C_4^{\oplus p(13p^4+2)/15}\oplus
       S\{1\}^{\oplus p(p^4-1)/30}\oplus K\{1,1\}^{\oplus p(p^4-1)/30},
\]
so that for instance
$F_*C_4\cong C_4^{\oplus211}\oplus S\{1\}^{\oplus8}\oplus K\{1,1\}^{\oplus8}$
when $p=3$. The full matrix $A_4(p)$ is given in
Section~\ref{gr:example}. Since $C_4$ is the homogeneous coordinate ring of the smooth quadric
$\Gr(2,4)\subset\mathbb P^5$, these multiplicities are also determined by
Achinger's description of the higher Frobenius push-forwards of line and
spinor bundles on quadrics \cite{Achinger}, which extends Langer's
description of the first push-forward \cite{Langer}. Their expressions are in terms of
dimensions of graded pieces of explicit Artinian modules, and our
formulas agree with them.

The formulas underlying Theorem~\ref{thm:A} determine all Frobenius
multiplicities of $C_n$, with internal shifts counted together, and thus allow us to calculate $F$-signatures. While a closed form for $s_p(C_n)$ for arbitrary $n$ is not given in our work, we do give the explicit form for the limit $F$-signature $\lim_{p \to \infty} s_p(C_n)$. To present this result, we let $B_{2m}$ denote the Bernoulli numbers.

\begin{theorem}\label{thm:B}
Suppose $n\ge4$ and $p\ge\max\{n-2,3\}$.
\begin{enumerate}
\item For each $M\in\mathscr G_n$ there is a rational function
$v_M\in\mathbb Q(x)$ such that the multiplicity of $M$ in $F^e_*C_n$ is
$v_M(p)\,p^{e(2n-3)}+o(p^{e(2n-3)})$ as $e\to\infty$. In particular
$s_p(C_n)=v_{C_n}(p)$ is a single rational function of $p$ throughout
this range.
\item The $F$-signature is determined by a linear system in
$\binom{n-2}{2}$ unknowns, obtained from $\lfloor n/2\rfloor$
Riemann--Roch eigenrows of the multiplicity matrix.
\item As $p\to\infty$ through primes, we have
\[
 \lim_{p\to\infty}s_p(C_n)=
 -\frac{2}{(2n-3)!}\sum_{k=0}^{n-1}(-1)^k\binom{2n}{k}(n-k)^{2n-3}
 -(2^n-2)\frac{2^n(2^{2n-2}-1)(-1)^nB_{2n-2}}{(2n-2)!}.
\]
\end{enumerate}
\end{theorem}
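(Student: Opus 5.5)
Theorem~\ref{thm:B} follows from Theorem~\ref{thm:A} together with the standard Perron--Frobenius description of asymptotic multiplicities for FFRT rings \cite{SeibertFFRT,Yao}. The plan is as follows.

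\textbf{Part (1).} Since the family $\mathscr G_n$ is closed under Frobenius pushforward, the multiplicity vector of $F^e_*C_n$ is $A_n(p)^e$ applied to the basis vector of $C_n$. Taking ranks, the rank vector $r=(\rk M)_M$ satisfies $r^{T}A_n(p)=p^{2n-3}r^{T}$. Hence $p^{2n-3}$ is an eigenvalue, and it is the Perron root. The matrix $A_n(p)$ is nonnegative and, by \cite[Theorems~1.5--1.6]{RSVdB2}, $A_n(p)^2$ has all entries positive, so it is primitive. Therefore
\[
A_n(p)^e/p^{e(2n-3)}\to v\,r^{T},
\]
where $v$ is the right Perron eigenvector normalized by $r^{T}v=1$. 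This limit gives the error $o(p^{e(2n-3)})$. Since $A_n(p)$ has polynomial entries by Theorem~\ref{thm:A}, $v$ is the normalized kernel vector of $A_n(x)-x^{2n-3}I$ over $\mathbb Q(x)$. I would argue that this kernel is one-dimensional over $\mathbb Q(x)$: its dimension is at least one, and at each admissible prime it is at most the geometric multiplicity, which is one. Cramer's rule on a suitable maximal minor then gives $v_M\in\mathbb Q(x)$ that specialize correctly at every admissible $p$. One caveat must be handled: the chosen minor might vanish at finitely many primes. I would deal with this by choosing the minor adapted to the structured form of part (2), or by noting that different nonvanishing minors yield the same rational function.

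\textbf{Part (2).} I would use the structure of $A_n(p)$ from Section~\ref{gr:section}. The columns indexed by $S\{j\}$ and $K\{j,j\}$ are controlled by the Riemann--Roch weights. This produces $\lfloor n/2\rfloor$ known left eigenrows, namely linear functionals $w$ with $wA_n(p)=\lambda w$ for $\lambda\ne p^{2n-3}$, so that $wv=0$. These relations, combined with the residual matrices, eliminate all unknowns except about $\binom{n-2}{2}$ of them (the off-diagonal $K\{j,k\}$ modulo symmetry). The bookkeeping is to check that the count of remaining unknowns is exactly $\binom{n-2}{2}$.

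\textbf{Part (3).} I would study the leading behaviour in $p$. Write $v_{C_n}(p)=a_e/p^{e(2n-3)}$ in the limit. The $p$-degree bounds of Theorem~\ref{thm:A} show that, after dividing by $p^{2n-3}$, the matrix converges as $p\to\infty$. The free column becomes $(c,0,\dots)$, where $c$ is the leading coefficient of the first free multiplicity, and the other entries have lower relative order. I would then show that the limit Perron vector is concentrated so that $\lim s_p=c$, as the abstract indicates. Evaluating $c$ means computing the leading coefficient of the counting function: the number of lattice points in a dilated polytope describing free summands. Its volume splits into two pieces. The first is an Eulerian-type alternating sum $\sum(-1)^k\binom{2n}{k}(n-k)^{2n-3}$, which is the volume of a hypersimplex slice. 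The second is a correction from the $S\{j\}$ contributions, expressed through $\sum_j$ of powers, which yields $\tan$-series coefficients and hence $2^{2n-2}(2^{2n-2}-1)B_{2n-2}/(2n-2)!$ with the factor $(2^n-2)$.

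\textbf{Main obstacle.} I expect the hardest step to be this last volume and Bernoulli evaluation, together with justifying the interchange of the limits $e\to\infty$ and $p\to\infty$. For the interchange, I would first try a uniform spectral gap: I would show that the subdominant eigenvalues are $O(p^{2n-4})$ using the degree bound on the residual matrices.
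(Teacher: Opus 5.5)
\textbf{Part (3) has a genuine gap.} Your picture that the free column of $A_n(p)/p^{2n-3}$ tends to $(c,0,\dots,0)$ is wrong. The non-free entries of that column also have degree $2n-3$: for $n=4$ the limit column is $(13/15,1/30,1/30)$. Indeed, if those entries were of lower order, the rank identity $\rho A_n(p)=p^{2n-3}\rho$ would force $c=1$.

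More importantly, a spectral gap cannot justify interchanging the limits. Put $\bar A=\lim_pA_n(p)/p^{2n-3}$ and $\bar v=\lim_pv$. The claim is $\bar A_{00}=\bar v_0$. An eigenvalue bound only gives $\bar A=\bar v\rho+\bar N$ with $\bar N$ nilpotent, $\bar N\bar v=0$ and $\rho\bar N=0$, and such an $\bar N$ can still have $\bar N_{00}\neq0$. For instance, with $\rho=(1,1,1)$, the matrix
\[
 A=p^d\begin{pmatrix}1/2&1/2&0\\1/6&1/6&2/3\\1/3&1/3&1/3\end{pmatrix}
\]
is nonnegative, satisfies $\rho A=p^d\rho$, has eigenvalues $p^d,0,0$, and has $A^2$ entrywise positive. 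Yet its normalized Perron vector is $(1/3,1/3,1/3)$, while $A_{00}/p^d=1/2$.

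In the Riemann--Roch adapted coordinates of Section~\ref{gr:rr-reduction}, the matrix is block triangular with diagonal blocks $\operatorname{diag}(p^{2n-3-2i})$ and $H$. Your residual degree bound controls only $H$. The off-diagonal block $B_{JE}W_E^{-1}$ has entries of degree $2n-3$, no eigenvalue estimate says anything about it, and it is exactly where a nilpotent leading part like the one above can sit.

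The paper avoids spectral analysis entirely (Theorem~\ref{lim:comparison}). It spreads a homogeneous system of parameters $Q$ and a socle generator $u$ of the Gorenstein ring over $\mathbb Z[1/L]$ and puts $J=(Q,u)$. The socle formula gives $a_1=\ell(R_p/Q_p^{[p]})-\ell(R_p/J_p^{[p]})$ and $s_p=e_{\mathrm{HK}}(Q_p)-e_{\mathrm{HK}}(J_p)$. The $Q$-terms agree after normalization because $Q_p$ is generated by a regular sequence, and Smirnov's uniform estimate bounds the $J$-discrepancy by $C/p$.

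For the evaluation you also need the explicit formula
\[
a_1=C_{2n,p}(n(p-1))-C_{2n,p}(n(p-1)+2)-(2^n-2)H_{n,0}(p).
\]
The alternating sum is the leading coefficient of a second difference of central slice counts, which is of order $p^{2n-3}$ and is not a slice volume. The Bernoulli term is the leading coefficient of the fusion count $H_{n,0}$, coming from higher $G_1$-cohomology via cosecant power sums, not from $S\{j\}$ contributions.

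\textbf{Part (2) is missing its mechanism.} The eigenrows are not attached to the columns of $S\{j\}$ and $K\{j,j\}$. They are components of the Riemann--Roch map on the punctured cone $U$ over $\Gr(2,n)$. Since $\operatorname{CH}^*(U)_{\mathbb Q}=\mathbb Q[u]/(u^{\lfloor n/2\rfloor})$ with $u$ in codimension two, Frobenius compatibility $\tau_i(F_*\alpha)=p^i\tau_i(\alpha)$ yields exactly $\lfloor n/2\rfloor$ rows, with eigenvalues $p^{2n-3-2i}$. Their values come from $\ch\cdot\operatorname{td}$ of $\Sym^j\mathcal Q$ and of the resolution tails descended along the $\SL_2$-torsor. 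They are constant on duality orbits, and a Vandermonde-type minor on $S\{0\},\dots,S\{\lfloor n/2\rfloor-1\}$ proves their independence. The unknowns are then the remaining duality-orbit coordinates, of which there are $n-2+\lceil (n-3)^2/2\rceil-\lfloor n/2\rfloor=\binom{n-2}{2}$. Your guess, off-diagonal $K\{j,k\}$ modulo duality, gives $\binom{n-3}{2}$, which is already wrong for $n=4$.

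\textbf{Part (1)} agrees with the paper in substance (Theorem~\ref{thm:ffrt-spectrum} and Corollary~\ref{cor:ffrt-rational}). Your vanishing-minor caveat is settled at once by the identity $\adj(p^dI-A)=\chi_A'(p^d)\,v\rho$. It gives $v_M(x)=\adj(x^dI-A_n(x))_{M,C_n}/\chi'_{A_n(x)}(x^d)$, whose denominator is nonzero at every admissible prime because the Perron root is simple. Also, the theorems of Raedschelders, \v{S}penko and Van den Bergh you cite give positivity only of the $C_n$-column of $A_n(p)^2$, not of all of $A_n(p)^2$. Primitivity comes from Smith--Van den Bergh.
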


Part~(3) is the leading coefficient of the free multiplicity
$A_n(p)_{C_n,C_n}$, but this identification is not formal, as a polynomial matrix can have a normalized Perron vector whose limit is not read off from its leading terms. 
Since $C_n$ is Gorenstein, we obtain this identification from the
socle formula for splitting numbers and Smirnov's uniform
Hilbert--Kunz estimate; see Section~\ref{sig:section}. The key point is to show that $\displaystyle \lim_{p \to \infty} s_p(C_n)=\lim_{p \to \infty} \dfrac{a_1(C_n)}{p^{\dim C_n}}$.

For $n=4,\dots,8$ the limits are
\[
 \frac{13}{15},\quad\frac{97}{126},\quad\frac{15067}{22680},\quad
 \frac{2983}{5280},\quad\frac{11677453}{24324300}.
\]
For $C_4$, we calculate $a_e(C_4)$ explicitly, and we provide closed form formulae for $s_p(C_4)$ for every odd prime:
\begin{theorem}\label{thm:C}
For every prime $p\ge3$, we have
\[
 s_p(C_4)=\frac{13p^2+8}{5(3p^2+2)},
\]
and
\[
 a_e(C_4)=\frac{13p^2+8}{5(3p^2+2)}\,p^{5e}
 +\frac{2(p^2+1)}{5(3p^2+2)}\left(\frac{p(p^2+2)}3\right)^{e}.
\]
The $F$-signatures of $C_5$ and $C_6$ are rational functions of $p$ of
degrees $9/9$ and $30/30$, given in Section~\ref{ex:examples}. For
instance
\[
 s_p(C_6)=\frac{15067}{22680}-\frac{899}{48600\,p^4}+O(p^{-5}).
\]
\end{theorem}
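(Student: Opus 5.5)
The plan is to reduce Theorem~\ref{thm:C} to a computation with the $3\times3$ matrix $A_4(p)$ on the family $\{C_4,S\{1\},K\{1,1\}\}$. The first column is already displayed in the introduction:
\[
\bigl(p(13p^4+2)/15,\;p(p^4-1)/30,\;p(p^4-1)/30\bigr)^{T}.
\]
The other two columns come from the counting functions of Section~\ref{gr:section} specialized to $n=4$, or equivalently from Achinger's description of Frobenius pushforwards of spinor bundles on the quadric $\Gr(2,4)$. I would first write $A_4(p)$ explicitly. The symmetry $S\{1\}\leftrightarrow K\{1,1\}$ comes from the two spinor bundles and an outer automorphism of the quadric. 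It shows that the vector $(1,0,0)^T$ evolves inside the two-dimensional invariant subspace spanned by $(1,0,0)$ and $(0,1,1)$. So the iteration $F^e_*C_4$ is governed by a $2\times2$ matrix
\[
B=\begin{pmatrix} a & b\\ c & d\end{pmatrix},
\]
where $a=p(13p^4+2)/15$, $c=p(p^4-1)/30$, and $b,d$ record the number of copies of $C_4$ and of $S\{1\}\oplus K\{1,1\}$ in $F_*(S\{1\}\oplus K\{1,1\})$ (suitably normalized).

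Next I would compute the eigenvalues of $B$. Since $\rank F_*M=p^5\rank M$, the rank row vector $(1,\rank S\{1\}+\rank K\{1,1\})=(1,4)$ (both spinor modules have rank $2$) is a left eigenvector with eigenvalue $p^5$. The trace then gives the second eigenvalue $\lambda_2=a+d-p^5$. The target formula predicts
\[
\lambda_2=\frac{p(p^2+2)}{3},
\]
and this serves as a consistency check on $d$. Diagonalizing, $a_e(C_4)=(B^e)_{11}=\alpha p^{5e}+\beta\lambda_2^e$ with $\alpha+\beta=1$. The coefficient $\alpha$ is the product of the first coordinates of the normalized left and right Perron eigenvectors. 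Using the rank eigenvector $(1,4)$, we get $\alpha=x/(x+4y)$, where $(x,y)$ is the right $p^5$-eigenvector, so $(x,y)\propto(b,\,p^5-a)$. Simplifying should give
\[
\alpha=\frac{13p^2+8}{5(3p^2+2)},\qquad \beta=1-\alpha=\frac{2(p^2+1)}{5(3p^2+2)}.
\]
Since $\lambda_2<p^5$, dividing by $p^{5e}$ yields $s_p(C_4)=\alpha$.

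For $C_5$ and $C_6$ I would follow the same scheme using Theorem~\ref{thm:B}(2). Assemble $A_5(p)$ and $A_6(p)$ from the polynomial formulas of Section~\ref{gr:poly}. Solve the linear system for the Perron vector symbolically in $p$, normalized by ranks, and read off the rational function $v_{C_n}(p)$. Its degrees and the Laurent expansion at $p=\infty$ are then a mechanical (computer-assisted) check. The expansion should also agree with Theorem~\ref{thm:B}(3) for the constant term $15067/22680$.

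The main obstacle is obtaining the entries $b$ and $d$, that is, the multiplicities in $F_*S\{1\}$ and $F_*K\{1,1\}$, reliably. I would first try the Section~\ref{gr:section} counting formulas at $n=4$. I would cross-check them against Achinger's Artinian-module dimensions at $p=3,5$, and against the rank identity $b+4d=4p^5$, before carrying out the eigenvector algebra.
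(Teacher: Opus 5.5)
Your treatment of $C_4$ is essentially the paper's proof of Theorem~\ref{gr:example-thm}. You restrict $A_4(p)$ to the spinor-symmetric subspace, normalize with the rank row $(1,2,2)$, read $v_1/v_0=(p^5-a_1)/(2A_{C_4,S\{1\}})$ off the first row, and write $a_e$ as a combination of $p^{5e}$ and $(p(p^2+2)/3)^e$ fixed by $a_0=1$ and $a_1$.

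Your ``main obstacle'' is lighter than you expect. Once the swap symmetry is granted, the rank identity gives $d=p^5-b/4$ and $\lambda_2=a-b/4$. So the only new input is
\[
b/2=A_{C_4,S\{1\}}=a^{(1)}_0-14H_{4,1}=\frac{2p(p^2-1)(13p^2+8)}{15},
\]
which comes from \eqref{gr:SC}. The paper computes the $K\{1,1\}$ column directly and uses the symmetry only as a check. If you skip that column, you must justify the symmetry independently, for instance by the automorphism of the quadric exchanging the two spinor modules.

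For $C_5$ and $C_6$ your route genuinely differs from the paper's. You would assemble the full orbit matrix from Theorem~\ref{gr:matrix} and solve the Riemann--Roch-reduced system \eqref{gr:rr-residual}--\eqref{gr:rr-signature}. This is legitimate: the paper itself notes it is an exact formula of the same size $\binom{n-2}{2}$, and it needs no boundary-transfer lemma. The cost is that it requires every syzygy-source column, including the Jordan-block and Euler-induction reconstruction of the covariant rows, which is the most delicate part of the construction.

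The paper's proof of these two theorems takes a different input. It builds $H_n^+$ from the $E$-atoms alone via Lemma~\ref{gr:boundary-transfer}, and uses only the first $q$ covariant-source columns for the right-hand side $b_n$. It never assembles the full matrix; the full-matrix computation serves only as an independent check at a few primes.

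In either route you need a point your proposal leaves implicit. The reduced matrix $p^dI-H$ must be invertible at every admissible prime, which follows from simplicity of the Perron root. Only then does the symbolic Cramer fraction, after cancelling common factors such as the paper's $g_n$, specialize to $s_p(C_n)$ throughout the stated range.
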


The limit $13/15$ of $s_p(C_4)$ agrees with Shideler's formula for the
limit $F$-signature of a sum of six squares \cite{Shideler}; see also
\cite{CSTZ}.
Theorem~6.4 of \cite{CSTZ}, applied to six squares, already gives a
free-rank sequence of the form $s_p(C_4)p^{5e}+(1-s_p(C_4))B^e$, with integers
$B,C$ such that $s_p(C_4)=-C/(p^5-B)$. There $B$ is the coefficient of
$\lambda_{(p-1)/2}$ in $\lambda_{(p-1)/2}^5$ in the Han--Monsky
representation ring, and the authors note that $B$ and $C$ are hard to
determine explicitly when $B>1$. Theorem~\ref{thm:C} identifies
\[
 B=\frac{p(p^2+2)}3,\qquad C=-\frac{p(p^2-1)(13p^2+8)}{15}.
\]
The signature $s_p(C_4)$ is also
determined in principle by Achinger's description, through the Perron
vector of the resulting $3\times3$ matrix, but we have not found this
closed expression for fixed $p$ in the literature. For $n\ge5$ the ring $C_n$ is not a hypersurface, and to our
knowledge no $F$-signature of $C_n$ with $n\ge5$ was previously known.

\subsection*{The method}\label{subseq:method}
As noted above, Theorem~\ref{thm:A} does not follow from FFRT alone.
Its proof takes as input the structural results of \cite{RSVdB2}, in particular, the
Frobenius-kernel invariants of covariant modules
\cite[Proposition~5.1, Lemma~5.4]{RSVdB2}, the decomposition theorem
\cite[Theorem~4.3, Proposition~9.1]{RSVdB2}, the single surviving class
of their spectral sequence \cite[Lemma~7.8]{RSVdB2}, and induction from
Borel invariants \cite[Proposition~8.2]{RSVdB2}. It adds two things: an
exact count of each multiplicity, and a proof that these counts are
fixed finite sums of polynomials in $p$. Write $Q$ for the
truncated polynomial ring $\Sym(k^n\otimes k^2)/(x_i^p,y_i^p)$. There are
three mechanisms. Here a \emph{source} is the module being pushed
forward, and a \emph{target} is a summand whose multiplicity is counted.
\begin{enumerate}
\item For a covariant source $S\{j\}$, the covariant targets are read
off from the tilting character of $T(j)\otimes Q$. This character
determines the relevant invariants since the module is tilting. The
syzygy targets are fusion counts in the level-$(p-2)$ fusion ring of
$\SL_2$.
\item For a syzygy source $K\{j,k\}$, the syzygy targets are dimension
counts on the spectral sequence of \cite{RSVdB2}. After fixing a tuple and
a torus weight, its $E_3$ page is concentrated in one column with at most
one surviving class, so no rank of a map is ever needed.
\item The covariant targets of a syzygy source require a different
calculation. The natural module $K_{jk}\otimes_SQ$ is not known to be tilting,
so we cannot determine the count from its character. Instead,
\cite[Proposition~8.2]{RSVdB2} expresses the relevant invariants as
being induced from Borel invariants. We reconstruct the character of those
Borel invariants from Tate cohomology, by Jordan-block arithmetic over
the distribution algebra $k[f]/(f^p)$ of the first Frobenius kernel of
the unipotent radical.
\end{enumerate}
In each case the multiplicities are finite sums of three counting
functions (a weight multiplicity in $Q$, a fusion count, and a bounded
composition count) evaluated at arguments affine in $p$. The last step
is to show that these are polynomials in $p$ throughout the range. For
the fusion counts this uses a Verlinde-type evaluation in terms of sums
of powers of cosecants, and for the others the vanishing of binomial
coefficients $\binom{x+r}{r}$ at $-r\le x\le-1$.

Part~(2) of Theorem~\ref{thm:B} comes from Riemann--Roch on the punctured
cone over $\Gr(2,n)$. Its rational Chow ring is $\mathbb Q[u]/(u^{\lfloor
n/2\rfloor})$, and Frobenius acts on the component of codimension $2i$
by $p^{2n-3-2i}$, giving $\lfloor n/2\rfloor$ eigenrows of the
multiplicity matrix that are independent of $p$.

The same geometry leads to the full characteristic polynomial
(Theorem~\ref{gr:characteristic-polynomial}), with the Riemann--Roch factor
being complemented by two residual determinants, obtained by splitting
formal resolution relations under duality. Both residual matrices are
constructed from the bounded-composition counts alone and have entries
of degree at most $n-1$ in $p$. Thus their construction avoids first
assembling the full multiplicity matrix, while retaining the entire
characteristic polynomial.

The general theorems are proved by hand, while the rational functions for
$C_5$ and $C_6$ are computer-assisted, using an accompanying program for some necessary linear algebra calculations, with no interpolation. The program is available as an ancillary file.

In the sequel we apply these results to the generic determinantal rings
$k[X_{a\times b}]/I_3(X)$, which are weight-zero parts of the rings $C_n$
for a one-dimensional torus. In the corresponding characteristic
range, FFRT descends from the Grassmannian ring by
\cite[Proposition~3.1.6]{SVdB}. The sequel then determines the minimal
summand families and their multiplicities, and computes $F$-signatures,
including that of the $3\times3$ generic determinantal hypersurface.

We now describe the structure of this work.
Section~\ref{sec:ffrt} recalls the FFRT formalism.
Section~\ref{gr:section} derives the multiplicity formulas, with
$\Gr(2,4)$ as a running example, and Section~\ref{gr:poly} evaluates the
counting functions by polynomial formulas, completing the proof of
Theorem~\ref{thm:A}. Section~\ref{sig:section} proves the
characteristic-limit formula in Theorem~\ref{thm:B}(3), while
Section~\ref{gr:rr-reduction} gives the Riemann--Roch reduction of
Theorem~\ref{thm:B}(2) and the characteristic-polynomial construction
of Theorem~\ref{gr:characteristic-polynomial}. Explicit signatures are given in Section~\ref{ex:examples}. Finally, the boundary
specialization of the reconstruction formulas appears in
Appendix~\ref{gr:appendix}.

\subsection*{Preliminaries and conventions}\label{sec:preliminaries}
All modules are finitely generated unless otherwise stated. The grading
on $F_*^eM$ is divided by $p^e$. 
Frobenius summands are considered up
to internal shift, and a multiplicity counts all shifts of that type.
We retain shifts in the resolutions defining the modules. The number
$a_e(R)$ counts all graded free summands of $F_*^eR$, with their internal
shifts counted together, while $\rk$ denotes rank over the fraction field. Throughout, we let $n\ge4$, let $p\ge\max\{n-2,3\}$, and let
\[
 C_n=k[\operatorname{Gr}(2,n)]
     =\operatorname{Sym}(k^n\otimes k^2)^{\operatorname{SL}_2}
\]
be the Pl\"ucker coordinate ring, which has dimension $d_n=2n-3$. The coordinate ring $C_n$
has its standard Pl\"ucker grading, while in the invariant presentation below
a Pl\"ucker generator has polynomial degree two. These gradings differ
only by this normalization. Displayed shifts in the equivariant
constructions and their invariant modules use the polynomial grading.

For the representation-theoretic calculations, put $V=k^2$, $F=k^n$,
$G=\SL(V)$, and $S=\Sym(F\otimes V)=k[x_i,y_i]_{i=1}^n$.
Let $B$ be the lower triangular Borel, $\mathbb T$ its diagonal torus,
and $U$ its unipotent radical. 
A subscript $1$ denotes the kernel of
the first Frobenius morphism, as in $G_1,B_1,U_1$.
We identify torus weights with integer multiples of the fundamental
weight $\omega$, with $x_i$ of weight $+\omega$ and $y_i$ of weight
$-\omega$. A one-dimensional $B$-module of weight $j\omega$ is denoted
by $(j\omega)$, while $\Ind_B^G$ denotes rational induction.

The symbols $L(j)$, $\nabla(j)$ and $T(j)$ denote the simple, induced
and indecomposable tilting $G$-modules of highest weight $j\omega$.
We note that for $0\le j\le p-1$, all three agree with $\Sym^jV$.
The character $\ch M$ records torus-weight dimensions, and
$\chi_j=\ch\nabla(j)=z^j+z^{j-2}+\cdots+z^{-j}$.
The superscript $[1]$ denotes Frobenius twist. 
We use ordinary duals: $M^\vee=\Hom_A(M,A)$ for a module over a ring
$A$, and $W^*=\Hom_k(W,k)$ for a representation.

Write
\[
 S_+=S/(y_1,\ldots,y_n),\qquad
 Q=S/S^p_{>0}S=S/(x_1^p,y_1^p,\ldots,x_n^p,y_n^p).
\]
Here $S^p_{>0}$ is the positive-degree ideal of the subring of
$p$th powers. For $0\le j\le n-3$, the covariant modules are
$S\{j\}=(T(j)\otimes_kS)^G$, with $S\{0\}=C_n=S^G$.
For $1\le j\le n-3$, put
$M_j=\Ind_B^G((j\omega)\otimes_kS_+)$.
In our characteristic range, the equivariant graded free resolution
\cite[Remark~3.4 and (3.5)]{RSVdB2} is
\begin{equation}\label{eq:defining-resolution}
 0\longrightarrow E_{j,n-1}\longrightarrow\cdots
 \longrightarrow E_{j,0}\longrightarrow M_j\longrightarrow0,
\end{equation}
where
\[
 E_{j,a}=
 \begin{cases}
  T(j-a)\otimes_k\bigwedge^aF\otimes_kS(-a),&0\le a\le j,\\
  T(a-j-1)\otimes_k\bigwedge^{a+1}F\otimes_kS(-a-1),&j<a\le n-1.
 \end{cases}
\]
Using this resolution, define
\[
 K_{jk}=\Omega^{k+1}M_j
       =\ker(E_{j,k}\longrightarrow E_{j,k-1}) \qquad \text{ and } \qquad
 K\{j,k\}=(K_{jk})^G,\quad \text{ for } 1\le j,k\le n-3.
\]
Thus $K_{jk}$ is an equivariant $S$-module and $K\{j,k\}$ is a
$C_n$-module. The choice of resolution is part of the definition.
We write $H^i$ for rational group-scheme cohomology and $\widehat H^i$
for Tate cohomology. Further notation from \cite{RSVdB2} is introduced
where it is used.

\section{The FFRT matrix and its Perron eigenspace}\label{sec:ffrt}
We recall the classical FFRT method for encoding Frobenius as a matrix (see
\cite[Proposition~3.3.1]{SVdB} and \cite[Section~3]{Yao}).

\begin{theorem}\label{thm:ffrt-spectrum}
Let $R$ be a strongly F-regular F-finite positively graded domain of dimension $d$ with
perfect degree-zero field and finite F-representation type.
Let $M_0=R,M_1,\ldots,M_{h-1}$ represent the distinct indecomposable
graded summands occurring in some $F_*^eR$, up to internal shifts. Write
\[
 F_*M_j\cong\bigoplus_iM_i^{\oplus A_{ij}}\qquad \text{ and } \quad
 \rho=(\rk M_0,\ldots,\rk M_{h-1}),
\]
with graded shifts suppressed. Then $\rho A=p^d\rho$, the eigenvalue $p^d$
is simple, and its positive right eigenvector $v$, normalized by
$\rho v=1$, satisfies
\[
 \lim_{e\to\infty}p^{-de}A^e=v\rho \qquad \text{ with } \qquad s(R)=v_0.
\]
In particular, with $\chi_A(t)=\det(tI-A)$, we have
\begin{equation}\label{eq:signature-adjugate}
 s(R)=\frac{\adj(p^dI-A)_{00}}{\chi_A'(p^d)}.
\end{equation}
\end{theorem}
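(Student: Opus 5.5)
The plan is to derive everything from a rank count, the Perron--Frobenius theorem, and the existence of the $F$-signature. First, $\rho A=p^d\rho$ follows by taking ranks in $F_*M_j\cong\bigoplus_iM_i^{\oplus A_{ij}}$. Since $R$ is an F-finite domain of dimension $d$ with perfect residue field $R_0$, we have $\rk_R F_*M=p^d\rk M$ for every finitely generated torsion-free $M$. Applying this to $M_j$ and comparing ranks on both sides gives $p^d\rk M_j=\sum_i A_{ij}\rk M_i$. This is exactly the $j$th entry of $\rho A=p^d\rho$.

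Next I will show that $A$ is a nonnegative irreducible matrix. By definition every $M_i$ occurs in some $F^{e}_*R$, so $(A^{e})_{i0}>0$ for some $e$. Strong F-regularity gives the reverse connection: for each $i$ and all $e\gg0$, $R$ is a direct summand of $F^e_*M_i$. To see this, take a nonzero map $M_i\to R$ and compose with a Frobenius splitting of $R\to F^e_*R$ sending the image element to $1$. Hence $(A^{e})_{0i}>0$. So every index communicates with $0$, and $A$ is irreducible. Since $A_{00}\ge1$ (the free summand $F_*R\supseteq R$), $A$ is also aperiodic, hence primitive.

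The Perron--Frobenius theorem then applies. Because $\rho$ is a strictly positive left eigenvector, its eigenvalue $p^d$ equals the spectral radius. That eigenvalue is simple, every other eigenvalue has strictly smaller modulus, and there is a positive right eigenvector $v$, which we normalize by $\rho v=1$. Primitivity gives $p^{-de}A^e\to v\rho$. Reading the $(0,0)$ entry, $a_e(R)=(A^e)_{00}$ satisfies $p^{-de}a_e(R)\to v_0\rho_0=v_0$ since $\rk R=1$. Here I use that $a_e$ counts free summands with all shifts together, so it is the iterated multiplicity, and Krull--Schmidt holds for graded modules over a positively graded ring with $R_0$ a field. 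Therefore $s(R)=v_0$.

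For the adjugate formula, set $\lambda=p^d$. Since $\lambda$ is a simple eigenvalue, $\adj(\lambda I-A)$ has rank one and satisfies $(\lambda I-A)\adj(\lambda I-A)=\adj(\lambda I-A)(\lambda I-A)=0$. Its columns are therefore multiples of $v$ and its rows multiples of $\rho$, so $\adj(\lambda I-A)=c\,v\rho$. To find $c$, use Jacobi's formula $\chi_A'(\lambda)=\operatorname{tr}\adj(\lambda I-A)=c\,\rho v=c$. Thus $\adj(\lambda I-A)_{00}=\chi_A'(\lambda)v_0\rho_0$, which gives (eq:signature-adjugate), with $\chi_A'(\lambda)\ne0$ by simplicity.

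The main obstacle is the irreducibility step, namely showing that $R$ is a summand of $F^e_*M_i$ for every $i$. There the strong F-regularity hypothesis must be used in its graded form, and the splitting must be made compatible with the graded Krull--Schmidt decomposition.
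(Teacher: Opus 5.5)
Your proposal is correct and follows the same route as the paper: the rank identity, primitivity of $A$, the Perron convergence $p^{-de}A^e\to v\rho$, reading off the $(0,0)$ entry as $a_e(R)$, and the rank-one adjugate at the simple root $p^d$. The only difference is that you fill in two steps the paper does not prove: you derive primitivity directly from strong F-regularity (each $F^e_*M_i$ has $R$ as a summand for some $e$, and $A_{00}\ge1$), where the paper cites Smith--Van den Bergh's Proposition~3.3.1, and you derive $\adj(p^dI-A)=\chi_A'(p^d)\,v\rho$ from Jacobi's formula, which the paper simply asserts.
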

\begin{proof}
The family is Frobenius-closed because each $M_j$ occurs in an iterate
of $R$. Smith--Van den Bergh's result and its proof show that $A$ is
primitive and give the Perron convergence
\cite[Proposition~3.3.1]{SVdB}. Taking ranks gives
$\rho A=p^d\rho$, so this is the Perron eigenvalue and the stated
normalization identifies the limit as $v\rho$.
Since $(A^e)_{00}$ is the free multiplicity of $F_*^eR$ and $\rho_0=1$,
its normalized limit is $v_0=s(R)$. Finally, the adjugate at the simple
root $p^d$ is
$\adj(p^dI-A)=\chi_A'(p^d)v\rho$, so its $(0,0)$-entry gives the formula.
\end{proof}

\begin{corollary}\label{cor:ffrt-rational}
Suppose the family and rank vector are fixed and $A(p)$ is the
evaluation of a matrix over $\mathbb Q[x]$ at every prime in an
unbounded range. Then $s(R_p)$ is a single rational function of $p$
on that range, with a finite rational limit as $p\to\infty$.
Every matrix entry has degree at most $d$.
\end{corollary}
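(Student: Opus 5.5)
The plan is to use the adjugate formula \eqref{eq:signature-adjugate} from Theorem~\ref{thm:ffrt-spectrum}, applied prime by prime to $A(p)$. Let $\mathcal A(x)\in M_h(\mathbb Q[x])$ be the polynomial matrix and put $N(x)=\adj(x^dI-\mathcal A(x))_{00}$ and $D(x)=\chi'_{\mathcal A(x)}(t)|_{t=x^d}$, where $\chi_{\mathcal A(x)}(t)=\det(tI-\mathcal A(x))\in\mathbb Q[x][t]$. Both $N$ and $D$ lie in $\mathbb Q[x]$, and their formation commutes with evaluation at $x=p$. Hence for every prime $p$ in the range we get $s(R_p)=N(p)/D(p)$. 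Theorem~\ref{thm:ffrt-spectrum} also says $p^d$ is a simple root of $\chi_{A(p)}$, so $D(p)\neq0$ for infinitely many $x=p$. Therefore $D$ is not the zero polynomial, and $N/D$ is a well-defined element of $\mathbb Q(x)$ agreeing with $s(R_p)$ on the whole range.

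The degree bound comes next, because it is needed for the limit. Fix an entry $A_{ij}(p)$. Comparing ranks in $F_*M_j\cong\bigoplus_iM_i^{\oplus A_{ij}}$ gives $\sum_i A_{ij}(p)\rk M_i=\rk F_*M_j=p^d\rk M_j$. All terms are nonnegative and $\rk M_i\ge1$, so
\[
0\le A_{ij}(p)\le p^d\rk M_j
\]
for infinitely many $p$. A polynomial bounded by $Cx^d$ at infinitely many arbitrarily large points has degree at most $d$, so every entry of $\mathcal A$ has degree at most $d$.

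For the finite limit, I would avoid leading-term cancellation in $N/D$ altogether. I would use instead that $0\le s(R_p)\le1$, which holds since $a_e\le\rk F^e_*R=p^{de}$. A rational function bounded on an unbounded set of reals cannot have numerator degree exceeding denominator degree, because otherwise $|N/D|\to\infty$. So $\deg N\le\deg D$, and $N(p)/D(p)$ tends to a finite rational number, namely the ratio of the degree-$\deg D$ coefficients. That is zero if $\deg N<\deg D$.

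The only delicate point is ensuring that $D$ is a nonzero polynomial rather than merely nonzero at individual primes. As argued, this follows because the simplicity of the Perron root holds at every prime of an infinite set. Everything else is formal polynomial identity manipulation.
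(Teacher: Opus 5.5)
Your proof is correct and follows essentially the same route as the paper: the adjugate formula \eqref{eq:signature-adjugate} combined with $0<s(R_p)\le 1$ gives a single rational function with $\deg N\le\deg D$, hence a finite rational limit. The degree bound comes from the rank identity $\rho A=p^d\rho$ with nonnegative entries, which the paper phrases as nonzero entries having positive leading coefficient. One small remark: despite your framing, your limit argument never uses the degree bound; it relies only on the boundedness of $s(R_p)$.
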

\begin{proof}
Use \eqref{eq:signature-adjugate} and $0<s(R_p)\le1$. Nonzero
entries of $A(x)$ have positive leading coefficient, so the rank
identity $\rho A(x)=x^d\rho$ bounds their degrees.
\end{proof}

The Pl\"ucker rings considered below are Gorenstein. In this case we
use $M^\vee=\Hom_R(M,R)$ as noted in Section \ref{sec:preliminaries}. Finite Frobenius duality identifies
$(F_*M)^\vee$ with $F_*(M^\vee)$ up to an internal shift, so
in particular, duality permutes the Frobenius summand types, commutes with $A$,
and fixes its normalized Perron vector. Moreover, free summands are invariant under this duality.
Let $H$ record total multiplicities on duality orbits. That is, for each source
representative, sum the entries of $A$ over a target orbit. Its
normalized Perron vector records the total limiting multiplicity on
each orbit, and
\begin{equation}\label{eq:duality-quotient}
 s(R)=\frac{\adj(p^dI-H)_{00}}{\chi_H'(p^d)}.
\end{equation}
For an arbitrary source, its multiplicity column need not be constant
on target duality orbits. The multiplicities in $F_*^eR$ are invariant
under duality, since the source $R$ is self-dual, and the limiting
vector is invariant as well. Polynomial dependence alone does not
identify the characteristic limit with the leading coefficient of
$A_{00}$. That identification follows from the Gorenstein socle formula
and uniform Hilbert--Kunz convergence in Section~\ref{sig:section}.

\section{Frobenius pushforward on the Pl\"ucker ring}
\label{gr:section}

Throughout this section internal grading shifts are counted but suppressed.

As identified in \cite{RSVdB1,RSVdB2}, the family of Frobenius summand types over $C_n$ is
\[
 \mathscr G_n=\{S\{j\}:0\le j\le n-3\}
     \cup\{K\{j,k\}:1\le j,k\le n-3\} \qquad \text{with} \qquad S\{0\}=C_n.
\]
By \cite[Theorems~1.5--1.6]{RSVdB2}, the types in $F_*C_n$ are
the $S\{j\}$ and $K\{j,j\}$, while every member of $\mathscr G_n$
occurs in $F_*^eC_n$ for every $e\ge2$. There are therefore $2n-5$
types at the first iterate and $(n-2)+(n-3)^2$ thereafter.
Recall $A_n(p)_{M',M}$ denotes the multiplicity of the target $M'$ in the
Frobenius pushforward of the source $M$.

The ranks of $K\{j,k\}$ equal the $S$-ranks of $K_{jk}$. Indeed, on the
full-rank matrix locus, taking invariants is descent along an
$\SL_2$-torsor over the punctured cone. Taking alternating ranks in
the tail of \eqref{eq:defining-resolution} ending in $K_{jk}$ gives
\begin{equation}\label{det:ranks}
 \rk S\{j\}=j+1 \qquad \text{and} \qquad
 \rk K\{j,k\}=
  \begin{cases}
   (j-k+1)\binom{n-2}{k}+(j-k)\binom{n-2}{k-1},&k\le j\\
   (k-j+1)\binom{n-2}{k}+(k-j)\binom{n-2}{k+1},&k\ge j.
  \end{cases}
\end{equation}
Note the two expressions for $\rank K\{j,k\}$ agree when $j=k$. 
The goal of this section will be to compute $A_n(p)$.
\subsection{Construction of the matrix}\label{gr:mechanisms}
We construct $A_n(p)$ one column at a time, using the ordered family
\[
 S\{0\},\ldots,S\{n-3\},\qquad
 K\{1,1\},\ldots,K\{1,n-3\},K\{2,1\},\ldots,K\{n-3,n-3\},
\]
with the pairs ordered lexicographically as above. The following recipe determines
every entry, while the subsequent subsections justify its steps.

\begin{enumerate}
\item \emph{Evaluate the counting functions.}
The construction uses three functions: $W_n(w)$ counts weights in the
truncated polynomial ring $Q$, $H_{n,q}$ is a fusion multiplicity, and
$E_{j;q,e}$ counts bounded compositions of $(q+e)p-j-2e$ into $n$
parts. Their definitions are given in \eqref{gr:weight}, \eqref{gr:H} and
\eqref{gr:E}. To compute symbolically in $p$, use the binomial formula
\eqref{gr:C} for $W$ and $E$, and the finite Bernoulli-number formula
\eqref{gr:H} for $H$, with coefficients given by \eqref{gr:b} and
\eqref{gr:cosecant-coefficients}. All ranges depend only on $n$.

\item \emph{Fill a covariant-source column $S\{j\}$.}
Form the Weyl coefficients $a^{(j)}_\lambda$ from $W_n$ using
\eqref{gr:weyl-coefficients}. Equation~\eqref{gr:SC} gives the free
row, and the finite alternating sum \eqref{gr:SS} gives every other
covariant row. Equation~\eqref{gr:SK} places $H_{n,j}$ in every
diagonal syzygy row $K\{r,r\}$ and zero in every off-diagonal row.
This completes the column.

\item \emph{Fill a syzygy-source column $K\{j,k\}$ with $k\le j$.}
First compute the generic term $B_{j,k}$ from the fusion counts using
\eqref{gr:B}. Combine it with the $E$-counts in \eqref{gr:T} and
\eqref{gr:O} to obtain the Tate coefficients $\mathcal T$ and the
ordinary-cohomology coefficients $\mathcal O$. The syzygy rows are
already determined:
\[
 A_{K\{r,h\},K\{j,k\}}=\mathcal O_{j,k;h,r}.
\]
To obtain the covariant rows, form $P$ from $\mathcal T$ and $W_n$
using \eqref{gr:P}, and then form $Z$ using the finite sum
\eqref{gr:Z}. The difference $Z_{j,k;l}-Z_{j,k;-l-2}$ is the
coefficient after Euler induction, before removing the syzygy
summands. Subtract their resolution characters, with multiplicities
$\mathcal O$, exactly as in \eqref{gr:KS}. The result is
$A_{S\{l\},K\{j,k\}}$ for each $0\le l\le n-3$.

\item \emph{Fill the remaining columns by duality.}
If $k>j$, the dual source is $K\{n-2-j,n-2-k\}$, whose second
index is smaller than its first. Its column was computed in the
previous step. Equation~\eqref{gr:duality} then gives the desired column
by fixing the covariant rows and replacing each syzygy row
$K\{r,h\}$ by $K\{n-2-r,n-2-h\}$.
\end{enumerate}

For covariant sources, tilting characters determine the covariant
part, while fusion counts determine the syzygy part
\cite[Proposition~5.1]{RSVdB2}. For syzygy sources, the
$\mathcal O$-counts come from a spectral sequence with a single
surviving class. The covariant part requires the additional
reconstruction through $P$ and $Z$, as the character of
$K_{jk}\otimes_SQ$ alone is not known to determine its decomposition.
Instead, Borel invariants and Euler induction recover a character
from which the known syzygy contributions can be subtracted
(\S\ref{gr:ks}). Example~\ref{gr:example} serves as an illustration of the matrix construction, and carries out both kinds of
column calculations for $n=4$.

\subsection{Covariant sources}\label{gr:cov}
Fix a source $S\{j\}$. We first count weights in $T(j)\otimes Q$,
convert them into Weyl-character coefficients, and use finite alternating
sums to recover the covariant multiplicities. The two-term tilting
characters below explain these sums. A cohomological correction is
needed at the free target; fusion counts give the diagonal syzygy targets.
Write $C_{r,p}(M)=[x^M](1+x+\cdots+x^{p-1})^r$ and
$W_n(w)=C_{2n,p}(n(p-1)+w)$ for the multiplicity of the
$\SL_2$-weight $w$ in $Q=S/S^p_{>0}S$. Reflecting the $y$-exponents
converts the weight into a total degree. Let
$H_{n,q}(p)=[V_q](\sum_{i=0}^{p-2}V_i)^n$ be the fusion count in the
level-$(p-2)$ $\SL_2$ fusion ring, with the labels $p-1$ and $-1$
interpreted as zero. For a covariant source,
\cite[Proposition~5.1 and Lemma~5.4]{RSVdB2} reduce the calculation to
the tilting character of $T(j)\otimes Q$, which determines the
tilt-free part because $T(j)\otimes Q$ is tilting, and to fusion counts
for the higher $G_1$-cohomology, which give the $K$-targets.

For $0\le j\le n-3$, all highest weights in
$T(j)\otimes Q$ are at most $n(p-1)+j\le np-3$.
First suppose $p\ge n-1$. In the SL2 tilting tensor-product formula
(see \cite{DotyHenke}), we have
$T(c)=T(c_1)\otimes T(d)^{[1]}$, with
\(p-1\le c_1\le2p-2\). The remaining digit satisfies
\(0\le d\le n-2\le p-1\), and thus $T(d)=\nabla(d)$.
The only highest weights relevant to residues $0$ and $-2$ modulo $p$
belong to
\[
 \Lambda_n=\{0,p-2\}\cup\{ip:1\le i\le n-1\}
                    \cup\{ip-2:2\le i\le n-1\}.
\]
Their characters are
\[
 \operatorname{ch}T(ip)=\chi_{ip}+\chi_{ip-2} \qquad \text{and} \qquad
 \operatorname{ch}T(ip-2)=\chi_{ip-2}+\chi_{(i-2)p},
\]
apart from the simple weights $0,p-2$.

A Weyl-character coefficient is the difference of consecutive weight
multiplicities (whose weights differ by two). For $T(j)\otimes Q$
this difference telescopes to
\begin{equation}\label{gr:weyl-coefficients}
 a^{(j)}_\lambda=W_n(\lambda-j)-W_n(\lambda+j+2),
 \qquad \lambda\in\Lambda_n.
\end{equation}
The entries of a covariant-source column are the following finite sums:
\begin{align}
 A_{S\{0\},S\{j\}}
 &=a^{(j)}_0-(2^n-2)H_{n,j},\label{gr:SC}\\
 A_{S\{l\},S\{j\}}
 &=\sum_{r=0}^{\lfloor(n-l-3)/2\rfloor}
 \bigl(a^{(j)}_{(l+2+2r)p-2}-a^{(j)}_{(l+2+2r)p}\bigr)
 \quad(1\le l\le n-3),\label{gr:SS}\\
 A_{K\{r,h\},S\{j\}}
 &=[r=h]H_{n,j}.\label{gr:SK}
\end{align}
At a Steinberg source $j=p-1$ all $H_{n,j}$ terms are zero.

To justify these formulas, let $U^{(j)}_\lambda$ be the coefficients
obtained by expressing the Weyl character in the two-term characters
above, with coefficients beyond the largest index set to zero. For
$p\ge n-1$ these are the actual tilting multiplicities. Comparing
Weyl coefficients gives
\[
 U^{(j)}_0+U^{(j)}_{2p-2}=a^{(j)}_0 \qquad \text{and} \qquad
 U^{(j)}_{ip-2}=a^{(j)}_{ip-2}-a^{(j)}_{ip}
                  +U^{(j)}_{(i+2)p-2}\quad(2\le i\le n-1).
\]
Iterating the second identity through the finite range gives exactly
the sum in \eqref{gr:SS}, which equals $U^{(j)}_{(l+2)p-2}$.
The covariant decomposition of \cite[Proposition~5.1 and Lemma~5.4]{RSVdB2}
then gives \eqref{gr:SC}--\eqref{gr:SK}.
Lemma~\ref{gr:boundary} verifies the same formulas at $p=n-2$.

\subsection{Syzygy targets}\label{gr:kk}
Recall that $K_{jk}=\Omega^{k+1}M_j$ is defined using the
$G$-equivariant resolution \eqref{eq:defining-resolution}. We also put
$L_{jk}=\Omega^{k+1}((j\omega)\otimes S_+)$, using the
$B$-equivariant twisted Koszul resolution of $(j\omega)\otimes S_+$
\cite[(3.6)]{RSVdB2}. Its term in homological degree $m$ is
\[
 ((j-m)\omega)\otimes_k\bigwedge^mF\otimes_k S(-m),
 \qquad 0\le m\le n.
\]
Assume first that $k\le j$.
After fixing a tuple and a torus weight, the spectral sequence
\cite[(6.17), Lemma~7.8]{RSVdB2} is concentrated in one column at
$E_3$. Its $E_1$ dimensions are fusion counts, and the full complex
has at most one surviving class, in position $-q$, at tuple degree
$(q+e)p-j-2e$ and row parity $r\equiv q+e\pmod2$.
Thus a brutal truncation is computed by alternating the dimensions
and correcting for the position of this class. The resulting
ordinary-cohomology multiplicities count the K-targets by
\cite[Proposition~7.6, Corollary~8.4, and Theorem~4.3]{RSVdB2}.
Appendix~\ref{gr:appendix} writes out these
alternating sums and indicators (see \eqref{gr:B}--\eqref{gr:O}) and the
resulting entries \eqref{gr:KK}.

For a source with $k>j$ we use ordinary duality. Since $C_n$ is
Gorenstein, $F_*\Hom(M,C_n)\cong\Hom(F_*M,C_n)$ up to internal shift. The
equivalence between reflexive $C_n$-modules and reflexive $(G,S)$-modules
\cite[(3.1)]{RSVdB2}
commutes with duals:
$\Hom_{C_n}(M^G,C_n)\cong\Hom_S(M,S)^G$. For the covariants this gives
$S\{j\}^\vee\cong((T(j)\otimes_kS)^\vee)^G=(T(j)^*\otimes_kS)^G\cong S\{j\}$,
since $T(j)^*=(S^jV)^*\cong D^jV=S^jV$ for $j\le n-3$
\cite[Remark~3.4]{RSVdB2}. For the syzygy modules,
\cite[Proposition~3.5(1)]{RSVdB2} gives
$K_{jk}^\vee\cong K_{n-j-2,n-k-2}(n)\otimes_k\wedge^nF^*$ as
$(G,S)$-modules, and $\wedge^nF^*$ is a trivial $G$-representation, so
taking invariants gives $K\{j,k\}^\vee\cong K\{n-2-j,n-2-k\}(n)$. 
It follows that
\begin{equation}\label{gr:duality}
 A_{Y,X}=A_{Y^{\vee},X^{\vee}},
\end{equation}
which reduces to a source whose second $K$-index is at most its first.

\subsection{Covariant targets of syzygy sources}\label{gr:ks}\label{det:borel-reconstruction}
The covariant targets of a syzygy source need the character of an
invariant module rather than of $K_{jk}\otimes_SQ$, which is not known
to be tilting, and is supplied by the following lemma. Here $z$ records the SL2 weight, while all other gradings are retained
coefficientwise.

\Needspace{8\baselineskip}
\begin{lemma}[Jordan-block reconstruction]\label{det:jordan}
Let $M$ be a finite-dimensional representation of $\mathbb T\ltimes U_1$,
where $U_1$ is the first Frobenius kernel of the unipotent radical of $B$. Write
$H_i(z)=\operatorname{ch}\widehat H^i(U_1,M)$, including its
cohomological torus weight. Then
\begin{equation}\label{det:jordan-invariants}
 \operatorname{ch}M^{B_1}=
 \left[H_0(z)+
 \frac{(1-z^2)\operatorname{ch}M+H_1(z)-H_0(z)}{1-z^{2p}}
 \right]_{p\mathbb Z},
\end{equation}
where the bracket retains torus weights divisible by $p$.
\end{lemma}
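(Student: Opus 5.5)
The plan is to reduce the statement to a computation over the distribution algebra of $U_1$ and then check it one Jordan block at a time. Since $B$ is lower triangular, $\operatorname{Dist}(U_1)\cong k[f]/(f^p)$, where $f$ is the divided-power generator of weight $-2$. A representation of $\mathbb T\ltimes U_1$ is then the same as a $\mathbb T$-graded module over this graded algebra, and all other gradings on $M$ are carried along coefficientwise.

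The first step is to factor the invariants. Write $B_1=\mathbb T_1\ltimes U_1$, so that $M^{B_1}=(M^{U_1})^{\mathbb T_1}$. The invariants $M^{U_1}$ are exactly $\ker f$. Since $\mathbb T_1$ acts on the weight space of weight $w$ through the character $w \bmod p$, taking $\mathbb T_1$-invariants keeps exactly the weights in $p\mathbb Z$. Hence
\[
 \operatorname{ch}M^{B_1}=\bigl[\operatorname{ch}\ker f\bigr]_{p\mathbb Z},
\]
and it suffices to prove the unbracketed identity
\[
 \operatorname{ch}\ker f=H_0(z)+\frac{(1-z^2)\operatorname{ch}M+H_1(z)-H_0(z)}{1-z^{2p}}.
\]
I expect the right-hand quotient to be a Laurent polynomial, and the block check below confirms this.

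The second step is a Jordan decomposition. A finite-dimensional graded module over the graded truncated polynomial ring $k[f]/(f^p)$ decomposes as a direct sum of graded cyclic modules. Each summand is a block $J_{a,m}=k[f]/(f^m)$ with $1\le m\le p$, whose top weight is $a$ and whose weights are $a,a-2,\dots,a-2(m-1)$. Every term in the identity is additive over direct sums: the characters, $\ker f$, and the Tate groups. So it is enough to verify the identity on a single block.

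The third step is to compute the Tate cohomology of a block, using the $2$-periodic complete resolution of $k$ over $k[f]/(f^p)$ with maps alternating between $f$ and $f^{p-1}$. This gives
\[
 \widehat H^0=\ker f/\operatorname{im}f^{p-1},\qquad
 \widehat H^1=\ker f^{p-1}/\operatorname{im}f .
\]
The class in $\widehat H^1$ carries the torus weight of the resolution term, which is shifted by $-\mathrm{wt}(f)=+2$.

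For a block with $m<p$, the socle gives $H_0=z^{a-2(m-1)}$, and the top class gives $H_1=z^{a+2}$. We also have
\[
 (1-z^2)\operatorname{ch}J_{a,m}=z^{a-2m+2}-z^{a+2}.
\]
So the numerator vanishes and the right-hand side equals $z^{a-2m+2}=\operatorname{ch}\ker f$.

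For a free block $m=p$, both Tate groups vanish. Here
\[
 (1-z^2)\operatorname{ch}J_{a,p}=z^{a-2p+2}(1-z^{2p}),
\]
so the quotient equals $z^{a-2p+2}$, which is again the socle weight. Summing over blocks proves the identity, and applying $[\;\cdot\;]_{p\mathbb Z}$ gives the formula in Lemma~\ref{det:jordan}.

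The main obstacle is matching conventions rather than the block computation. One point is the normalization of the ``cohomological torus weight'' on $\widehat H^i(U_1,M)$: the proof needs exactly the shift by $+2$ in odd degree and no shift in degree $0$. I would settle this first by writing out the weights of the periodic resolution explicitly. If the paper's convention instead uses $\widehat H^{-1}$ or a different twist, the formula changes only by a monomial factor. The other point is to justify the graded Jordan decomposition carefully. This uses the fact that $k[f]/(f^p)$ is a graded self-injective local algebra, so its graded indecomposable modules are the shifted cyclic modules.
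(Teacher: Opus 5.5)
Your proposal is correct and follows the paper's proof essentially step for step. You identify $M^{U_1}=\ker f$ over $\operatorname{Dist}(U_1)=k[f]/(f^p)$ and verify the identity block by block, using the same $+2$ weight shift in Tate degree one. On non-free blocks the numerator vanishes, and on free blocks exact division by $1-z^{2p}$ returns the socle weight. You then pass to $B_1=\mathbb T_1\ltimes U_1$ by keeping only weights in $p\mathbb Z$.

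The only difference is cosmetic: you index blocks by top weight rather than socle weight. One small correction: the graded Jordan decomposition comes from $k[f]/(f^p)$ being a quotient of the graded PID $k[f]$, i.e.\ from its being uniserial, not from self-injectivity, which by itself does not force indecomposables to be cyclic.
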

\begin{proof}
The distribution algebra $\operatorname{Dist}(U_1)=k[f]/(f^p)$ acts
on $M$, where $f$ lowers torus weight by two. Thus $M^{U_1}=\ker f$,
and we may decompose $M$ into Jordan blocks with weight-vector bases.
Both characters and Tate cohomology are additive on this decomposition,
so we first compute the expression inside the bracket for one block.

Let $J$ be a block of length $r$, with $1\le r\le p$, and let $b$
be the weight of its one-dimensional socle $\ker(f|_J)$. Its weights
are $b,b+2,\ldots,b+2r-2$, hence
\[
 \ch J=z^b(1+z^2+\cdots+z^{2r-2}) \qquad \text{and} \qquad
 \ch J^{U_1}=z^b.
\]
The two-periodic complex computing Tate cohomology has alternating
maps $f$ and $f^{p-1}$. With the cohomological torus weights included,
as in \cite[Lemma~3.7]{RSVdB2}, its characters in degrees zero and one are
\[
 \ch\widehat H^0(U_1,J)=\ch(\ker f/f^{p-1}J) \qquad \text{and} \qquad
 \ch\widehat H^1(U_1,J)=z^2\ch(\ker f^{p-1}/fJ).
\]
The factor $z^2$ in degree one compensates for the fact that $f$
lowers weight by two.

If $r<p$, then $f^{p-1}J=0$ and $\ker f^{p-1}=J$. The two Tate
characters are therefore $z^b$ and $z^{b+2r}$. Since
\[
 (1-z^2)\ch J=z^b-z^{b+2r},
\]
the numerator of the fraction in \eqref{det:jordan-invariants} is
zero for this block. Its contribution to the expression inside the
bracket is consequently $z^b=\ch J^{U_1}$.

If $r=p$, the block is free over $k[f]/(f^p)$, so both Tate groups
vanish. In this case its contribution is
\[
 \frac{(1-z^2)\ch J}{1-z^{2p}}
 =\frac{z^b(1-z^{2p})}{1-z^{2p}}=z^b,
\]
again the character of $J^{U_1}$. Summing over the blocks shows that
the expression inside the bracket is $\ch M^{U_1}$.
Finally, $B_1$ is the semidirect product of $U_1$ with the first
Frobenius kernel $\mathbb T_1$ of the torus. A torus weight restricts
trivially to $\mathbb T_1$ precisely when it is divisible by $p$.
Taking these weights gives $\ch M^{B_1}$ and proves the formula.
\end{proof}

Now fix $k\le j$ and apply the lemma in each finite internal degree of
$L_{jk}$. We retain the internal grading throughout the character
calculation and specialize it only after forming finite Hilbert
numerators. The expressions below display the resulting weight
characters, with $z$ still recording the torus weight.

First, we introduce the two factors
\[
 \mathcal K=\sum_{m=k+1}^{n}(-1)^{m-k-1}\binom nm z^{j-m} \qquad\text{and} \qquad
 \Sigma=(1-z^{-p})^n=\frac{\ch(S_+^p)}{\ch S^p}.
\]
Then the twisted Koszul resolution defining $L_{jk}$ in \S\ref{gr:kk} gives $\ch L_{jk}=\mathcal K\ch S$, and since $\ch S/\ch S^p=\ch Q$, the reduced character of $L_{jk}$
is therefore $\mathcal K\ch Q$.

For $\ell=0,1$, let
\[
 \mathcal T_\ell(z)=\sum_r\mathcal T_{j,k;\ell,r}z^{pr}
\]
be the finite numerator of the projected Tate character, so that
\[
 \frac{[\ch\widehat H^\ell(U_1,L_{jk})]_{p\mathbb Z}}{\ch S^p}
       =\mathcal T_\ell(z)\Sigma.
\]
The coefficients $\mathcal T_{j,k;\ell,r}$ are the Tate counts
calculated by the alternating sums in \eqref{gr:T}. The polynomial
factor before division is $\ch(S_+^p)$.
Define $Z^{\rm pre}$ to be the reduced character of the Borel
invariants, before induction to $G^{(1)}$. The lemma now gives
\[
 Z^{\rm pre}:=\frac{\ch L_{jk}^{B_1}}{\ch S^p}
 =\left[\mathcal T_0\Sigma+
 \frac{(1-z^2)\mathcal K\ch Q+(\mathcal T_1-\mathcal T_0)\Sigma}
      {1-z^{2p}}\right]_{p\mathbb Z}.
\]
All weights of $\ch S^p$ and of the displayed denominator are
divisible by $p$, so they commute with the weight projection.
Writing this over a common denominator gives
\[
 Z^{\rm pre}=\frac{\mathcal N}{1-z^{2p}} \qquad \text{and} \qquad
 \mathcal N=\left[(1-z^2)\mathcal K\ch Q+
          (\mathcal T_1-z^{2p}\mathcal T_0)\Sigma\right]_{p\mathbb Z}.
\]

As a Hilbert numerator of the finitely generated multigraded
$S^p$-module $L_{jk}^{B_1}$, $Z^{\rm pre}$ is a finite Laurent
polynomial, and thus the displayed division is exact.
The Tate counts have rows $0\le r\le n-3$ for $\mathcal T_0$ and
$1\le r\le n-2$ for $\mathcal T_1$. Since $\Sigma$ has normalized
weights in $[-n,0]$, the Tate part of $\mathcal N$ has normalized
support in $[-(n-1),n-1]$. All weights of
$(1-z^2)\mathcal K\ch Q$ lie strictly between $-np$ and $np$,
giving the same bounds after projection. Exact division therefore gives
\begin{equation}\label{det:support}
 \operatorname{supp}_z Z^{\rm pre}/p\subset[-(n-1),n-3].
\end{equation}

Write $Z_{j,k;c}=[z^{pc}]Z^{\rm pre}$ for its normalized weight
coefficients. To calculate them, let $P_{j,k;c}$ be the coefficient
of $z^{pc}$ in
\[
 \left[(1-z^2)\mathcal K\ch Q+
             (\mathcal T_1-\mathcal T_0)\Sigma\right]_{p\mathbb Z}.
\]
Expanding $\mathcal K$ and $\Sigma$ gives the finite formula
\eqref{gr:P}. This numerator has no normalized weight below $-n$.
In the required range $-(n-1)\le c\le n-3$, expansion of
$(1-z^{2p})^{-1}$ consequently uses only the terms
$\sum_{t=0}^{n-2}z^{2pt}$. Adding the coefficients of
$\mathcal T_0\Sigma$ gives exactly \eqref{gr:Z}.
Thus the character reconstruction requires only the finite sums
recorded in Appendix \ref{gr:appendix}.

For $k\le j$, \cite[Proposition~8.2, diagram~(8.5) and vanishing~(8.6)]{RSVdB2}
gives
\[
 K_{jk}^{G_1}\cong
 \operatorname{Ind}_{B^{(1)}}^{G^{(1)}}L_{jk}^{B_1}
 \qquad \text{and} \qquad R^1\operatorname{Ind}_{B^{(1)}}^{G^{(1)}}L_{jk}^{B_1}=0.
\]
Hence Euler induction computes the actual character. It sends a
normalized weight $c\ge0$ to $\chi_c$, sends weight $-1$ to zero,
and sends $c\le-2$ to $-\chi_{-c-2}$. The coefficient of $\chi_l$
after induction is therefore $Z_{j,k;l}-Z_{j,k;-l-2}$.

The decomposition in \cite[Theorem~4.3 and Proposition~9.1]{RSVdB2}
splits off the syzygy summands already counted by
$\mathcal O_{j,k;h,r}$, leaving $T\otimes S^p$ for a tilting module
$T$. Subtracting their reduced resolution characters from the induced
reduced character gives exactly the exterior-power subtraction in
\eqref{gr:KS}. By \eqref{det:support}, Euler induction involves only
highest weights at most $n-3$, and the reduced syzygy resolutions have the
same bound. Thus the residual tilting module has only the simple
summands $T(0),\ldots,T(n-3)$, since $n-3\le p-1$. Its character
therefore gives the covariant multiplicities in \eqref{gr:KS}.
For $k>j$ we use duality, as in \eqref{gr:duality}.

Ordinary cohomology contributes rows $1\le r\le n-3$. The potential
row zero forces tuple degree $-j<0$, and the potential upper row $n-2$
is excluded by the parity indicator forcing $e=1$ when $\ell=k$.

\subsection{Running example: the quadric cone}\label{gr:example}
For $n=4$ the family $\mathscr G_4$ consists of $C_4$, $S\{1\}$ and
$K\{1,1\}$, of ranks $1,2,2$. Here $C_4$ is the homogeneous coordinate ring of the smooth quadric
$\Gr(2,4)\subset\mathbb P^5$, and
$S\{1\}$ and $K\{1,1\}$ are its two spinor modules. The fusion atoms are
\[
 H_{4,0}=H_{4,p-2}=\frac{p(p^2-1)(p^2+1)}{30},\qquad
 H_{4,1}=H_{4,p-3}=\frac{p(p^2-1)(p^2-4)}{15},
\]
and $2^4-2=14$. For $n=4$, the sum in \eqref{gr:SS} has just
one term, $a^{(j)}_{3p-2}-a^{(j)}_{3p}$. For the source $C_4$
($j=0$), the weight counts give
\[
 a^{(0)}_0=\frac{p(4p^4-1)}3 \qquad \text{and} \qquad
 a^{(0)}_{3p-2}-a^{(0)}_{3p}=\frac{p(p^4-1)}{30},
\]
so
\eqref{gr:SC}--\eqref{gr:SK} give
\[
 a_1=\frac{p(4p^4-1)}3-14H_{4,0}=\frac{p(13p^4+2)}{15} \qquad \text{and} \qquad
 A_{S\{1\},C_4}=A_{K\{1,1\},C_4}=\frac{p(p^4-1)}{30}.
\]
For the source $S\{1\}$, the same calculation gives
\[
 a^{(1)}_0=\frac{8p(p^2-1)^2}{3} \qquad \text{and} \qquad
 a^{(1)}_{3p-2}-a^{(1)}_{3p}=\frac{p(p^4+10p^2+4)}{15}.
\]
We now compute the column of $K\{1,1\}$ directly by step~3 of
\S\ref{gr:mechanisms}.

Write $B=B_{1,1}$ and $E_{q,e}=E_{1;q,e}$. Then Equation~\eqref{gr:B}
gives
\[
 B=4H_{4,0}-H_{4,1}=\frac{p(p^2-1)(p^2+6)}{15}.
\]
The only syzygy target is $K\{1,1\}$, and the alternating terms in
\eqref{gr:O} cancel to give its multiplicity
\[
 o:=\mathcal O_{1,1;1,1}=B+E_{1,0}+E_{3,0}
   =\frac{p(p^4+10p^2+4)}{15},
\]
since
\[
 E_{1,0}=\binom{p+2}{3} \qquad \text{and} \qquad E_{3,0}=\binom p3.
\]
For the second identity, complement the four bounded exponents, so
total degree $3p-1$ becomes $p-3$.

Next, write $w$ for the normalized weight variable. Equation~\eqref{gr:T}
simplifies to
\[
 \sum_r\mathcal T_{1,1;0,r}w^r=B+E_{3,0}w
 \qquad \text{and} \qquad
 \sum_r\mathcal T_{1,1;1,r}w^r=(B+E_{3,0})w.
\]
Substituting these coefficients and the weight counts $W_4$ into
\eqref{gr:P}--\eqref{gr:Z} gives
\[
 Z_{1,1;0}-Z_{1,1;-2}=2p^3(p^2+1) \qquad \text{and} \qquad
 Z_{1,1;1}-Z_{1,1;-3}=-p^3.
\]
These are character coefficients before the syzygy subtraction, so
negative values are possible. The reduced resolution character of
$K\{1,1\}$ is $4\chi_0-\chi_1$. Thus \eqref{gr:KS} gives
\begin{align*}
 A_{S\{0\},K\{1,1\}}&=2p^3(p^2+1)-4o
    =\frac{2p(p^2-1)(13p^2+8)}{15}, \text{ and}\\
 A_{S\{1\},K\{1,1\}}&=-p^3+o
    =\frac{p(p^2-1)(p^2-4)}{15}.
\end{align*}
Together with $A_{K\{1,1\},K\{1,1\}}=o$, this completes the column.
As a check, it is the column of $S\{1\}$ with the two spinor entries
exchanged, as required by the outer automorphism $F\leftrightarrow F^*$
of $D_3=A_3$, which fixes $C_4$ and exchanges the spinor modules.
Altogether
\[
 A_4(p)=\frac p{15}
 \begin{pmatrix}
  13p^4+2 & 2(p^2-1)(13p^2+8) & 2(p^2-1)(13p^2+8)\\
  \tfrac12(p^4-1) & p^4+10p^2+4 & (p^2-1)(p^2-4)\\
  \tfrac12(p^4-1) & (p^2-1)(p^2-4) & p^4+10p^2+4
 \end{pmatrix},
\]
and its rank identity is $(1,2,2)A_4(p)=p^5(1,2,2)$.
Using the matrix $A_4(p)$, we calculate $a_e(C_4)$ for every $e$, and in turn calculate $s_p(C_4)$ for every odd prime $p$.
\begin{theorem}\label{gr:example-thm}
For every prime $p\ge3$,
\[
 s_p(C_4)=\frac{13p^2+8}{5(3p^2+2)},\qquad
 a_e(C_4)=\frac{13p^2+8}{5(3p^2+2)}p^{5e}
    +\frac{2(p^2+1)}{5(3p^2+2)}\left(\frac{p(p^2+2)}3\right)^e,
\]
The signature
increases to $13/15$ as $p \to \infty$.
\end{theorem}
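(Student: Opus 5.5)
The plan is to diagonalize the matrix $A_4(p)$ of \S\ref{gr:example} explicitly and read off $(A_4(p)^e)_{00}$. For $n=4$ the admissible range $p\ge\max\{n-2,3\}$ is exactly $p\ge3$, so the displayed $A_4(p)$ is the Frobenius multiplicity matrix for every odd prime. The family $\mathscr G_4=\{C_4,S\{1\},K\{1,1\}\}$ is closed under Frobenius pushforward. Hence, iterating $F_*$ column by column, the free multiplicity of $F^e_*C_4$ is $a_e(C_4)=(A_4(p)^e)_{00}$, which is the fact used in the proof of Theorem~\ref{thm:ffrt-spectrum}.

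\textbf{Step 1: split off the antisymmetric direction.} The two spinor rows and columns are exchanged by the symmetry $S\{1\}\leftrightarrow K\{1,1\}$ visible in $A_4(p)$. The vector $(0,1,-1)^T$ is therefore a right eigenvector. Its first coordinate is killed because the first row has equal entries in the two spinor columns. The second coordinate gives the eigenvalue
\[\tfrac p{15}\bigl[(p^4+10p^2+4)-(p^2-1)(p^2-4)\bigr]=p^3.\]
This direction is orthogonal to $e_0$ in the relevant sense. Indeed, $e_0=(1,0,0)^T$ lies in the invariant symmetric subspace spanned by $e_0$ and $e_1+e_2$, so $(A^e)_{00}$ is computed entirely on that subspace.

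\textbf{Step 2: the $2\times2$ symmetric block.} In the basis $e_0,\ e_1+e_2$ the restriction is
\[H=\frac p{15}\begin{pmatrix}13p^4+2 & 4(p^2-1)(13p^2+8)\\ \tfrac12(p^4-1) & 2p^4+5p^2+8\end{pmatrix}.\]
The rank identity $(1,2,2)A_4=p^5(1,2,2)$ shows that $p^5$ is an eigenvalue of $H$. The trace of $H$ is $p^5+p(p^2+2)/3$, so the other eigenvalue is $\lambda=p(p^2+2)/3$. Since $\lambda<p^5$, we get
\[(H^e)_{00}=\alpha p^{5e}+\beta\lambda^e\]
for constants $\alpha,\beta$, which are determined by the initial data $e=0,1$:
\[\alpha+\beta=1,\qquad \alpha p^5+\beta\lambda=a_1=\frac{p(13p^4+2)}{15}.\]

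\textbf{Step 3: solve and conclude.} Factoring gives
\[a_1-\lambda=\tfrac{p(p^2-1)(13p^2+8)}{15}\quad\text{and}\quad p^5-\lambda=\tfrac{p(p^2-1)(3p^2+2)}{3}.\]
Hence
\[\alpha=\frac{13p^2+8}{5(3p^2+2)}\quad\text{and}\quad \beta=1-\alpha=\frac{2(p^2+1)}{5(3p^2+2)},\]
which is the stated formula for $a_e(C_4)$. Dividing by $p^{5e}$ and letting $e\to\infty$ gives $s_p(C_4)=\alpha$, because $\lambda/p^5<1$. This agrees with Theorem~\ref{thm:ffrt-spectrum}. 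Finally,
\[\tfrac{13}{15}-\alpha=\frac{2}{15(3p^2+2)},\]
which is positive and strictly decreasing in $p$. So $s_p(C_4)$ increases to $13/15$.

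The only real obstacle is bookkeeping. One must confirm that $A_4(p)$ is valid for every $p\ge3$, which is the boundary case $p=n-1=3$ handled by the general construction and Lemma~\ref{gr:boundary}. One must also check the factorizations in Step 3. Everything else is elementary linear algebra.
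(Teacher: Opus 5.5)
Your proof is correct and follows the paper's argument. You use the same splitting into the antisymmetric eigenline (eigenvalue $p^3$) and the invariant symmetric plane containing $e_0$, the same eigenvalues $p^5$ and $p(p^2+2)/3$, and the same fit of $a_e(C_4)$ to $a_0=1$ and $a_1$. The only difference is that you read off $s_p(C_4)$ directly as $\lim_e a_e(C_4)/p^{5e}$, whereas the paper extracts it from the Perron vector normalized by $v_0+4v_1=1$ via Theorem~\ref{thm:ffrt-spectrum}; both routes are valid, and yours is slightly more self-contained.

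One small correction to your bookkeeping remark: for $n=4$ the prime $p=3=n-1$ already lies in the range $p\ge n-1$ of the main construction in \S\ref{gr:cov}. Lemma~\ref{gr:boundary} concerns $p=n-2=2$, which is excluded here, so that lemma is not needed.
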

\begin{proof}
The matrix commutes with the exchange of the two spinor coordinates. On
antisymmetric vectors it acts by $p^3$, while on symmetric vectors
$(v_0,v_1,v_1)$ it has eigenvalues $p^5$ and $p(p^2+2)/3$. The first row
of $A_4v=p^5v$ gives $v_1/v_0=(p^5-a_1)/(2A_{C_4,S\{1\}})
=(p^2+1)/(2(13p^2+8))$, and the rank normalization $v_0+4v_1=1$ gives
$s_p(C_4)=v_0$ by Theorem~\ref{thm:ffrt-spectrum}. Since $a_e(C_4)=(A_4^e)_{00}$
and the first coordinate vector lies in the invariant subspace of
symmetric vectors, $a_e(C_4)$ is a combination of $p^{5e}$ and $(p(p^2+2)/3)^e$, determined by
$a_0=1$ and $a_1$.
\end{proof}

\section{Polynomial formulas for the counting functions}\label{gr:poly}
Section~\ref{gr:section} and Appendix~\ref{gr:appendix} construct the
matrix from the weight counts $W_n$, the bounded-composition counts
$E_{j;q,e}$ and the fusion counts $H_{n,q}$. Here we evaluate these
functions by nonrecursive polynomial formulas. The cutoff estimates
ensure that the polynomial expression works throughout the stated prime
range, rather than only for sufficiently large primes. We then verify
the endpoint characteristic and deduce the degree bound, culminating in the proof of Theorem~\ref{gr:matrix}. For fixed $n$, these formulas determine $A_n(p)$ over $\mathbb Q[p]$ using only finite sums, products, binomial coefficients, and Bernoulli numbers. 

\subsection{Bounded-composition and weight counts}

Recall
\[
 C_{r,p}(M)=[x^M](1+x+\cdots+x^{p-1})^r,
\]
with a negative coefficient index interpreted as zero. The ordinary
$\operatorname{SL}_2$-weight multiplicity in the truncated polynomial
ring $Q=k[x_i,y_i]_{i=1}^n/(x_i^p,y_i^p)$ is
\begin{equation}\label{gr:weight}
 W_n(w)=C_{2n,p}\bigl(n(p-1)+w\bigr).
\end{equation}
Indeed, reflection of all $y$-exponents converts the weight
$\sum_i(\deg x_i-\deg y_i)=w$ into the displayed total degree.

\begin{lemma}\label{gr:binomial}
Fix $n\ge4$ and integers $\alpha,\beta$. Suppose that either
\[
 r=2n,\quad \text{for } -2n+1\le\beta\le-1
 \qquad\text{or}\qquad
 r=n,\quad \text{for } -n+1\le\beta\le-1.
\]
Then, for every integer $p\ge\max\{n-2,3\}$, we have
\begin{equation}\label{gr:C}
 C_{r,p}(\alpha p+\beta)
 =\sum_{\substack{0\le t\le r\\t<\alpha}}
 (-1)^t\binom rt
 \binom{(\alpha-t)p+\beta+r-1}{r-1},
\end{equation}
where the last binomial coefficient is evaluated as a polynomial in
its upper argument. In particular, this is a single polynomial in $p$,
with a summation range independent of $p$. The two cases cover the
weight counts and cohomology counts, respectively, in the construction of $A_n(p)$.
\end{lemma}
\begin{proof}
The identity
\[
 (1+x+\cdots+x^{p-1})^r=(1-x^p)^r(1-x)^{-r}
\]
gives the usual inclusion--exclusion formula
\[
 C_{r,p}(\alpha p+\beta)
 =\sum_{\substack{0\le t\le r\\(\alpha-t)p+\beta\ge0}}
 (-1)^t\binom rt
 \binom{(\alpha-t)p+\beta+r-1}{r-1}, 
\]
but its cutoff depends on $p$. We show that replacing this condition by
$t<\alpha$ leaves the value unchanged when the binomial coefficients
are interpreted polynomially.

Since $\beta<0$, every term with $t\ge\alpha$ has
$(\alpha-t)p+\beta<0$ and is excluded from both sums.
For $t<\alpha$, put $N=(\alpha-t)p+\beta$. If $N\ge0$, the term
occurs in both sums with the same value. If $N<0$, it occurs only in
\eqref{gr:C}, and we must show that its polynomial value is zero.

Because $\alpha-t\ge1$ and $p\ge n-2$, the hypotheses give
\[
 N\ge p+\beta\ge
 \begin{cases}
 -n-1\ge-(2n-1)=-(r-1),&r=2n,\\
 -1\ge-(n-1)=-(r-1),&r=n.
 \end{cases}
\]
Thus any such negative $N$ lies in $\{-(r-1),\ldots,-1\}$. For
these values the polynomial
\[
 \binom{N+r-1}{r-1}
 =\frac{(N+1)(N+2)\cdots(N+r-1)}{(r-1)!}
\]
vanishes. Every extra term introduced by the fixed cutoff therefore
contributes zero, proving \eqref{gr:C}. Its summands are polynomials
in $p$ and its range is fixed, proving polynomiality.
\end{proof}

\subsection{Fusion counts}
Recall that, with $V_0,\ldots,V_{p-2}$ the usual basis of the level-$(p-2)$
$\operatorname{SL}_2$ fusion ring,
\[
 H_{n,q}(p)=[V_q]\left(\sum_{i=0}^{p-2}V_i\right)^n.
\]
The endpoint label $q=p-1$ and its reflection $-1$ mean zero.
Write $\chi_q(z)=z^q+z^{q-2}+\cdots+z^{-q}$, also regarded as a
polynomial in $z+z^{-1}$. For $0\le q\le n-3$, define integers
$b_{n,q,m}$ by
\begin{equation}\label{gr:b}
 (x^{1-n}-x^{2-n})\chi_q(2-4x)
       =\sum_{m=1}^{n-1}b_{n,q,m}x^{-m}.
\end{equation}
Here $\chi_q(2-4x)$ means the polynomial in $z+z^{-1}$, evaluated
at $2-4x$. It can be expanded directly using
\[
 \chi_q(X)=\sum_{h=0}^{\lfloor q/2\rfloor}
          (-1)^h\binom{q-h}{h}X^{q-2h}.
\]
Recall $B_{2j}$ denotes the Bernoulli numbers, with $B_2=1/6$, and define
rational coefficients $c_{m,j}$ by the finite product
\begin{equation}\label{gr:cosecant-coefficients}
 \frac{1}{(2m-1)!}\prod_{h=1}^{m-1}(u+4h^2)
    =\sum_{j=0}^{m-1}c_{m,j}u^j\qquad(m\ge1).
\end{equation}
The empty product is $1$, so $c_{1,0}=1$.

\begin{lemma}\label{gr:fusion}
Put
$P_m(z)=\sum_{r=1}^{z-1}\csc^{2m}(r\pi/z)$ for positive integral
$z$. Its polynomial extension is
\begin{equation}\label{gr:cosecant-polynomial}
 P_m(z)=\sum_{j=0}^{m-1}c_{m,j}
 \frac{(-1)^j2^{2j+2}B_{2j+2}}{2j+2}(z^{2j+2}-1).
\end{equation}
The fusion counts are given by the finite polynomial formula
\begin{equation}\label{gr:H}
\begin{split}
 H_{n,q}(p)
 &=\frac{2^{2-n}}p\sum_{m=1}^{n-1}
       b_{n,q,m}\bigl(P_m(2p)-P_m(p)\bigr)\\
 &=2^{2-n}\sum_{m=1}^{n-1}\sum_{j=0}^{m-1}b_{n,q,m}c_{m,j}
 \frac{(-1)^j2^{2j+2}(2^{2j+2}-1)B_{2j+2}}{2j+2}\,p^{2j+1}.
\end{split}
\end{equation}
In particular $H_{n,q}\in\mathbb Q[p]$ has degree at most $2n-3$.
Moreover $H_{n,p-2-q}=H_{n,q}$, including the zero endpoint convention.
\end{lemma}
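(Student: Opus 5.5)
The proof rests on the Verlinde formula for the level-$(p-2)$ fusion ring of $\SL_2$. Put $\ell=p$, so the $S$-matrix is $S_{ab}=\sqrt{2/p}\,\sin((a+1)(b+1)\pi/p)$ for $0\le a,b\le p-2$. For any fusion power we have
\[
 [V_q]X^n=\sum_{s=1}^{p-1}\frac{2}{p}\sin(s\pi/p)\sin((q+1)s\pi/p)\,
 \Bigl(\frac{x_s}{\sin(s\pi/p)}\Bigr)^n\cdot\frac{1}{\sin(s\pi/p)},
\]
where $x_s$ is the eigenvalue of $X$ on the $s$th character. For $X=\sum_{i=0}^{p-2}V_i$ this eigenvalue is $\sum_{i}\sin((i+1)s\pi/p)/\sin(s\pi/p)$. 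The numerator is a geometric sum, which equals $\cot(s\pi/2p)$ for odd $s$ and $0$ for even $s$. So only odd $s$ contribute, and
\[
 H_{n,q}=\frac2p\sum_{s\ \mathrm{odd}}\frac{\sin((q+1)\theta_s)}{\sin\theta_s}\,\cot^n(\theta_s/2),\qquad \theta_s=s\pi/p.
\]
The factor $\sin((q+1)\theta)/\sin\theta$ is $\chi_q$ evaluated at $z+z^{-1}=2\cos\theta$.

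Next I rewrite everything in the variable $x=\csc^2(\theta/2)$. We have $2\cos\theta=2-4\sin^2(\theta/2)=2-4x^{-1}$ and $\cot^2(\theta/2)=x-1$. The expansion \eqref{gr:b} is written in $x$ with $\chi_q(2-4x)$, so I need to reconcile conventions. I expect the intended substitution is $x\mapsto$ one of $\sin^2(\theta/2)$ or $\csc^2(\theta/2)$, and I would check which reading makes $(x^{1-n}-x^{2-n})\chi_q(2-4x)$ a Laurent polynomial in $x^{-1}$ with exponents $1,\dots,n-1$. With $u=\sin^2(\theta/2)$, $\cot^n(\theta/2)\cdot(\text{odd-}s\text{ structure})$ must be massaged into $u^{1-n}-u^{2-n}$ times something. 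Pairing $s$ with $p-s$ ($\theta\mapsto\pi-\theta$) swaps $\sin$ and $\cos$ of $\theta/2$ and multiplies $\chi_q$ by $(-1)^q$. This symmetrization, together with the reflection $q\mapsto p-2-q$, should turn the odd-$s$ sum into a sum over all $s$ of an expression that is even in $\cot$. The odd-$n$ powers of $\cot$ are then eliminated. Concretely, $\cot^{n}$ appears times $\sin$-factors that combine into $\cos^{n-1}(\theta/2)\cdot\sin^{1-n}(\theta/2)\cdot(\ldots)$, and since $\cos^2=1-u$ the result is a polynomial in $u^{-1}$. The claimed expansion encodes exactly this. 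Once the summand is $\sum_m b_{n,q,m}u^{-m}=\sum_m b_{n,q,m}\csc^{2m}(s\pi/2p)$, the sum over odd $s$ equals the sum over all $1\le s\le 2p-1$ minus the sum over even $s$, i.e.\ $P_m(2p)-P_m(p)$. This yields the first line of \eqref{gr:H}, with the constant $2^{2-n}$ coming from the powers of $2$ in the trigonometric identities.

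For \eqref{gr:cosecant-polynomial} I would use the classical generating function. The quantity $\sum_{r=1}^{z-1}\csc^{2m}(r\pi/z)$ is the constant term that arises from expanding $\csc^{2m}$ in partial fractions. Equivalently, I would use the known identity $\csc^{2m}(t)=\sum_j c_{m,j}\,(-1)^j\frac{d^{2j}}{dt^{2j}}\csc^2 t$ with coefficients from $\prod_{h=1}^{m-1}(D^2+4h^2)/(2m-1)!$ (the $u$ in \eqref{gr:cosecant-coefficients} standing for $-D^2$). This identity is proved by induction on $m$ from $(\csc^{2m})''=2m(2m+1)\csc^{2m+2}-4m^2\csc^{2m}$. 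Summing over $r$ and using $\sum_{r}\csc^2(t+r\pi/z)=z^2\csc^2(zt)$ at $t\to0$ (compare Laurent coefficients, after removing the $r=0$ pole) produces the Bernoulli numbers, via $\csc^2 t=\sum_k(-1)^{k+1}(2k-1)2^{2k}B_{2k}t^{2k-2}/(2k)!$. The difference $z^{2j+2}-1$ comes from subtracting the $r=0$ term. Substituting $z=2p$ and $z=p$ then gives the second line. The factor $1/p$ cancels one power because only odd powers $p^{2j+1}$ survive, and the degree is $2(n-2)+1=2n-3$ since $j\le m-1\le n-2$.

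Finally, for the symmetry $H_{n,p-2-q}=H_{n,q}$ I would apply the simple-current automorphism $V_i\mapsto V_{p-2-i}$ of the fusion ring. It fixes $\sum_iV_i$ and maps $V_q$ to $V_{p-2-q}$. Alternatively, in the Verlinde sum it multiplies by $(-1)^{s+1}$, which equals $1$ on odd $s$. The endpoint convention holds because $q=p-1$ and $q=-1$ give $\sin(p\theta_s)=0=\sin 0$. I expect the main obstacle to be the bookkeeping in the second step: getting the exact half-angle reduction, the pairing of $s$ with $p-s$, and the power of $2$ to match \eqref{gr:b} precisely.
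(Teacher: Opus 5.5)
\textbf{Verdict: genuine gap.} Your outline follows the paper's route: the sine-projector form of Verlinde, the half-angle variable, the expansion \eqref{gr:b}, the odd/even splitting into $P_m(2p)-P_m(p)$, and then $\csc^{2m}=L_m(\partial)\csc^2$ together with $\sum_r\csc^2(x+r\pi/z)=z^2\csc^2(zx)$. However, the derivation of the first line of \eqref{gr:H}, which is the heart of the lemma, is never carried out, and the computation you do give is wrong.

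\emph{The Verlinde summand.} In your formula for $H_{n,q}$ you put the numerator $\cot(\theta_s/2)$ of the geometric sum in place of the eigenvalue. The eigenvalue is
\[
 \lambda_s=\cot(\theta_s/2)/\sin\theta_s=1/(2\sin^2(\theta_s/2)),
\]
so each of your summands is missing a factor $\sin^{2-n}\theta_s$. For $p=3$, $n=4$, $q=0$ your formula gives $6$, whereas $X^4=8(V_0+V_1)$. With the correct eigenvalue, no odd powers of $\cot$ ever appear. Put $u_s=\sin^2(\theta_s/2)$. Then $S_{0s}S_{qs}=\frac2p\sin^2\theta_s\,\chi_q(2\cos\theta_s)=\frac8p\,u_s(1-u_s)\chi_q(2-4u_s)$ and $\lambda_s^n=(2u_s)^{-n}$. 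So each odd-$s$ term is exactly
\[
 \frac{2^{3-n}}p\bigl(u_s^{1-n}-u_s^{2-n}\bigr)\chi_q(2-4u_s)=\frac{2^{3-n}}p\sum_m b_{n,q,m}\csc^{2m}(s\pi/2p).
\]
The symmetrization you propose ($s\leftrightarrow p-s$ together with $q\mapsto p-2-q$) is therefore unnecessary. It also cannot work: since $p$ is odd, it exchanges odd and even $s$, and the even terms are zero.

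\emph{The passage to $P_m(2p)-P_m(p)$.} This step is also missing. Verlinde gives a sum over odd $s\in[1,p-1]$, whereas $P_m(2p)-P_m(p)$ is the sum over odd $s\in[1,2p-1]$. You need two facts:
\begin{itemize}
\item the reflection $s\mapsto 2p-s$ preserves parity and $u_s$;
\item the middle term $s=p$ vanishes, because there $u_s=1$, which is a root of $x^{1-n}-x^{2-n}$.
\end{itemize}
This doubling supplies the factor $\frac12$ that turns $2^{3-n}$ into $2^{2-n}$. It is not a matter of powers of $2$ in trigonometric identities.

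\emph{Two smaller slips.}
\begin{itemize}
\item The operator identity has no $(-1)^j$. With $u=\partial^2$ in \eqref{gr:cosecant-coefficients}, one has $\csc^{2m}=\sum_j c_{m,j}\partial^{2j}\csc^2$. Your sign would give $P_2(z)=\frac29(z^2-1)-\frac1{45}(z^4-1)$ instead of $(z^2-1)(z^2+11)/45$. Carrying out your induction from the recursion would give the correct sign.
\item The map $V_i\mapsto V_{p-2-i}$ is not a ring automorphism, since it moves $V_0$. The correct argument uses multiplication by the invertible simple current $J=V_{p-2}$: $JX=X$ gives $JX^n=X^n$, hence $H_{n,p-2-q}=H_{n,q}$. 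Your alternative, via the sign $(-1)^{s+1}$ equalling $1$ on odd $s$, is correct and is the paper's argument.
\end{itemize}
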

\begin{proof}
At $X_r=2\cos(r\pi/p)$, the sum of the fusion characters is
\[
 \sum_{i=0}^{p-2}\chi_i(X_r)
   =\frac{1-(-1)^r}{2-2\cos(r\pi/p)}.
\]
The sine-orthogonality projector for $V_q$, with
$x_r=\sin^2(r\pi/(2p))$, therefore gives
\[
 H_{n,q}(p)=\frac{2^{3-n}}p
 \sum_{\substack{1\le r<p\\r\ \mathrm{odd}}}
 (x_r^{1-n}-x_r^{2-n})\chi_q(2-4x_r).
\]
The Laurent polynomial in \eqref{gr:b} vanishes at $x=1$.
Reflection $r\mapsto2p-r$ doubles this sum, and the possible middle
term $r=p$ is zero. Separating odd and even indices proves
\eqref{gr:H}. The reflected fusion label has the same sine projector
on odd indices, proving the last assertion.

To prove \eqref{gr:cosecant-polynomial}, put $\partial=d/dx$ and
\[
 L_m(\partial)=\prod_{h=1}^{m-1}
 \frac{\partial^2+4h^2}{2h(2h+1)}
 =\sum_{j=0}^{m-1}c_{m,j}\partial^{2j}.
\]
The identity
$(\csc^{2m}x)''+4m^2\csc^{2m}x=2m(2m+1)\csc^{2m+2}x$
shows that $\csc^{2m}x=L_m(\partial)\csc^2x$.
Applying this operator to
$\sum_{r=0}^{z-1}\csc^2(x+r\pi/z)=z^2\csc^2(zx)$ and removing
the term $r=0$ expresses $P_m(z)$ as the constant Laurent coefficient of
$L_m(\partial)(z^2\csc^2(zx)-\csc^2x)$.
The Bernoulli expansion of $\cot x$, differentiated once, gives
\[
 (2j)!\,[x^{2j}]\csc^2x
   =\frac{(-1)^j2^{2j+2}B_{2j+2}}{2j+2}.
\]
Taking the constant coefficient proves
\eqref{gr:cosecant-polynomial}. Substitution gives the second line
of \eqref{gr:H}, whose largest possible exponent is $2n-3$.
\end{proof}

\subsection{The characteristic range and degree bound}
The character calculation in \S\ref{gr:cov} first assumed $p\ge n-1$.
The following lemma verifies the endpoint $p=n-2$ of the permissible range before we assemble
the polynomial formulas.

\begin{lemma}\label{gr:boundary}
The finite formulas \eqref{gr:SC}--\eqref{gr:SK} give the needed
multiplicities at $p=n-2$, whenever this is an odd prime, including
$n=5,p=3$.
\end{lemma}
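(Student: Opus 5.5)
The plan is to rerun the derivation of \S\ref{gr:cov} at $p=n-2$. Since $p\ge3$ is odd, this means $n\ge5$, and $n=5,p=3$ is the smallest case. The derivation used $p\ge n-1$ in exactly one place: the claim that every indecomposable tilting summand of $T(j)\otimes Q$ relevant to residues $0,-2$ modulo $p$ has highest weight in $\Lambda_n$, with the two-term characters $\chi_{ip}+\chi_{ip-2}$ and $\chi_{ip-2}+\chi_{(i-2)p}$. Everything else carries over unchanged: the telescoping formula \eqref{gr:weyl-coefficients}, the covariant decomposition of \cite[Proposition~5.1, Lemma~5.4]{RSVdB2}, and the fusion count for the $K$-targets. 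So I would isolate the highest weights $c$ of $T(j)\otimes Q$ that satisfy $c\le n(p-1)+j$ and can meet residues $0,-2$. At $p=n-2$ this bound is $n(p-1)+j\le (p+2)(p-1)+p-1=p^2+2p-3$.

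First step: the Donkin tensor product $T(c)=T(c_1)\otimes T(d)^{[1]}$ with $p-1\le c_1\le 2p-2$ now allows the top digit $d=p-1$ or $d=p$, in addition to $d\le p-2$. For $d\le p-1$ we still have $T(d)=\nabla(d)$, so the characters are unchanged. The new feature is $d=p$, which forces $c\ge p^2+p-1$. I would list these weights: among multiples $ip$ and $ip-2$, they are $c=(p+1)p$ and $c=(p+2)p-2$, and the latter exceeds $p^2+2p-3$ only if $-2>-3$, which is false. So the candidates are $ip$ with $i=p+1=n-1$ and $ip-2$ with $i=p+2=n$. Second step: compare these candidates with $\Lambda_n$. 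The set $\Lambda_n$ has range $i\le n-1$, and the weight $np-2$ lies outside it. I would check directly that $np-2=n(p-1)+n-2>n(p-1)+j$ for $j\le n-3$. That weight therefore never occurs, and the only new weight is $(n-1)p=p^2+p$, for which $d=p=n-2$ is no longer restricted.

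Third step, which I expect to be the main obstacle: compute $\ch T(p^2+p)$ as $\ch T(2p-1)\cdot\ch T(p)^{[1]}$ with $T(p)=T(p-1+1)$, using $\ch T(p)=\chi_p+\chi_{p-2}$ twisted. I would then show that, after Euler-reflecting, its residue-$0$ and residue-$-2$ Weyl components are still $\chi_{(n-1)p}+\chi_{(n-1)p-2}$, up to terms whose weights avoid $\Lambda_n$. One route is to check that the extra constituent $\chi_{(p-2)p+\cdots}$ lands at residues other than $0,-2$. Another is to verify that it contributes equally to $a_{ip-2}$ and $a_{ip}$, so that it cancels in \eqref{gr:SS}.

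If this works, the recursion $U_{ip-2}=a_{ip-2}-a_{ip}+U_{(i+2)p-2}$ still terminates, with the same summation range, and \eqref{gr:SC}--\eqref{gr:SS} follow. Failing that, I would fall back on the smallest case $n=5,p=3$ and on general $n$ by direct use of the top-digit character.

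Finally, \eqref{gr:SK} needs only $p\ge n-2$ through the fusion description of \cite[Lemma~5.4]{RSVdB2}, with the endpoint convention in Lemma~\ref{gr:fusion}. The Steinberg remark covers $j=p-1=n-3$, where $H_{n,j}=0$.
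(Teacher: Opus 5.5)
Your overall structure matches the paper's. At $p=n-2$ the only new phenomenon is the top Donkin digit $d=p$, and the only affected relevant weight is $H=(n-1)p=p(p+1)$. Your second step reaches this correctly. In the first step, note that $-2>-3$ is true, which is exactly why $(p+2)p-2$ is excluded.

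The gap is in the third step. That step carries the whole lemma, and you set it up incorrectly and leave it open. The Donkin factorization is $T(H)=T(p)\otimes T(p)^{[1]}$, with first digit $p\in[p-1,2p-2]$. The factor $T(2p-1)$ lies outside Donkin's range, and $T(2p-1)\otimes T(p)^{[1]}$ would have highest weight $p^2+2p-1$. Writing $\ch T(p)=\chi_p+\chi_{p-2}=\chi_1\chi_{p-1}$ and using $\chi_{p-1}\chi_d^{[1]}=\chi_{p-1+pd}$ gives
\[
 \ch T(H)=\chi_H+\chi_{H-2}+\chi_L+\chi_{L-2},\qquad L=p(p-1)=(n-3)p .
\]
So your first route fails. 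The extra constituent sits exactly at residues $0$ and $-2$, at the weights $(n-3)p$ and $(n-3)p-2$, and both lie in $\Lambda_n$ since $n\ge5$. Your expectation that the residue-$0,-2$ part of $\ch T(H)$ is still $\chi_H+\chi_{H-2}$, up to weights outside $\Lambda_n$, is false.

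Your second route is the correct one, and it should be the argument. Let $N_\lambda$ be the actual multiplicity of $T(\lambda)$ in $T(j)\otimes Q$. The extra constituent adds $N_H$ to $a_L$ and to $a_{L-2}$, and to no other Weyl coefficient. These are the two terms of the single pair $a_{ip-2}-a_{ip}$ with $i=p-1$. Since $L-2\ne0$ for $p\ge3$, the identities behind the telescoping survive unchanged:
\[
 a_0=N_0+N_{2p-2},\qquad a_{ip-2}-a_{ip}=N_{ip-2}-N_{(i+2)p-2}.
\]
Hence \eqref{gr:SC}--\eqref{gr:SS} follow. The paper says the same thing via the formal inverse: the true multiplicities equal $U_\lambda$ except $N_L=U_L-U_H$. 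Since $L$ has the form $ip$, the covariant entries never read it. Equivalently, $T(L)=T(p)\otimes T(p-2)^{[1]}$ has $T(L)^{G_1}=0$.

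Finally, the Steinberg source $j=p-1$ is a second place where the endpoint is genuinely new, since it never occurs for $p\ge n-1$. Observing $H_{n,p-1}=0$ only evaluates the formula. You also need that $T(p-1)$ is $G_1$-projective, so higher $G_1$-cohomology vanishes and the decomposition is tilt-free with coefficient module $(T(p-1)\otimes Q)^{G_1}$. This is what justifies the zero entries \eqref{gr:SK} and the uncorrected free entry \eqref{gr:SC} at that source.
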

\begin{proof}
Here $p\ge3$ and every source $T(j)$, for $0\le j\le n-3=p-1$,
is simple. The bound $n(p-1)+j\le np-3$ still restricts the
relevant linked weights to $\Lambda_n$. Write $\tau_\lambda$ for
the formal two-term characters used in the tilting inversion.
The tilting tensor-product formula gives
$\tau_\lambda=\ch T(\lambda)$ on this list with exactly one
exception. Put $H=p(p+1)$ and $L=p(p-1)$. Since
$T(H)=T(p)\otimes T(p)^{[1]}$, multiplication of
$\ch T(p)=\chi_p+\chi_{p-2}$ with its twist gives
\[
 \ch T(H)=\chi_H+\chi_{H-2}+\chi_L+\chi_{L-2}
          =\tau_H+\tau_L.
\]
Every other relevant weight has remaining tilting digit at most $p-1$.
Consequently the actual multiplicities agree with the formal inverse
$U_\lambda$ except that the coefficient at $L$ is $U_L-U_H$.
The covariant entries read only
\[
 U_0,\qquad U_{2p-2},\qquad \text{and} \qquad
 U_{(l+2)p-2}\quad(1\le l\le p-1),
\]
so the altered coefficient is unused. The formulas
\eqref{gr:SC}--\eqref{gr:SS} are exactly these same formal coefficients
written as finite sums, so they remain valid. Equivalently,
$T(L)^{G_1}=0$, since its first tilting digit is $p$ and
$T(p)^{G_1}=0$.

For $j\le p-2$, the covariant decomposition and fusion correction
therefore apply unchanged. At $j=p-1$, the Steinberg module is
$G_1$-projective, so higher cohomology vanishes and the decomposition
is tilt-free with coefficient module $(T(p-1)\otimes Q)^{G_1}$
\cite[Proposition~5.1 and its proof]{RSVdB2}.
The formulas give the same result: all fusion labels are $p-1$ or
$-1$, and $H_{n,p-1}=0$ by Lemma~\ref{gr:fusion}.

Finally, for the K-columns, the published cohomology and induction arguments
hold for $p\ge n-2$, while the residual tiltings have highest weight at most
$n-3=p-1$ and remain simple. The finite-truncation argument of
\S\ref{gr:ks} and the cutoff estimates above also hold at this endpoint. 
\end{proof}

Finally, we are ready to prove the main result of this section:

\begin{theorem}[Polynomiality of the multiplicities]\label{gr:matrix}
For fixed $n\ge4$, every entry of $A_n(p)$ agrees with a polynomial
in $\mathbb Q[p]$ of degree at most $d_n=2n-3$ throughout the prime
range $p\ge\max\{n-2,3\}$.
The entries are given by \eqref{gr:SC}--\eqref{gr:SK} for covariant
sources and by \eqref{gr:KK}--\eqref{gr:KS} and \eqref{gr:duality}
for syzygy sources.
\end{theorem}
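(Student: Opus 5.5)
The proof will assemble the three ingredients already in place: the column recipe of \S\ref{gr:mechanisms}, the polynomial evaluations of the counting functions in Lemmas~\ref{gr:binomial} and \ref{gr:fusion}, and the endpoint verification of Lemma~\ref{gr:boundary}. The degree bound then follows from the rank identity. First I treat a covariant source $S\{j\}$. By \eqref{gr:weyl-coefficients}, each $a^{(j)}_\lambda$ is a difference of two values $W_n(\lambda\mp\cdots)$. Each $\lambda\in\Lambda_n$ is affine in $p$, namely $0$, $p-2$, $ip$ or $ip-2$, so the argument $n(p-1)+\lambda-j$ or $n(p-1)+\lambda+j+2$ has the form $\alpha p+\beta$. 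Here $\alpha$ is fixed and $\beta$ is a fixed integer depending only on $n,j,\lambda$. I will check that after absorbing a multiple of $p$ into $\alpha$ one can always arrange $-2n+1\le\beta\le-1$, using $0\le j\le n-3$. For instance $-n-j$ and $-n+j+2$ lie in this window, and the $-2$ shifts stay inside it. Lemma~\ref{gr:binomial} with $r=2n$ then makes every $a^{(j)}_\lambda$ a single polynomial. Since the sums in \eqref{gr:SS} have ranges depending only on $n$, and $H_{n,j}$ is polynomial by Lemma~\ref{gr:fusion}, the entries \eqref{gr:SC}--\eqref{gr:SK} are polynomials.

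For a syzygy source $K\{j,k\}$ with $k\le j$, I follow step~3 of the recipe. The generic term $B_{j,k}$ in \eqref{gr:B} is a fixed $\mathbb Z$-combination of fusion counts $H_{n,q}$. Any labels of the form $p-2-q$ are converted back to fixed labels by the symmetry $H_{n,p-2-q}=H_{n,q}$ of Lemma~\ref{gr:fusion}, and labels $p-1$ or $-1$ give zero. The $E_{j;q,e}$ count compositions of $(q+e)p-j-2e$ into $n$ bounded parts, that is, $C_{n,p}$ at an argument $\alpha p+\beta$ with $\beta=-j-2e$. Here I must confirm $-n+1\le\beta\le-1$ for the indices that occur. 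This is where the indicator structure of \eqref{gr:T}, \eqref{gr:O} matters, and where the complementation trick used in Example~\ref{gr:example} (replacing degree $M$ by $n(p-1)-M$) may be needed to move $\beta$ into the window. Then Lemma~\ref{gr:binomial} with $r=n$ applies. The coefficients $\mathcal O$ and $\mathcal T$ are therefore polynomial, and so are the syzygy rows. For the covariant rows, \eqref{gr:P} and \eqref{gr:Z} are finite sums over ranges fixed by \eqref{det:support}. Their terms are products of the fixed coefficients of $\mathcal K$ and $\Sigma$ with $W_n$-values at affine arguments and with $\mathcal T$. The subtraction in \eqref{gr:KS} uses $\mathcal O$ times fixed resolution characters. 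All of these are polynomial, and duality \eqref{gr:duality} transfers the result to the columns with $k>j$.

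The characteristic range is handled as follows. The derivations assumed $p\ge n-1$, and Lemma~\ref{gr:boundary} shows that the same formulas are valid at $p=n-2$. Since Lemma~\ref{gr:binomial} holds for every integer $p\ge\max\{n-2,3\}$, the polynomial agrees with the true multiplicity at each admissible prime. For the degree bound, I use Corollary~\ref{cor:ffrt-rational}: each nonzero entry is nonnegative at infinitely many primes, so its leading coefficient is positive. The identity $\rho A=p^{2n-3}\rho$ has positive rank weights from \eqref{det:ranks}, so no cancellation of leading terms can occur in $\rho A$, and each entry has degree at most $2n-3$. Alternatively, $H_{n,q}$ has degree at most $2n-3$ by Lemma~\ref{gr:fusion}.

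I expect the main obstacle to be the bookkeeping of the $\beta$-windows in Lemma~\ref{gr:binomial}. One has to verify, for every $E$- and $W$-argument actually appearing in the appendix formulas, that the offset can be normalized into the allowed interval. Failing that, one must check directly that the terms outside the window contribute zero at every $p\ge n-2$.
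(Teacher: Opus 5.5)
Your proposal is correct and follows the paper's proof essentially step for step: every entry is a $p$-independent finite combination of $W_n$, $E_{j;q,e}$ and $H_{n,q}$, Lemmas~\ref{gr:binomial} and~\ref{gr:fusion} make these polynomial, Lemma~\ref{gr:boundary} covers $p=n-2$, and the rank identity $\rho A_n(p)=p^{d_n}\rho$ together with Corollary~\ref{cor:ffrt-rational} gives the degree bound. The window bookkeeping you defer is immediate and needs no complementation: every $E$-offset is $-j-2e\in[-n+1,-1]$ because $1\le j\le n-3$, and every weight query, including $W_n(pc-j+m)$ and $W_n(pc-j+m-2)$ in \eqref{gr:P}, has constant weight in $[-(n-1),n-1]$ and hence composition offset in $[-2n+1,-1]$; note also that your ``alternatively'' remark via $\deg H_{n,q}\le 2n-3$ would not bound the degrees by itself, since individual $W_n$ values have degree $2n-1$ in $p$.
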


\begin{proof}
Section~\ref{gr:section} and Appendix~\ref{gr:appendix} express every
entry as a finite linear combination, with coefficients and ranges
independent of $p$, of $W_n$, $E_{j;q,e}$ and $H_{n,q}$.
Every weight query has constant weight in $[-(n-1),n-1]$, so its
bounded-composition index has constant part in $[-2n+1,-1]$, while
every $E$-index has constant part $-j-2e\in[-n+1,-1]$.
Lemma~\ref{gr:binomial} therefore supplies a polynomial for each such
count throughout the prime range. All fusion labels reduce to
$0\le q\le n-3$, and \eqref{gr:H} supplies their polynomials.
The support argument in \S\ref{gr:ks} makes every coefficient
extraction a finite sum with range depending only on $n$.

Substituting these formulas proves polynomiality of every entry.
Section~\ref{gr:section} and Lemma~\ref{gr:boundary} identify these
polynomials with the actual multiplicities at every admissible prime.
Finally, the rank identity $\rho A_n(p)=p^{d_n}\rho$ and positivity
of the ranks give the degree bound by
Corollary~\ref{cor:ffrt-rational}.
\end{proof}

\section{Characteristic limits}\label{sig:section}
For each fixed prime, Section~\ref{sec:ffrt} obtains the signature from
the normalized Perron eigenvector. In this section we show $\displaystyle \lim_{p \to \infty} s_p(C_n)=\lim_{p \to \infty} \dfrac{a_1(C_n)}{p^{d_n}}$ and use this to give an explicit calculation for the limit $F$-signature. 
Since the Pl\"ucker
rings are Gorenstein, this comparison follows from the socle formula
for splitting numbers and Smirnov's uniform Hilbert--Kunz estimate \cite{SmirnovFamilies}.
The polynomiality established in Section~\ref{gr:poly} then identifies
the limit with a leading coefficient, which we evaluate explicitly.

\subsection{Comparison with the first free multiplicity}\label{lim:section}
Fix $n\ge4$ and recall $d_n:=\dim(C_n)=2n-3$.
For an ideal $I$ primary to the homogeneous maximal ideal of $C_n$, write
\[
 e_{\mathrm{HK}}(I)
 =\lim_{e\to\infty}
   \frac{\ell(C_n/I^{[p^e]})}{p^{ed_n}}
\]
for its Hilbert--Kunz multiplicity. 

\begin{theorem}\label{lim:comparison}
For each fixed $n\ge4$, we have
\begin{equation}\label{lim:first-free-comparison}
 s_p(C_n)=\frac{a_1(C_n)}{p^{2n-3}}+O(p^{-1}) \qquad \text{and} \qquad
 \lim_{p\to\infty}s_p(C_n)=[p^{d_n}]a_1(C_n).
\end{equation}
The implied constant may depend on $n$, but is independent of $p$.
\end{theorem}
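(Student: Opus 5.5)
The plan is to express both $s_p(C_n)$ and $a_1(C_n)/p^{d_n}$ through colengths of Frobenius-type ideals, using that $C_n$ is Gorenstein, and then compare them with Smirnov's uniform Hilbert--Kunz estimate.

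\emph{Socle formula.} Let $\mathfrak m$ be the homogeneous maximal ideal of $C_n$ and $u$ a homogeneous socle generator modulo a fixed homogeneous system of parameters $\underline x$. For a Gorenstein local or graded ring, the splitting number is
\[
 a_e(C_n)=\ell\bigl(C_n/(\underline x^{[p^e]})\bigr)-\ell\bigl(C_n/(\underline x,u)^{[p^e]}\bigr).
\]
To see this, note that a map $F^e_*C_n\to C_n$ is surjective exactly when it does not land in $\mathfrak m$. By duality this is detected by whether $u^{p^e}$ lies in $(\underline x^{[p^e]})$. Counting free summands therefore becomes the length of $(\underline x^{[p^e]}):u^{p^e}$ modulo $(\underline x^{[p^e]})$, and that length equals the difference of colengths above. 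We have $\ell(C_n/(\underline x^{[p^e]}))=p^{ed_n}\,e(\underline x)$ exactly, since $C_n$ is Cohen--Macaulay. Writing $J=(\underline x,u)$, we get
\[
 \frac{a_e}{p^{ed_n}}=e(\underline x)-\frac{\ell(C_n/J^{[p^e]})}{p^{ed_n}}.
\]
Letting $e\to\infty$ gives $s_p=e(\underline x)-e_{\mathrm{HK}}(J)$. At $e=1$ we get $a_1/p^{d_n}=e(\underline x)-\ell(C_n/J^{[p]})/p^{d_n}$. Hence
\[
 s_p-\frac{a_1}{p^{d_n}}=\frac{\ell(C_n/J^{[p]})}{p^{d_n}}-e_{\mathrm{HK}}(J).
\]

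\emph{Choice of $\underline x$ and $J$.} We choose $\underline x$ as a fixed set of $2n-3$ linear forms in the Plücker coordinates over $\mathbb Z$, for example from a Noether normalization valid over $\mathbb Z[1/N]$. Then $u$ is a fixed form defined over $\mathbb Z$, and $C_n$, $\underline x$, $J$ all come from a single flat family over $\operatorname{Spec}\mathbb Z[1/N]$. This makes the reduction-mod-$p$ setting uniform.

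\emph{Uniform estimate (main obstacle).} I will invoke Smirnov's uniform Hilbert--Kunz estimate \cite{SmirnovFamilies}. It should give a constant $D$, independent of $p$ and $e$, with
\[
 \Bigl|\frac{\ell(C_n/J^{[p^{e}]})}{p^{ed_n}}-e_{\mathrm{HK}}(J)\Bigr|\le \frac{D}{p^{e}}.
\]
The point to check is that the constant in the Tucker/Polstra-type inequality $|\ell(M/I^{[q]}M)/q^d-\ell(M/I^{[qp]}M)/(qp)^d|\le D/q$ depends only on the family: the number of generators of $J$, the degrees, and a uniform bound on $\ell$ of Frobenius-torsion cokernels, obtained via a generic-freeness argument over $\mathbb Z[1/N]$. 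I would first try to cite Smirnov's family version directly. Failing that, I would reprove the estimate for graded rings using a fixed embedding $C_n^{\oplus p^{d_n}}\hookrightarrow F_*C_n$ with torsion cokernel annihilated by a fixed element $c$, whose degree is bounded independently of $p$. Taking $e=1$, so $q=p$, gives the first identity in \eqref{lim:first-free-comparison}.

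\emph{Conclusion.} For the limit, Theorem~\ref{gr:matrix} shows that $a_1(C_n)=A_n(p)_{C_n,C_n}$ is a polynomial of degree at most $d_n$. Therefore $a_1/p^{d_n}\to[p^{d_n}]a_1$, and the $O(p^{-1})$ error vanishes in the limit. This gives the second identity in \eqref{lim:first-free-comparison}.
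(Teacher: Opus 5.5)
Your proposal is correct and follows the paper's proof essentially step for step: the Gorenstein socle formula expressing $a_1$ and $s_p$ through the colengths of $(\underline x)^{[p]}$ and $J^{[p]}$ over a spread-out integral family, Smirnov's uniform $C/p$ Hilbert--Kunz estimate at $e=1$, and polynomiality of $a_1$ for the limit.

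There is one small slip. The free rank is the colength $\ell\bigl(C_n/((\underline x^{[p^e]}):u^{p^e})\bigr)$, not the length of $((\underline x^{[p^e]}):u^{p^e})/(\underline x^{[p^e]})$; the latter equals $\ell(C_n/J^{[p^e]})$. Your displayed formula is nonetheless correct. You should also say, as the paper does, why $u$ remains a socle generator in every fiber. For example, after inverting $N$ the quotient $C_n/(\underline x)$ is finite free over $\mathbb Z[1/N]$, and you can take $u$ to span its rank-one top-degree piece.
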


\begin{proof}
Let $\mathscr C_n$ be the integral Pl\"ucker algebra. Choose a
homogeneous system of parameters $x_1,\ldots,x_{d_n}$ in
$\mathscr C_n\otimes\mathbb Q$, and choose a homogeneous element $u$
whose class generates the socle modulo $Q=(x_1,\ldots,x_{d_n})$.
After inverting a nonzero integer $L$, these elements are defined over
$B=\mathbb Z[1/L]$. Enlarging $L$ makes $\mathscr C_n/Q$ finite free
over $B$ and ensures that the class of $u$ remains a nonzero socle
element in every fiber. Indeed, the finitely many equations expressing
annihilation by the degree-one generators hold after clearing
denominators, and the nonzero class of $u$ can be made part of a
$B$-basis. Each fiber is Gorenstein, so this class generates its socle.
Set $J=(Q,u)$.

For $p\nmid L$, put $R_p=\mathscr C_n\otimes_B\mathbb F_p$, and
write $Q_p,J_p,u_p$ for the reductions of these choices. The
Gorenstein socle formula \cite[Corollary~6.8(i)]{PolstraTucker},
which gives the Hilbert--Kunz description of $F$-signature from
\cite{HL}, yields
\begin{align}
 a_1(R_p)
 &=\ell\bigl(R_p/(Q_p^{[p]}:u_p^p)\bigr)
   =\ell(R_p/Q_p^{[p]})-\ell(R_p/J_p^{[p]}),
   \label{lim:socle-splitting}\\
 s(R_p)&=e_{\mathrm{HK}}(Q_p)-e_{\mathrm{HK}}(J_p).
   \label{lim:socle-signature}
\end{align}
The second equality in \eqref{lim:socle-splitting} follows from
multiplication by $u_p^p$ modulo $Q_p^{[p]}$. These formulas also show
that the quantities are unchanged upon extending the perfect ground
field, so they compute the multiplicity and signature of $C_n$.

The algebra $\mathscr C_n$ is flat over $B$, with geometrically
integral fibers of dimension $d_n$, and $\mathscr C_n/J$ is finite over
$B$. Thus $B\to\mathscr C_n$ with the ideal $J$ is an affine
$J$-family in the sense of \cite[Definition~3.8]{SmirnovFamilies}.
Smirnov's uniform estimate \cite[Corollary~4.9]{SmirnovFamilies}
therefore gives, after excluding finitely many further primes,
\[
 \left|e_{\mathrm{HK}}(J_p)
       -\frac{\ell(R_p/J_p^{[p]})}{p^{d_n}}\right|
 \le\frac{C}{p},
\]
where $C$ is independent of $p$. Since $Q_p$ is generated by a regular
sequence,
\[
 \frac{\ell(R_p/Q_p^{[p]})}{p^{d_n}}
 =\ell(R_p/Q_p)=e_{\mathrm{HK}}(Q_p).
\]
Subtracting \eqref{lim:socle-splitting}, divided by $p^{d_n}$, from
\eqref{lim:socle-signature} proves the comparison. Finally,
$a_1(C_n)$ is a polynomial in $p$ of degree at most $d_n$ by
Section~\ref{gr:poly}, so its normalized limit is its coefficient of
$p^{d_n}$.
\end{proof}

The comparison itself uses Gorensteinness and uniform Hilbert--Kunz
convergence, and does not require FFRT. The Frobenius decomposition supplies
the explicit polynomial $a_1(C_n)$ that we will use to evaluate the limit.

\subsection{The first free multiplicity and the limit}

\begin{proposition}\label{gr:first-free}
The first free multiplicity is the explicit polynomial
\begin{equation}\label{gr:a1}
 a_1(C_n)=C_{2n,p}\bigl(n(p-1)\bigr)
       -C_{2n,p}\bigl(n(p-1)+2\bigr)
       -(2^n-2)H_{n,0}(p).
\end{equation}
\end{proposition}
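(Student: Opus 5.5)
This is the $j=0$ case of \eqref{gr:SC}, so the plan is to specialize the covariant-source column of \S\ref{gr:cov} to the source $S\{0\}=C_n$ and unwind the definitions.

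First, $A_{S\{0\},S\{0\}}$ counts free summands of $F_*C_n$ with all internal shifts counted together. This is exactly $a_1(C_n)$ under the conventions of \S\ref{sec:preliminaries}. Equation~\eqref{gr:SC} with $j=0$ then gives
\[
 a_1(C_n)=a^{(0)}_0-(2^n-2)H_{n,0}(p).
\]
By \eqref{gr:weyl-coefficients} with $j=0$ and $\lambda=0\in\Lambda_n$,
\[
 a^{(0)}_0=W_n(0)-W_n(2),
\]
and by \eqref{gr:weight}, $W_n(w)=C_{2n,p}(n(p-1)+w)$. Substituting gives \eqref{gr:a1}.

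The one point needing care is that \eqref{gr:SC} was justified for $p\ge n-1$, with Lemma~\ref{gr:boundary} covering $p=n-2$. Both apply to $j=0$, since $T(0)$ is trivial and never Steinberg, given that $p\ge3$. The remaining check is that the Weyl coefficient $a^{(0)}_0$ is the difference $W_n(0)-W_n(2)$ of the weight multiplicities at $0$ and $2$. This holds because a Weyl-character coefficient at a dominant weight $\lambda$ is $\dim M_\lambda-\dim M_{\lambda+2}$.

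The main conceptual content is the derivation of \eqref{gr:SC} itself, namely the correction $-(2^n-2)H_{n,0}$. We rely on \cite[Proposition~5.1, Lemma~5.4]{RSVdB2} for it. The argument runs as follows. $U^{(0)}_0+U^{(0)}_{2p-2}=a^{(0)}_0$ counts $T(0)$ and $T(2p-2)$ summands of $Q$. Each $T(2p-2)$ contributes a free $S\{0\}$-type invariant piece, since $T(2p-2)^{G_1}$ contains a trivial Frobenius twist. On the other hand, the copies attached to the higher $G_1$-cohomology, which are counted by fusion, split off as $K\{r,r\}$ summands; with multiplicity $H_{n,0}$ for each of the $n-3$ diagonal types, together with rank bookkeeping, this accounts for the factor $2^n-2$.

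Since that identification is already established in the paper, the proof of the proposition is only the substitution above. As a consistency check, $n=4$ recovers $a_1=p(13p^4+2)/15$ from \S\ref{gr:example}.
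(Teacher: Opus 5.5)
Your proposal is correct and is essentially the paper's proof: set $j=0$ in \eqref{gr:SC}, use \eqref{gr:weyl-coefficients} with $\lambda=0$ and \eqref{gr:weight}, and invoke Lemma~\ref{gr:boundary} for $p=n-2$. Your heuristic paragraph on where the factor $2^n-2$ comes from is not needed and does not check out, so it is best dropped: for $n=4$ there is a single diagonal type $K\{1,1\}$, of rank $2$ and with resolution character $4\chi_0-\chi_1$, while the correction coefficient is $2^4-2=14$.
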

\begin{proof}
Set $j=0$ in \eqref{gr:SC} and use
\eqref{gr:weyl-coefficients} and \eqref{gr:weight}. The endpoint
characteristic is included by Lemma~\ref{gr:boundary}.
\end{proof}

Let $B_{2r}$ denote the Bernoulli numbers, with $B_2=1/6$.
Define
\[
 \gamma_n=
 \frac{2^n(2^{2n-2}-1)(-1)^nB_{2n-2}}{(2n-2)!}.
\]
\begin{corollary}\label{gr:limit}
For every fixed $n\ge4$,
\begin{equation}\label{gr:limit-formula}
 \lim_{p\to\infty}s(C_n)=
 -\frac{2}{(2n-3)!}
 \sum_{k=0}^{n-1}(-1)^k\binom{2n}{k}(n-k)^{2n-3}
 -(2^n-2)\gamma_n.
\end{equation}
In particular, for $n=4,5,6,7,8$ the
values are respectively
\[
 \frac{13}{15},\quad\frac{97}{126},\quad
 \frac{15067}{22680},\quad\frac{2983}{5280}, \quad  
 \frac{11677453}{24324300}.
\]
\end{corollary}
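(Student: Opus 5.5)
By Theorem~\ref{lim:comparison}, $\lim_{p\to\infty}s(C_n)$ is the coefficient of $p^{2n-3}$ in $a_1(C_n)$. Proposition~\ref{gr:first-free} writes $a_1(C_n)$ as three terms. I will extract the top coefficient of each term separately and then add them.

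\emph{The fusion term.} In the second line of \eqref{gr:H} with $q=0$, the exponent $2j+1$ equals $2n-3$ only when $m=n-1$ and $j=n-2$. From \eqref{gr:cosecant-coefficients}, $c_{n-1,n-2}=1/(2n-3)!$. To compute $b_{n,0,n-1}$, note that $\chi_0=1$, so \eqref{gr:b} reads $x^{1-n}-x^{2-n}$. Hence $b_{n,0,n-1}=1$. The top coefficient of $H_{n,0}$ is therefore
\[
 2^{2-n}\frac{(-1)^{n-2}2^{2n-2}(2^{2n-2}-1)B_{2n-2}}{(2n-2)(2n-3)!}
 =\frac{2^n(2^{2n-2}-1)(-1)^nB_{2n-2}}{(2n-2)!}=\gamma_n.
\]
This term contributes $-(2^n-2)\gamma_n$ to the limit.

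\emph{The weight terms.} I expect the bookkeeping here to be the main obstacle. Apply Lemma~\ref{gr:binomial} with $r=2n$ and $\alpha=n$. The index $n(p-1)$ corresponds to $\beta=-n$, and the index $n(p-1)+2$ to $\beta=-n+2$. Both values of $\beta$ lie in the allowed range since $n\ge4$. Each count becomes
\[
 \sum_{t=0}^{n-1}(-1)^t\binom{2n}{t}\binom{(n-t)p+\beta+2n-1}{2n-1}.
\]
Individually these have degree $2n-1$ in $p$, while the difference has degree at most $2n-3$. I will expand each binomial in $p$ and use
\[
 \binom{N+2}{2n-1}-\binom{N}{2n-1}=\binom{N+1}{2n-2}+\binom{N}{2n-2}.
\]
This difference is one degree lower, with leading term $2\,((n-t)p)^{2n-2}/(2n-2)!$. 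The $p^{2n-2}$ terms must cancel, because the total degree is at most $2n-3$ by Theorem~\ref{gr:matrix}. This forces $\sum_t(-1)^t\binom{2n}{t}(n-t)^{2n-2}=0$, which is a standard vanishing for alternating sums truncated at the symmetric middle.

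The surviving $p^{2n-3}$ coefficient comes from the subleading terms of these binomials. The subleading shift is $\beta+(2n-1)/2$ type data. I will compute it by writing $\binom{N+1}{2n-2}+\binom{N}{2n-2}$ as $\frac{2}{(2n-2)!}\big(N+\tfrac{1}{2}\cdot(\cdots)\big)^{2n-2}$ up to lower order. The result should reduce, after using the vanishing above, to $-\frac{2}{(2n-3)!}\sum_{k=0}^{n-1}(-1)^k\binom{2n}{k}(n-k)^{2n-3}$.

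As a safer alternative for this step, I would interpret the weight difference asymptotically. Over $p$, $W_n(w)$ equals $p^{2n-1}$ times the density at $w/p$ of a sum of $2n$ independent uniform variables on $[0,1]$, shifted by $n$. This density is the Irwin--Hall piecewise polynomial density. The difference $W_n(0)-W_n(2)$ is then $-2p^{2n-2}$ times its derivative at the center, plus corrections. The derivative vanishes by symmetry, so the top term comes from the discrete correction. I would cross-check the constant against $n=4$, where it must give $13/15+14\gamma_4$.

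\emph{Assembly.} Adding the weight contribution and the fusion contribution gives \eqref{gr:limit-formula}. The listed values for $n=4,\dots,8$ then follow by direct evaluation with $B_6=1/42$, $B_8=-1/30$, $B_{10}=5/66$, $B_{12}=-691/2730$ and $B_{14}=7/6$. As a check, $n=4$ should give $13/15$, matching Theorem~\ref{gr:example-thm}.
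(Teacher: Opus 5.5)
Your outline matches the paper's. Theorem~\ref{lim:comparison} reduces the limit to $[p^{2n-3}]a_1(C_n)$, and Proposition~\ref{gr:first-free} splits $a_1$ into a weight term and a fusion term. Your fusion computation is also the paper's: only $m=n-1$, $j=n-2$ reaches degree $2n-3$, with $b_{n,0,n-1}=1$ and $c_{n-1,n-2}=1/(2n-3)!$. You read it from the second line of \eqref{gr:H} rather than through $P_{n-1}$, which is the same thing.

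The only divergence is the weight term, and there you stop short. ``The result should reduce'' with an unspecified shift is a plan, not a computation. It does close quickly. Put $f(X)=\binom{X}{2n-1}$. Its roots $0,\dots,2n-2$ are symmetric about $n-1$, so $f$ is $\frac{1}{(2n-1)!}$ times an odd polynomial in $X-n+1$ with leading term $(X-n+1)^{2n-1}$. Hence
\[
 \binom{N+1}{2n-2}+\binom{N}{2n-2}=f(N+2)-f(N)=\frac{2}{(2n-2)!}\,(N-n+2)^{2n-2}+O\bigl(N^{2n-4}\bigr).
\]
With $N=(n-t)p+n-1$ you get $N-n+2=(n-t)p+1$. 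So the $p^{2n-3}$ coefficient is $\frac{2}{(2n-3)!}(n-t)^{2n-3}$. The overall sign in
\[
 C_{2n,p}(a)-C_{2n,p}(a+2)=-\sum_t(-1)^t\binom{2n}{t}\bigl(f(N+2)-f(N)\bigr)
\]
then gives the stated term directly. The even-power vanishing you cite is needed only to kill the $p^{2n-2}$ coefficient, not for this step.

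The paper avoids this bookkeeping. It uses the symmetry $C_{2n,p}(a-2)=C_{2n,p}(a+2)$ to write the first difference as $-\tfrac12\bigl(f(X+2)-2f(X)+f(X-2)\bigr)$ summed over $t$. In that second difference, degrees $2n-1$ and $2n-2$ cancel automatically, and the top term $\frac{4}{(2n-3)!}X^{2n-3}$ is read off from $4f''$. No vanishing identity or subleading coefficient is needed.

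Your Irwin--Hall alternative is the continuous form of the same idea; compare $M_n=-2f_n''(0)$ in Corollary~\ref{gr:large-n}. Be careful about which term survives. It is the second-order Taylor term $-2p^{2n-3}g''(n)$ of the continuous density $g$, not a lattice correction. The lattice-versus-continuum discrepancy enters $W_n$ only at order $p^{2n-3}$, and so enters the difference only at order $p^{2n-4}$.
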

\begin{proof}
By Theorem~\ref{lim:comparison}, the required limit is
$[p^{2n-3}]a_1(C_n)$. We compute this coefficient separately for the
two contributions in \eqref{gr:a1}.

First consider the bounded-composition terms. Put $a=n(p-1)$.
The symmetry $C_{2n,p}(a-2)=C_{2n,p}(a+2)$ gives
\[
 C_{2n,p}(a)-C_{2n,p}(a+2)
 =-\frac12\bigl(C_{2n,p}(a+2)-2C_{2n,p}(a)+C_{2n,p}(a-2)\bigr).
\]
For sufficiently large $p$, formula~\eqref{gr:C} gives, for
$h=-2,0,2$,
\[
 C_{2n,p}(a+h)
 =\sum_{k=0}^{n-1}(-1)^k\binom{2n}{k}
   \binom{(n-k)p+n-1+h}{2n-1}.
\]
To calculate the indicated combination, regard
$f(X)=\binom{X}{2n-1}$ as a polynomial in $X$. Expanding its three
translates shows that
\[
 f(X+2)-2f(X)+f(X-2)
 =\frac{4}{(2n-3)!}X^{2n-3}
   +\text{terms of degree at most }2n-4.
\]
Indeed, the terms in degrees $2n-1$ and $2n-2$ cancel, and the
displayed coefficient comes from the leading term
$X^{2n-1}/(2n-1)!$ of $f$. Substituting
$X=(n-k)p+n-1$, multiplying by $-1/2$, and summing gives
\[
 [p^{2n-3}]\bigl(C_{2n,p}(a)-C_{2n,p}(a+2)\bigr)
 =-\frac{2}{(2n-3)!}
   \sum_{k=0}^{n-1}(-1)^k\binom{2n}{k}(n-k)^{2n-3}.
\]

Next consider $H_{n,0}(p)$. Since $\chi_0=1$, equation~\eqref{gr:b}
has only the two nonzero coefficients
$b_{n,0,n-1}=1$ and $b_{n,0,n-2}=-1$. Thus \eqref{gr:H} becomes
\[
 H_{n,0}(p)=\frac{2^{2-n}}p
 \bigl(P_{n-1}(2p)-P_{n-1}(p)-P_{n-2}(2p)+P_{n-2}(p)\bigr).
\]
The terms involving $P_{n-2}$ have degree at most $2n-5$ after
division by $p$, so they do not contribute to $[p^{2n-3}]H_{n,0}$.
By \eqref{gr:cosecant-polynomial} and
$c_{n-1,n-2}=1/(2n-3)!$, the leading coefficient of $P_{n-1}(z)$ is
\[
 [z^{2n-2}]P_{n-1}(z)
 =\frac{(-1)^n2^{2n-2}B_{2n-2}}{(2n-2)!}.
\]
Consequently
\[
 [p^{2n-3}]H_{n,0}(p)
 =2^{2-n}(2^{2n-2}-1)
   \frac{(-1)^n2^{2n-2}B_{2n-2}}{(2n-2)!}
 =\gamma_n.
\]
Combining the two coefficients according to \eqref{gr:a1} proves
\eqref{gr:limit-formula}.
\end{proof}

\begin{corollary}\label{gr:large-n}
Write $s_\infty(C_n)=\lim_{p\to\infty}s(C_n)$. Then
\[
 s_\infty(C_n)=12\sqrt{\frac3\pi}\,n^{-3/2}
 \left(1-\frac{3}{8n}+O(n^{-2})\right)
 \qquad(n\to\infty).
\]
In particular, $\lim_{n\to\infty}s_\infty(C_n)=0$.
For illustration, evaluation of \eqref{gr:limit-formula} gives the
following values, rounded to eight decimal places:
\begin{center}
\begin{tabular}{@{}lrrrr@{}}
\toprule
$n$ & $10$ & $20$ & $50$ & $100$\\
$s_\infty(C_n)$ & $0.35473508$ & $0.12866237$ & $0.03291929$ & $0.01168254$\\
\bottomrule
\end{tabular}
\end{center}
\end{corollary}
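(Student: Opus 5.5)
The plan is to start from the explicit formula \eqref{gr:limit-formula} of Corollary~\ref{gr:limit} and obtain the asymptotics of its two terms separately. Write $s_\infty(C_n)=\Phi_n-(2^n-2)\gamma_n$, where
\[
 \Phi_n=-\frac{2}{(2n-3)!}\sum_{k=0}^{n-1}(-1)^k\binom{2n}{k}(n-k)^{2n-3}.
\]

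\emph{The Bernoulli term.} I would dispose of this first, since it is exponentially small. Substitute $|B_{2m}|=2(2m)!\zeta(2m)/(2\pi)^{2m}$ with $m=n-1$ into the definition of $\gamma_n$. This gives $|\gamma_n|\asymp 2^n\cdot 2^{2n-2}\cdot 2/(2\pi)^{2n-2}$, so that
\[
 (2^n-2)|\gamma_n| = O\bigl(16^n/(2\pi)^{2n}\bigr)=O\bigl((4/\pi^2)^n\bigr),
\]
which decays geometrically. It therefore contributes nothing to any power of $n^{-1}$.

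\emph{The composition term.} I would identify $\Phi_n$ with a density of an Irwin--Hall (uniform sum) distribution. Put $m=2n$ and consider $X=U_1+\cdots+U_m$ with $U_i$ uniform on $[0,1]$. The density of $X$ is
\[
 f_m(x)=\frac{1}{(m-1)!}\sum_k(-1)^k\binom mk(x-k)_+^{m-1}.
\]
Its second derivative at the center $x=n$ is
\[
 f_m''(n)=\frac{1}{(m-3)!}\sum_{k<n}(-1)^k\binom{2n}{k}(n-k)^{2n-3}.
\]
Hence $\Phi_n=-2f_{2n}''(n)$. This matches the derivation in the proof of Corollary~\ref{gr:limit}, where $\Phi_n$ is the leading coefficient of minus one half of a second difference, i.e.\ a second derivative, of the bounded-composition count.

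Now use Fourier inversion, writing $\varphi(t)=\bigl(\sin(t/2)/(t/2)\bigr)$ for the characteristic function of a centered uniform. Then
\[
 f_{2n}''(n)=-\frac1{2\pi}\int_{\mathbb R}t^2\varphi(t)^{2n}\,dt,
 \qquad\text{so}\qquad
 \Phi_n=\frac1\pi\int t^2\Bigl(\frac{\sin(t/2)}{t/2}\Bigr)^{2n}dt.
\]
This is a Laplace-type integral. Expand $\log\varphi(t)=-t^2/24-t^4/2880+O(t^6)$ and substitute $t=u\sqrt{12/n}$. The leading term is
\[
 \frac1\pi\Bigl(\frac{12}{n}\Bigr)^{3/2}\int u^2e^{-u^2}\,du
 =\frac1\pi\Bigl(\frac{12}{n}\Bigr)^{3/2}\frac{\sqrt\pi}{2}
 =12\sqrt{\frac3\pi}\,n^{-3/2},
\]
using $12^{3/2}/2=12\sqrt3$.

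\emph{The first correction.} This is the step I expect to be the main obstacle, since the constant must be tracked exactly. The quartic correction gives the factor
\[
 1-\frac{2n}{2880}t^4=1-\frac{2n}{2880}\cdot\frac{144}{n^2}u^4=1-\frac{u^4}{10n}.
\]
Its contribution relative to the leading term is $-\frac{1}{10n}\cdot\frac{\int u^6e^{-u^2}}{\int u^2e^{-u^2}}=-\frac{1}{10n}\cdot\frac{15}{4}=-\frac{3}{8n}$, as claimed. The $O(n^{-2})$ remainder and the tails $|t|>n^{-1/3}$, where $|\varphi|<1-c$ or decays like $|t|^{-2n}$, are handled by the standard Laplace-method bounds. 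The Bernoulli term is absorbed into this remainder.

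\emph{The table and the limit.} The numerical table follows by direct evaluation of \eqref{gr:limit-formula}. The limit $\lim_{n\to\infty}s_\infty(C_n)=0$ is immediate from the leading factor $n^{-3/2}$.
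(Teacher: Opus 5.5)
Your proposal is correct and follows the paper's proof essentially step for step. Both identify the first term of the limit formula as $-2f''$ of the density of a sum of $2n$ uniforms, use Fourier inversion to get a Laplace-type integral whose quartic $\log$-correction yields the factor $1-\tfrac{3}{8n}$, and use the Bernoulli--zeta identity to show $(2^n-2)\gamma_n=O\bigl((4/\pi^2)^n\bigr)$; the only differences are cosmetic normalizations (your uniforms on $[0,1]$ evaluated at $n$ with $t=u\sqrt{12/n}$, versus the paper's centered uniforms with $t\mapsto 2t$ and $t=u/\sqrt n$).
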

\begin{proof}
Let $M_n$ be the first term on the right of \eqref{gr:limit-formula},
and let $f_n$ be the density of the sum of $2n$ independent uniform
random variables on $[-1/2,1/2]$. The convolution formula for this
density gives $M_n=-2f_n''(0)$. Fourier inversion, followed by the
change of variable $t\mapsto2t$, yields
\[
 M_n=\frac{16}{\pi}\int_0^\infty
 t^2\left(\frac{\sin t}{t}\right)^{2n}\,dt.
\]
For every fixed $\delta>0$, the integral over $[\delta,\infty)$ is
exponentially small. Indeed, $|\sin t/t|$ is bounded there by a constant
less than one, and $t^2|\sin t/t|^8$ is integrable.
Near zero, $\log(\sin t/t)=-t^2/6+O(t^4)$.
Substituting $t=u/\sqrt n$ and using dominated convergence therefore gives
\[
 \lim_{n\to\infty}n^{3/2}M_n
 =\frac{16}{\pi}\int_0^\infty u^2e^{-u^2/3}\,du
 =12\sqrt{\frac3\pi}.
\]
Keeping one more term, $\log(\sin t/t)=-t^2/6-t^4/180+O(t^6)$, the same
substitution gives $(\sin t/t)^{2n}=e^{-u^2/3}\bigl(1-u^4/(90n)\bigr)+
O(n^{-2})$ in the relevant range, with a dominated error. Since
\[
 \int_0^\infty u^6e^{-u^2/3}\,du=\frac{135}4\int_0^\infty u^2e^{-u^2/3}\,du
 \qquad\text{and}\qquad \frac{135}{4}\cdot\frac1{90}=\frac38,
\]
this yields the factor $1-\frac{3}{8n}+O(n^{-2})$.
Finally, the Bernoulli--zeta identity
$(-1)^nB_{2n-2}=2(2n-2)!\zeta(2n-2)/(2\pi)^{2n-2}$ shows that
\[
 (2^n-2)\gamma_n
 =O\!\left(\left(\frac4{\pi^2}\right)^n\right),
\]
and this correction does not affect the asymptotic.
\end{proof}

\begin{remark}\label{gr:shideler}
Over a field of characteristic different from two, $C_4$ is the cone over
a quadric in six variables, which is defined by a sum of six squares. For
a sum of $2m$ squares, Theorem~24 and Corollary~22 of \cite{Shideler}
give the limit $F$-signature
$1+(-1)^m2^{2m}(2^{2m}-1)B_{2m}/(2m)!$; at $m=3$ this is $13/15$, the
value of \eqref{gr:limit-formula} at $n=4$, and we note that the two expressions come
from unrelated arguments.
\end{remark}

\section{Riemann--Roch reduction and the Frobenius spectrum}\label{gr:rr-reduction}
We first record the matrix consequence of the standard Frobenius
compatibility in Riemann--Roch \cite[(2.8)]{KuranoRR}; see also
\cite[Section~4]{KuranoOhta} for its use with FFRT summand classes.
Here $G_0(Y)$ is the Grothendieck group of coherent sheaves, and
$\operatorname{CH}_i(Y)$ is the Chow group of dimension-$i$ cycles.

\begin{proposition}[Riemann--Roch eigenrows]\label{rr:general}
Let $Y$ be a $d$-dimensional quasi-projective variety over a perfect
field of characteristic $p>0$, and let $\mathcal E_0,\ldots,\mathcal E_{h-1}$
be coherent sheaves with
$F_*\mathcal E_j\cong\bigoplus_i\mathcal E_i^{\oplus A_{ij}}$.
Let $\tau_i:G_0(Y)_{\mathbb Q}\to\operatorname{CH}_i(Y)_{\mathbb Q}$
be the dimension-$i$ Riemann--Roch component, and put
\[
 V_i=\operatorname{span}_{\mathbb Q}\{\tau_i([\mathcal E_j]):0\le j<h\},
 \qquad r_i=\dim_{\mathbb Q}V_i.
\]
For a basis $\ell_{i,1},\ldots,\ell_{i,r_i}$ of $V_i^*$, the rows
\[
 w_{i,a}=\bigl(\ell_{i,a}(\tau_i([\mathcal E_j]))\bigr)_{j=0}^{h-1}
 \quad\text{satisfy}\quad w_{i,a}A=p^iw_{i,a}.
\]
All $\sum_i r_i$ rows are linearly independent. Thus every $i$ with
$r_i>0$ gives an eigenvalue $p^i$ of geometric multiplicity at least
$r_i$, and different dimensions $i$ give distinct eigenvalues.
\end{proposition}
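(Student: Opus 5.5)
The plan is to use the Frobenius compatibility of the Riemann--Roch map \cite[(2.8)]{KuranoRR}. For a finite morphism, $\tau$ commutes with pushforward, so $\tau(F_*\alpha)=F_*\tau(\alpha)$ for $\alpha\in G_0(Y)_{\mathbb Q}$. We will use it in its standard form: if $\tau$ is normalized with respect to the Frobenius-twisted structure on the base, then the pushforward $F_*$ acts on $\operatorname{CH}_i(Y)_{\mathbb Q}$ as multiplication by $p^i$. The reason is that $F_*[V]=[k(V):k(V)^p]\,[V]=p^{\dim V}[V]$ for an $i$-dimensional subvariety $V$, since the ground field is perfect. We must also check that the Todd-class correction is compatible, which is exactly the content of Kurano's (2.8).

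Applying $\tau_i$ to the decomposition $[F_*\mathcal E_j]=\sum_iA_{ij}[\mathcal E_i]$ in $G_0(Y)$ gives
\[
 p^i\tau_i([\mathcal E_j])=\sum_{i'}A_{i'j}\,\tau_i([\mathcal E_{i'}])\in V_i .
\]
Evaluating this identity against $\ell_{i,a}$ yields $p^i(w_{i,a})_j=(w_{i,a}A)_j$. This is the eigenrow identity.

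For the independence claim, first fix $i$. Suppose $\sum_ac_a w_{i,a}=0$. Then the functional $\sum_ac_a\ell_{i,a}$ vanishes on every generator $\tau_i([\mathcal E_j])$ of $V_i$, hence on all of $V_i$. Since the $\ell_{i,a}$ form a basis of $V_i^*$, all $c_a=0$. So the rows belonging to a single $i$ are independent.

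For different $i$ the eigenvalues $p^i$ are distinct, because $p>1$. Left eigenvectors of $A$ for distinct eigenvalues are linearly independent, by the usual Vandermonde argument: apply powers of $A$ to a hypothetical relation. Hence the union of all $\sum_ir_i$ rows is independent. The geometric multiplicity statement is then immediate, since the $r_i$ independent rows all lie in the left eigenspace for $p^i$.

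The main obstacle is the precise normalization in the first paragraph. The absolute Frobenius is not $k$-linear, so we must verify that $\tau$ intertwines $F_*$ on $G_0$ with the scaling by $p^i$ on $\operatorname{CH}_i$. Doing so requires checking that the Todd class of $Y$ is preserved, which holds because $F^*$ multiplies codimension-$c$ classes by $p^c$ and the Todd class is compatible. The first approach would be to cite \cite[(2.8)]{KuranoRR} for this directly and to use \cite[Section~4]{KuranoOhta} for the FFRT setting.
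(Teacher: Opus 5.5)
Your proof is correct and matches the paper's: both cite the Frobenius compatibility $\tau_i(F_*\alpha)=p^i\tau_i(\alpha)$ from \cite[(2.8)]{KuranoRR}, apply $\ell_{i,a}\tau_i$ to the decomposition, get independence for fixed $i$ from the injectivity of evaluation $V_i^*\hookrightarrow\mathbb Q^h$, and get independence across dimensions from the distinctness of the eigenvalues $p^i$. Your side remark that $F^*$ scales codimension-$c$ classes by $p^c$ only makes sense when $Y$ is smooth (so that $F$ is flat); for a general quasi-projective $Y$ the citation carries the argument, exactly as it does in the paper.
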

\begin{proof}
Frobenius compatibility gives
$\tau_i(F_*\alpha)=p^i\tau_i(\alpha)$.
Applying $\ell_{i,a}\tau_i$ to the decompositions proves the eigenrow
identity. For fixed $i$, evaluation on the spanning set
$\{\tau_i([\mathcal E_j])\}_j$ embeds $V_i^*$ into $\mathbb Q^h$,
so these $r_i$ rows are independent. Since the scalars $p^i$ are
distinct, the rows from different dimensions are independent as well.
\end{proof}

For smooth $Y$, the class $\tau([\mathcal E])$ is
$\ch(\mathcal E)\operatorname{td}(T_Y)\cap[Y]$.
We now compute these classes on the punctured Pl\"ucker cone and prove
independence of the resulting eigenrows. This proves
Theorem~\ref{thm:B}(2), independently of Section~\ref{sig:section}. Put
\[
 d=2n-3,\qquad q=\lfloor n/2\rfloor,\qquad \text{and} \qquad
 m=n-2+\left\lceil\frac{(n-3)^2}{2}\right\rceil.
\]
Let $B=B_n(p)$ be the $m\times m$ matrix obtained from $A_n(p)$ by
recording total multiplicities on the duality orbits
\[
 S\{j\}^{\vee}=S\{j\},\qquad
 K\{j,k\}^{\vee}=K\{n-2-j,n-2-k\}.
\]
Thus each entry of $B$ is the sum over a target orbit for one source
representative, as in \eqref{eq:duality-quotient}. Here $\rho$ denotes
the rank row on these orbits. The theorem below supplies $q<m$
independent eigenrows, determining part of the spectrum of $B$.

\begin{theorem}\label{gr:rr-eigenrows}
Under the hypotheses of Theorem~\ref{gr:matrix}, $B$ has $q$
linearly independent rational left eigenrows $w_0,\ldots,w_{q-1}$,
independent of $p$, satisfying
\begin{equation}\label{gr:rr-eigenvalues}
 w_iB=p^{d-2i}w_i,\qquad w_0=\rho.
\end{equation}
They are given by
\begin{align}
 T_n(x)&=\left(\frac{x}{2\sinh(x/2)}\right)^{2n}
                 \left(\frac{\sinh x}{x}\right)^2,
 \label{gr:rr-todd}\\
 w_i(S\{j\})&=[x^{2i}]\,T_n(x)
                        \sum_{a=0}^j e^{(j-2a)x},
 \label{gr:rr-covariant}\\
 w_i(K\{j,k\})&=
 \sum_{a=k+1}^{n-1}(-1)^{a-k-1}\binom n{b_a}
                              w_i(S\{c_a\}),
 \label{gr:rr-syzygy}
\end{align}
where $(c_a,b_a)=(j-a,a)$ for $a\le j$, and
$(c_a,b_a)=(a-j-1,a+1)$ for $a>j$.
These values are constant on each duality orbit.
Consequently the normalized Perron eigenvector satisfies
\begin{equation}\label{gr:rr-constraints}
 w_0v=1,\qquad w_iv=0\quad(1\le i<q),
\end{equation}
and $s_p(C_n)$ is determined by a linear system with
$m-q=\binom{n-2}{2}$ unknowns.
\end{theorem}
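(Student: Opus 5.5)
We apply Proposition~\ref{rr:general} to $Y$, the punctured cone $\operatorname{Spec}C_n\setminus\{\mathfrak m\}$. It is smooth of dimension $d=2n-3$, and each $M\in\mathscr G_n$ gives a reflexive sheaf $\widetilde M$ on it. Since the graded decompositions of $F_*M$ restrict to $Y$, the matrix $A_n(p)$ is the matrix of Frobenius on the classes $[\widetilde M]$ up to line-bundle twists. On the punctured cone a twist by $\mathcal O(1)$ is trivial, because its first Chern class is a multiple of $u$ with $u^{\lfloor n/2\rfloor}=0$ after the relation coming from the cone is imposed. So shifts do not change $\tau$, and Proposition~\ref{rr:general} applies.

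The first step is to compute $\operatorname{CH}^*(Y)_{\mathbb Q}$. The punctured cone is a $\mathbb G_m$-bundle over $\Gr(2,n)$, so its Chow ring is $\operatorname{CH}^*(\Gr(2,n))_{\mathbb Q}/(\sigma_1)$. This quotient is generated by $u=\sigma_2$ (or $c_2$ of the tautological bundle), and I expect it to be $\mathbb Q[u]/(u^q)$. The relation $u^q\equiv0$ I would check from the Giambelli/Pieri relations $\sigma_{a}=0$ for $a>n-2$, after setting $\sigma_1=0$.

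Next comes the Todd class. The torsor description gives $T_Y$ stably as $F\otimes V$ minus $\mathfrak{sl}(V)$, using $Y=(\text{full-rank }F\otimes V)/\SL_2$. With Chern roots $\pm x/2$ of $V$ (since $c_1(V)=0$ and $c_2(V)=-x^2/4$ up to normalization $u$), this gives exactly $T_n(x)$ in \eqref{gr:rr-todd}. Here the factor $(x/(2\sinh(x/2)))^{2n}$ is the Todd class of $n$ copies of $V$, suitably symmetrized, and $(\sinh x/x)^2$ inverts the Todd class of the adjoint representation. The vector bundle underlying $S\{j\}$ is associated to $T(j)=S^jV$, whose Chern character is $\sum_a e^{(j-2a)x}$. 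This gives \eqref{gr:rr-covariant} as the coefficient of $u^i=x^{2i}$, which is even in $x$. For $K\{j,k\}$, the tail of \eqref{eq:defining-resolution} is a resolution by $T(c_a)\otimes\wedge^{b_a}F\otimes S$ over $Y$. Its alternating class in $G_0(Y)$ gives \eqref{gr:rr-syzygy}, with the twists again invisible. The component $\tau$ of codimension $2i$ lies in dimension $d-2i$, which gives the eigenvalue $p^{d-2i}$. Taking $w_0$ to be the rank gives $\rho$.

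Duality invariance comes next. On $Y$, $\tau(\mathcal E^\vee)$ is obtained from $\tau(\mathcal E)$ by $x\mapsto -x$ in $\ch$ together with a twist. Since the class is even in $x$ and twists are invisible, $w_i$ is constant on orbits, so the $w_i$ descend to eigenrows of $B$. Independence is the step I expect to be the main obstacle. By Proposition~\ref{rr:general} it suffices that $\dim V_{d-2i}=1$ for each $i<q$, i.e.\ that some $\tau_{d-2i}([S\{j\}])$ is nonzero. I would show this by a triangularity argument. The polynomials $\ch S\{j\}=\chi_j(e^x)$ for $0\le j\le q-1$ span the even power series modulo $x^{2q}$, and multiplication by the invertible series $T_n$ preserves this. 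Hence the $q\times q$ matrix $\bigl(w_i(S\{j\})\bigr)_{i,j<q}$ is invertible. Since $q-1\le n-3$, all of these sources lie in the family.

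Finally, \eqref{gr:rr-constraints} follows from $w_iv=p^{-(d-2i)}w_iBv=p^{2i-d}p^{d}w_iv$, which forces $w_iv=0$ for $i\ge1$, while $w_0v=\rho v=1$ by normalization. The Perron equation $Bv=p^dv$ then reduces, modulo these $q$ independent constraints, to a system in $m-q$ unknowns. The count $m-q=\binom{n-2}2$ is a direct check: $n-2+\lceil (n-3)^2/2\rceil-\lfloor n/2\rfloor=\binom{n-2}{2}$, which I would verify separately for $n$ even and $n$ odd.
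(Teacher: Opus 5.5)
Your proposal is essentially the paper's proof. The shared steps are: Proposition~\ref{rr:general} on the punctured cone with $\operatorname{CH}^*(U)_{\mathbb Q}=\mathbb Q[u]/(u^q)$; the Todd class of $n[\mathcal Q]-[\mathcal Q^\vee\otimes\mathcal Q]$, where your torsor description $F\otimes V-\mathfrak{sl}(V)$ gives the same class up to a trivial summand; descent of the exact resolution tail along the $\SL_2$-torsor on the full-rank locus; evenness of the Chern character for duality; and an invertible covariant minor, which the paper makes explicit as a Vandermonde determinant via $w_i(S\{j\})=(j+1)P_i((j+1)^2)$.

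Two local slips need fixing. First, shifts are invisible because $\mathcal O(1)$ is trivial on the affine punctured cone, so $c_1=0$ (indeed $\operatorname{CH}^1(U)_{\mathbb Q}=0$); your stated reason, that $c_1$ is a nilpotent multiple of $u$, would not give $e^{c_1}=1$. Second, the Chern roots must be $\pm x$ with $u=x^2=-c_2(\mathcal Q)$. Roots $\pm x/2$ would produce $T_n(x/2)$ and rows rescaled by $4^{-i}$, not the displayed formulas.
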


\begin{proof}
Write $X=\Gr(2,n)$ and let $\pi:U\longrightarrow X$ be the punctured
Pl\"ucker cone, the complement of the zero section in $\mathcal O_X(-1)$.
With $h=c_1(\mathcal O_X(1))$, localization for this line bundle gives
\begin{equation}\label{gr:rr-chow}
 \operatorname{CH}^*(U)_{\mathbb Q}
 =\operatorname{CH}^*(X)_{\mathbb Q}/(h)
 =\mathbb Q[u]/(u^q),\qquad \deg u=2.
\end{equation}
For the last equality use the rank-two quotient bundle $\mathcal Q$:
the usual presentation has generators $c_1(\mathcal Q),c_2(\mathcal Q)$
and relations the complete symmetric polynomials of degrees $n-1$ and $n$.
Setting $c_1(\mathcal Q)=h=0$ makes its formal roots $x,-x$, while
the relations become $u^q=0$ with $u=x^2=-c_2(\mathcal Q)$.
This is the same hyperplane quotient that computes the local cone's
Chow groups in \cite[Theorem~2.3]{KuranoSingh}.

Every module in $\mathscr G_n$ restricts to a vector bundle on $U$,
since it occurs in a Frobenius pushforward and $U$ is smooth.
For such a module $M$, define $w_i(M)$ to be the coefficient of $u^i$
in
\[
 \ch(M|_U)\operatorname{td}(T_U)
       \in\operatorname{CH}^*(U)_{\mathbb Q}.
\]
The relative tangent bundle of $\pi$ is trivial, so
$\operatorname{td}(T_U)=\pi^*\operatorname{td}(T_X)$.
From the tautological sequence and
$T_X=\mathcal R^{\vee}\otimes\mathcal Q$ one obtains in $K_0(X)$
\[
 [T_X]=n[\mathcal Q]-[\mathcal Q^{\vee}\otimes\mathcal Q].
\]
Modulo $h$, the roots of the second term are $0,0,2x,-2x$.
The Todd formula therefore gives \eqref{gr:rr-todd}.
The bundle associated with $S\{j\}$ is $\Sym^j\mathcal Q$, up to
grading normalization \cite[Proposition~11.2]{RSVdB2}, and its character
gives \eqref{gr:rr-covariant}.

For completeness, the syzygy formula requires exactness only on $U$.
The full-rank locus of $n\times2$ matrices is a principal
$\SL_2$-bundle over $U$. Restrict the equivariant resolution
\cite[(3.5), (3.7)]{RSVdB2} to this locus and descend its exact tail
resolving $\Omega^{k+1}M_j$. Descent along a torsor is exact, and
its term in homological degree $a$ is, after forgetting a graded
shift, $\binom n{b_a}$ copies of the bundle for $S\{c_a\}$.
Graded shifts are trivial after restriction to $U$. Equivalently,
line bundle twists disappear in \eqref{gr:rr-chow}.
Taking the alternating class of this tail proves
\eqref{gr:rr-syzygy}. All the symmetric or divided powers involved
have degree at most $n-3<p$, so this uses precisely the stated
characteristic bound. No exactness of the global invariant functor
is being asserted.

Applying Proposition~\ref{rr:general} on $U$ to the dimension-$(d-2i)$
components gives \eqref{gr:rr-eigenvalues} before aggregation.
Ordinary module duality restricts to bundle duality on $U$.
Only even codimensions survive in \eqref{gr:rr-chow}, and bundle
duality fixes these Chern-character components. Hence the rows are
constant on duality orbits and descend to $B$.

Finally,
\[
 \sum_{a=0}^j e^{(j-2a)x}
       =\frac{\sinh((j+1)x)}{\sinh x}
\]
shows that $w_i(S\{j\})=(j+1)P_i((j+1)^2)$, where $P_i$ has degree
$i$ and leading coefficient $1/(2i+1)!$. The minor on the columns
$S\{0\},\ldots,S\{q-1\}$ is consequently
\[
 \det\bigl(w_i(S\{j\})\bigr)_{0\le i,j<q}
 =\frac{\displaystyle\prod_{a=1}^q a
               \prod_{1\le a<b\le q}(b^2-a^2)}
             {\displaystyle\prod_{i=0}^{q-1}(2i+1)!}\ne0,
\]
which proves independence. Applying the rows to $Bv=p^dv$ and using
rank normalization gives \eqref{gr:rr-constraints}. Eliminating these
$q$ covariant coordinates gives the asserted number of unknowns.
\end{proof}

Here is an explicit form of the resulting smaller system. Let $W$ have
rows $w_i$, let $E$ index $S\{0\},\ldots,S\{q-1\}$, and let $J$ index
the remaining duality orbits. Partition $W=(W_E\;W_J)$ and $B$
accordingly, and put $e=(1,0,\ldots,0)^t\in\mathbb Q^q$. Set
\begin{equation}\label{gr:rr-residual}
 H=B_{JJ}-B_{JE}W_E^{-1}W_J,\qquad
 b=B_{JE}W_E^{-1}e,\qquad M=p^dI-H.
\end{equation}
Then the coordinates $z=v_J$ satisfy
\begin{equation}\label{gr:rr-signature}
 Mz=b,\qquad
 s_p(C_n)=e^tW_E^{-1}(e-W_Jz).
\end{equation}
Indeed $v_E=W_E^{-1}(e-W_Jz)$, and the $J$-rows of the eigenvector
equation give \eqref{gr:rr-residual}. The other rows follow from
$WB=\operatorname{diag}(p^{d-2i})W$ and the invertibility of $W_E$.
In coordinates $(Wv,v_J)$ the matrix $B$ is block triangular, and so
\begin{equation}\label{gr:rr-factorization}
 \det(tI-B)=
 \prod_{i=0}^{q-1}(t-p^{d-2i})\det(tI-H).
\end{equation}
The simplicity of the Perron root implies $\det M\ne0$. Thus
\eqref{gr:rr-signature}, or Cramer's rule applied to it, is an exact
uniform formula of size $\binom{n-2}{2}$. This gives sizes $1,3,6$
for $n=4,5,6$. It does not assert that this size is minimal or supply
a scalar evaluation of the remaining determinant for arbitrary values of $n$.

\subsection{The full characteristic polynomial}\label{gr:characteristic}
The quotient calculation above accounts for $q$ eigenvalues. We now
construct the remaining characteristic factors directly from the
bounded-composition atoms $E_{j;s,e}$ of \eqref{gr:E}, without first
assembling $A_n(p)$. Put $\nu=n-3$ and let $\mathcal V_n$ be the rational
vector space freely generated by $e_a=e_{S\{a\}}$ and
$f_{jk}=e_{K\{j,k\}}$. Let $D$ be the linear involution induced by
module duality: it fixes the $e_a$ and sends $f_{jk}$ to
$f_{\bar j,\bar k}$, where $\bar j=\nu+1-j$. By
\eqref{gr:duality}, it commutes with $A_n(p)$.

Using the resolution \eqref{eq:defining-resolution} and its indices
$(c_a,b_a)$ in \eqref{gr:rr-syzygy}, set
\begin{equation}\label{gr:boundary-classes}
 t_{jk}=\sum_{a=k+1}^{n-1}(-1)^{a-k-1}\binom n{b_a}e_{c_a},
 \qquad x_{jk}=f_{jk}-t_{jk}\qquad(1\le j,k\le\nu).
\end{equation}
Exactness of the tail on the punctured cone, proved in
Theorem~\ref{gr:rr-eigenrows}, places every $x_{jk}$ in
$\mathcal N_n=\ker(\mathcal V_n\longrightarrow K_0(U)_{\mathbb Q})$.
These are formal signed classes, and their coordinates need not be
nonnegative.

\begin{lemma}\label{gr:boundary-bases}
The duality eigenspaces $\mathcal N_n^+$ and $\mathcal N_n^-$ have bases
\begin{align*}
 \mathcal B^+&=(y_{jk}:1\le k\le j\le\nu),
 &y_{jk}&=\tfrac12(x_{jk}+Dx_{jk}),\\
 \mathcal B^-&=(a_{jk}:1\le k<j\le\nu)\cup
 (a_{jj}:1\le j\le\lfloor\nu/2\rfloor),
 &a_{jk}&=x_{jk}-Dx_{jk}=f_{jk}-f_{\bar j,\bar k}.
\end{align*}
Their dimensions are $d_+=\nu(\nu+1)/2$ and
$d_-=\lfloor\nu^2/2\rfloor$, respectively.
\end{lemma}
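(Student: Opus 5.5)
The plan is to compute $\dim\mathcal N_n$ from the rank of the class map, split $\mathcal N_n$ into duality eigenspaces, and then check that $\mathcal B^\pm$ lie in $\mathcal N_n^\pm$, are linearly independent, and have the right number of elements.

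\textbf{Step 1: $\dim\mathcal N_n$.} Put $\kappa\colon\mathcal V_n\to K_0(U)_{\mathbb Q}$ for the class map. Via the Chern character, $K_0(U)_{\mathbb Q}\cong\operatorname{CH}^*(U)_{\mathbb Q}=\mathbb Q[u]/(u^q)$ by \eqref{gr:rr-chow}, which has dimension $q$. The minor on $S\{0\},\dots,S\{q-1\}$ in the proof of Theorem~\ref{gr:rr-eigenrows} is nonzero, so $\kappa$ is surjective. Since $\dim\mathcal V_n=\nu+1+\nu^2$, this gives
\[
\dim\mathcal N_n=\nu^2+\nu+1-q .
\]

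\textbf{Step 2: duality and $\mathcal N_n^-$.} Module duality restricts to bundle duality on $U$. Dualizing a bundle multiplies the codimension-$c$ component of its Chern character by $(-1)^c$, and only the even codimensions $2i$ survive in \eqref{gr:rr-chow}. Hence $\kappa\circ D=\kappa$. Two consequences follow:
\begin{itemize}
\item $D$ preserves $\mathcal N_n$, so $\mathcal N_n=\mathcal N_n^+\oplus\mathcal N_n^-$;
\item every anti-invariant vector lies in $\ker\kappa$, so $\mathcal N_n^-=\mathcal V_n^-$.
\end{itemize}
Since $D$ fixes each $e_a$, we have $Dt_{jk}=t_{jk}$, and therefore $a_{jk}=x_{jk}-Dx_{jk}=f_{jk}-f_{\bar j,\bar k}$.

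Now $\mathcal V_n^-$ has a basis indexed by the non-fixed orbits $\{(j,k),(\bar j,\bar k)\}$. Because $k<j$ if and only if $\bar k>\bar j$, each off-diagonal orbit contains exactly one pair with $k<j$. A diagonal orbit $\{(j,j),(\bar j,\bar j)\}$ is non-fixed exactly when $j\neq\bar j$, and we pick the representative with $j<\bar j$, i.e.\ $j\le\lfloor\nu/2\rfloor$. So $\mathcal B^-$ is a basis of $\mathcal N_n^-$, and
\[
d_-=\tfrac{\nu(\nu-1)}2+\lfloor\nu/2\rfloor=\lfloor\nu^2/2\rfloor .
\]
Subtracting, $\dim\mathcal N_n^+=\lceil\nu^2/2\rceil+\nu+1-q$. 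Checking the parities of $\nu$ with $q=\lfloor(\nu+3)/2\rfloor$ shows this equals $\nu(\nu+1)/2=|\mathcal B^+|$.

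\textbf{Step 3: independence of $\mathcal B^+$ (the main step).} Each $y_{jk}$ lies in $\mathcal N_n^+$ because $x_{jk}\in\mathcal N_n$ and $D$ preserves $\mathcal N_n$. Since $t_{jk}$ is $D$-fixed,
\[
y_{jk}=\tfrac12\bigl(f_{jk}+f_{\bar j,\bar k}\bigr)-t_{jk}.
\]
The $f$-parts are orbit sums. For $k<j$ they are distinct over distinct orbits, with no coincidences among themselves. The only coincidences among $k\le j$ are the pairs $y_{jj},y_{\bar j\bar j}$ with $j<\bar j$, which share the same $f$-part. So independence of $\mathcal B^+$ reduces to independence, inside $\operatorname{span}(e_a)$, of the differences
\[
y_{jj}-y_{\bar j\bar j}=t_{\bar j\bar j}-t_{jj},\qquad 1\le j\le\lfloor\nu/2\rfloor .
\]

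\textbf{Step 4: a triangularity argument.} For $k=j$, every $a\ge j+1$ falls into the branch $a>j$ of \eqref{gr:rr-syzygy}. Thus
\[
t_{jj}=\sum_{a=j+1}^{n-1}(-1)^{a-j-1}\binom n{a+1}e_{a-j-1}.
\]
Its top term comes from $a=n-1$: it is $\pm e_{n-2-j}$ with coefficient $\binom nn=1$. Likewise $t_{\bar j\bar j}$ has top index $n-2-\bar j=j$. For $j<\bar j$ we have $j<n-2-j$, so the difference $t_{\bar j\bar j}-t_{jj}$ has leading term $\pm e_{n-2-j}$. These leading indices are distinct as $j$ varies over $1\le j\le\lfloor\nu/2\rfloor$, so the differences are independent by triangularity. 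This shows $\mathcal B^+$ is independent; with the count from Step 2, it is a basis of $\mathcal N_n^+$. The boundary bookkeeping for the top index $a=n-1$ and the branch choice in \eqref{gr:rr-syzygy} is where I expect care is needed.
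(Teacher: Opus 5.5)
Your proposal is correct and follows essentially the same route as the paper. The steps match: surjectivity onto the $q$-dimensional quotient with trivial duality action gives the dimensions by orbit counting; $\mathcal B^-$ is independent because of its disjoint antisymmetric syzygy coordinates; and independence of $\mathcal B^+$ reduces to the differences $t_{\bar j,\bar j}-t_{jj}$, whose distinct highest covariant indices $n-2-j>j$ carry coefficient $\pm1$. Your explicit observation that $\mathcal N_n^-=\mathcal V_n^-$, together with the parity check on $\dim\mathcal N_n^+$, spells out what the paper compresses into ``counting the duality orbits.''
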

\begin{proof}
Theorem~\ref{gr:rr-eigenrows} shows that the covariants span the
$q$-dimensional quotient and that duality acts trivially there.
Counting the duality orbits in $\mathcal V_n$ therefore gives the
stated dimensions. The negative vectors have disjoint antisymmetric
syzygy coordinates. For the positive vectors, the strictly
lower-triangular pairs likewise have distinct syzygy coordinates.
The only repeated diagonal coordinates are those of $y_{jj}$ and
$y_{\bar j,\bar j}$, whose difference is
$t_{\bar j,\bar j}-t_{jj}$. For $1\le j\le\lfloor\nu/2\rfloor$,
these differences have distinct highest covariant indices $n-j-2>j$
with nonzero coefficients, by \eqref{gr:boundary-classes}. Both
families are consequently independent, and their sizes prove the claim.
\end{proof}

For $1\le k\le j\le\nu$, let $\mathcal E_{jk}(p)$ be the column
obtained by retaining only the $E$-atoms in the linear reconstruction
of Appendix~\ref{gr:appendix}. Precisely, omit every $W_n$ and $H_{n,r}$
term in \eqref{gr:T}--\eqref{gr:KS}, but retain the cohomology assembly,
Jordan reconstruction, Euler induction, and covariant tail subtraction.
The resulting finite formula is recorded in \eqref{gr:boundary-vector}.

\begin{lemma}[Boundary transfer]\label{gr:boundary-transfer}
For $1\le k\le j\le\nu$, we have
\begin{equation}\label{gr:boundary-action}
 A_n(p)x_{jk}=\mathcal E_{jk}(p).
\end{equation}
This equality includes every covariant output coordinate.
\end{lemma}

\begin{proof}
Since $x_{jk}=f_{jk}-t_{jk}$, the required identity is equivalent to
\[
 A_n(p)f_{jk}=A_n(p)t_{jk}+\mathcal E_{jk}(p).
\]
Every operation in the column reconstruction
\eqref{gr:T}--\eqref{gr:KS} is linear in the $W_n$, $H_{n,s}$, and
$E_{j;q,e}$ inputs. Let $\mathcal G_{jk}(p)$ be the column obtained by
omitting the $E$-inputs and retaining the $W_n$ and $H_{n,s}$ inputs.
By definition of $\mathcal E_{jk}$,
\[
 A_n(p)f_{jk}=\mathcal G_{jk}(p)+\mathcal E_{jk}(p).
\]
We will show that $\mathcal G_{jk}(p)=A_n(p)t_{jk}$. Both columns
include the covariant outputs obtained after subtracting the syzygy
characters.

\smallskip\noindent
\emph{Matching the weight and fusion inputs.}
The Borel numerator of the syzygy source is
\[
 \mathcal K_{jk}(z)
 =\sum_{b=k+1}^{n}(-1)^{b-k-1}\binom nb z^{j-b}.
\]
Write $\mathcal W_p(P)$ for the bounded-weight contribution to the
induced character obtained from a numerator $P(z)$ by the projection,
Jordan reconstruction, and Euler induction of \S\ref{gr:ks}.
Put $w=z^p$. The reflection identities are
\[
 \mathcal W_p(z^{-a-2})=-\mathcal W_p(z^a),\qquad
 \mathcal W_p(z^{-1})=0\qquad(a\ge0).
\]
To see the first identity, use the symmetry of $\ch Q$ to note that
$(1-z^2)(z^a+z^{-a-2})\ch Q$ changes sign under $z\mapsto z^{-1}$.
Projection to weights divisible by $p$ preserves this property.
After division by $1-w^2$, the coefficients at $w^l$ and
$w^{-l-2}$ agree. These paired terms cancel under Euler induction,
which sends them to opposite characters; weight $-1$ contributes zero.
The same argument applied to $(1-z^2)z^{-1}\ch Q$ proves the second
identity.

In $\mathcal K_{jk}$, the term with $b=j+1$ therefore contributes
zero. Reflecting the terms with $b\ge j+2$ and retaining those with
$b\le j$ gives
\[
 \mathcal W_p(\mathcal K_{jk})
 =\sum_{a=k+1}^{n-1}(-1)^{a-k-1}\binom n{b_a}
    \mathcal W_p(z^{c_a}),
\]
with exactly the indices $(c_a,b_a)$ defining $t_{jk}$ in
\eqref{gr:boundary-classes}.
Likewise, reindexing \eqref{gr:B} gives
\[
 B_{j,k}
 =\sum_{a=k+1}^{n-1}(-1)^{a-k-1}\binom n{b_a}H_{n,c_a}.
\]
After omitting the $E$-terms, equations~\eqref{gr:T}--\eqref{gr:O}
give normalized Tate rows $B_{j,k}$ and $wB_{j,k}$, and syzygy
multiplicity $B_{j,k}$ on every diagonal target, with zero on the
other syzygy targets. Thus both kinds of input have the alternating
coefficients of the covariant tail.

\smallskip\noindent
\emph{Identifying these inputs with actual covariant columns.}
We must also justify that Euler induction computes the actual
covariant character, rather than a difference involving higher
induction. Fix $0\le s\le\nu\le p-1$ and put
$L_s=(s\omega)\otimes S$. Tensor identity and Kempf vanishing give
\[
 R\Ind_B^G L_s=T(s)\otimes S.
\]
Twisting \cite[Lemma~10.2 and Proposition~10.3]{RSVdB1} by $s\omega$
selects fusion labels $s$ and $p-2-s$ in even and odd Tate degree.
Their multiplicities agree by Lemma~\ref{gr:fusion}, giving normalized
Tate numerators $H_{n,s}$ and $wH_{n,s}$ in degrees zero and one.
The corresponding normalized weights are dominant, so their first
higher inductions vanish. For $s=p-1$, both Tate terms vanish.

In the remainder of this paragraph, write
$\mathsf I=\Ind_{B^{(1)}}^{G^{(1)}}$.
The ordinary comparison of \cite[Proposition~B.5]{RSVdB1} identifies
$(T(s)\otimes S)^{G_1}$ with $\mathsf I(L_s^{B_1})$.
Since $G/B=\mathbb P^1$ and hence $R^2\mathsf I=0$, its degree-one
exact sequence is
\[
 0\longrightarrow R^1\mathsf I(L_s^{B_1})
 \longrightarrow H^1(G_1,T(s)\otimes S)
 \longrightarrow \mathsf I H^1(B_1,L_s)
 \longrightarrow0.
\]
The Tate comparison in the same proposition, together with
$R^1\mathsf I\widehat H^0(B_1,L_s)=0$, gives
\[
 \widehat H^1(G_1,T(s)\otimes S)
 \cong\mathsf I\widehat H^1(B_1,L_s).
\]
Ordinary and Tate cohomology agree in degree one. The middle and
right-hand terms of the exact sequence therefore have equal dimensions
in each finite internal degree, forcing
$R^1\mathsf I(L_s^{B_1})=0$.
Consequently Euler induction computes the actual covariant character
from numerator $z^s$ and Tate numerators $H_{n,s}$ and $wH_{n,s}$.

\smallskip\noindent
\emph{Assembling the columns.}
For each covariant source $S\{s\}$, the diagonal syzygy multiplicities
are $H_{n,s}$. Subtracting their resolution characters from the induced
character is the same linear operation as the subtraction in
\eqref{gr:KS}. The matching weight and fusion inputs above therefore
give, in every output coordinate,
\[
 \mathcal G_{jk}(p)
 =\sum_{a=k+1}^{n-1}(-1)^{a-k-1}\binom n{b_a}A_n(p)e_{c_a}
 =A_n(p)t_{jk}.
\]
Subtracting this identity from
$A_n(p)f_{jk}=\mathcal G_{jk}(p)+\mathcal E_{jk}(p)$ proves the lemma.
The same comparison and Lemma~\ref{gr:boundary} include the endpoint
characteristic.
\end{proof}

For a vector in either residual space, write $[\,\cdot\,]_{\mathcal B^\pm}$
for its coordinates in the displayed basis. Define matrices by their
source columns:
\begin{equation}\label{gr:boundary-matrices}
 (H_n^+)_{\bullet,(j,k)}=
 \left[\frac{\mathcal E_{jk}+D\mathcal E_{jk}}2\right]_{\mathcal B^+}
 \qquad \text{and} \qquad
 (H_n^-)_{\bullet,(j,k)}=
 [\mathcal E_{jk}-D\mathcal E_{jk}]_{\mathcal B^-}.
\end{equation}
These are fixed rational coordinate operations. If $P^\pm$ is the
matrix of the corresponding basis in $\mathcal V_n$, its left inverse
$((P^\pm)^tP^\pm)^{-1}(P^\pm)^t$ extracts the coordinates.
For $H_n^+$ the covariant coordinates must be retained, since the
paired diagonal basis vectors share the same syzygy coordinates.

\begin{theorem}\label{gr:characteristic-polynomial}
For every $n\ge4$ and prime $p\ge\max\{n-2,3\}$, we have
\begin{equation}\label{gr:full-factorization}
 \det(tI-A_n(p))=
 \prod_{i=0}^{\lfloor n/2\rfloor-1}(t-p^{2n-3-2i})\,
 P_n^+(t,p)P_n^-(t,p),
 \qquad \text{with} \qquad P_n^\pm(t,p)=\det(tI-H_n^\pm(p)).
\end{equation}
The matrices $H_n^\pm(p)$ have sizes $d_+,d_-$ and entries in
$\mathbb Q[p]$ of degree at most $n-1$. Every entry is obtained from
the $n$-variable counts $C_{n,p}$ by fixed rational linear operations
with ranges depending only on $n$. Empty determinants are one.
\end{theorem}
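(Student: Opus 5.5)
We will realize $\det(tI-A_n(p))$ through a filtration of $\mathcal V_n$ that $A=A_n(p)$ preserves, and read off the three factors from the graded pieces. The subspace is $\mathcal N_n=\ker(\mathcal V_n\to K_0(U)_{\mathbb Q})$. Frobenius pushforward induces an endomorphism of $K_0(U)_{\mathbb Q}$, and $A$ represents it on formal classes. Hence $A$ preserves $\mathcal N_n$ and induces a map on the quotient $\mathcal V_n/\mathcal N_n$. By Theorem~\ref{gr:rr-eigenrows}, the image of $\mathcal V_n$ in $K_0(U)_{\mathbb Q}$, followed by the Chern character into $\operatorname{CH}^*(U)_{\mathbb Q}=\mathbb Q[u]/(u^q)$, has dimension exactly $q$. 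The covariants $e_0,\dots,e_{q-1}$ span it, since the minor computed there is nonzero and the classes land in a $q$-dimensional space. On this quotient, Frobenius acts diagonally with eigenvalues $p^{d-2i}$ by Proposition~\ref{rr:general}. So the quotient contributes $\prod_{i<q}(t-p^{2n-3-2i})$, and $\dim\mathcal N_n=\dim\mathcal V_n-q$. This matches $d_++d_-$, since $(n-2)+\nu^2-q=\nu(\nu+1)/2+\lfloor\nu^2/2\rfloor$, which we will check by parity of $n$.

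Next we split $\mathcal N_n$. Because $D$ commutes with $A$ by \eqref{gr:duality}, and $D$ preserves $\mathcal N_n$ (bundle duality on $U$), we get $\mathcal N_n=\mathcal N_n^+\oplus\mathcal N_n^-$ with both pieces $A$-stable. Then $\det(tI-A|_{\mathcal N_n})=P^+P^-$ once we show that $H_n^\pm$ are the matrices of $A|_{\mathcal N_n^\pm}$ in the bases of Lemma~\ref{gr:boundary-bases}.

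For the positive part, Boundary transfer (Lemma~\ref{gr:boundary-transfer}) and $AD=DA$ give
\[
 Ay_{jk}=\tfrac12\bigl(\mathcal E_{jk}+D\mathcal E_{jk}\bigr).
\]
For the negative part, the same reasoning gives $Aa_{jk}=\mathcal E_{jk}-D\mathcal E_{jk}$. This holds for all $k\le j$, and the diagonal $a_{jj}$ with $j\le\lfloor\nu/2\rfloor$ are covered since $k=j$ is allowed. In each case the right side lies in $\mathcal N_n^\pm$ because the left side does. Therefore taking coordinates in $\mathcal B^\pm$, as in \eqref{gr:boundary-matrices}, gives exactly the matrix of the restriction. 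For $\mathcal B^+$ we must use the covariant coordinates as well, since the diagonal basis vectors share their syzygy coordinates; this is the reason for the left-inverse extraction.

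There are two subtleties. First, $a_{\bar j\bar j}=-a_{jj}$, so the columns indexed by diagonal $(j,j)$ with $j>\nu/2$ are redundant. When $\nu$ is odd, the middle index has $a_{jj}=0$. We therefore use only the columns indexed by the basis, so $H_n^-$ is square of size $d_-$. Second, the degree and provenance claims need checking. Each $\mathcal E_{jk}$ is a fixed rational linear combination of the atoms $E_{j;q,e}$, which are values of $C_{n,p}$ at arguments $\alpha p+\beta$ with $\beta\in[-n+1,-1]$. By the $r=n$ case of Lemma~\ref{gr:binomial}, each such value is a polynomial in $p$ of degree at most $n-1$, valid throughout the prime range. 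The coordinate extraction is a fixed rational operation, so the entries of $H_n^\pm$ have degree at most $n-1$.

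The main obstacle is the claim that the quotient $\mathcal V_n/\mathcal N_n$ is exactly $q$-dimensional, together with the fact that the induced map is semisimple with the eigenvalues $p^{d-2i}$. This requires that the Chern character be injective on $K_0(U)_{\mathbb Q}$ restricted to our span. That holds rationally, since $\ch$ gives an isomorphism $K_0(U)_{\mathbb Q}\cong\operatorname{CH}^*(U)_{\mathbb Q}$ for smooth $U$. It also requires that Frobenius act on the codimension-$2i$ graded piece by the scalar $p^{d-2i}$ after the Todd twist. We will take both from Riemann--Roch compatibility, \cite[(2.8)]{KuranoRR}. The endpoint $p=n-2$ is already included in Lemma~\ref{gr:boundary-transfer}.
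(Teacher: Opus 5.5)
Your proposal is correct and follows essentially the paper's argument: $A_n(p)$ preserves $\mathcal N_n$, the quotient $\mathcal V_n/\mathcal N_n$ carries the Riemann--Roch weights $p^{2n-3-2i}$, and boundary transfer together with $AD=DA$ identifies the restrictions of $A_n(p)$ to $\mathcal N_n^\pm$ with $H_n^\pm$. The degree bound then comes from the $r=n$ case of Lemma~\ref{gr:binomial}. You also make explicit several points the paper's terse proof leaves implicit: the $K_0(U)_{\mathbb Q}$ argument for $A$-stability of $\mathcal N_n$, the rational injectivity of the Riemann--Roch map, which gives a quotient of dimension exactly $q$, and the redundancy $a_{\bar j\bar j}=-a_{jj}$ that fixes the column indexing of $H_n^-$.
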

\begin{proof}
Boundary transfer and duality identify \eqref{gr:boundary-matrices}
with the restrictions of $A_n(p)$ to $\mathcal N_n^\pm$.
The quotient has the Frobenius weights of
Theorem~\ref{gr:rr-eigenrows}. A basis adapted to $\mathcal N_n$
makes $A_n(p)$ block triangular, proving \eqref{gr:full-factorization}.
Each atom $E_{j;s,e}=C_{n,p}((s+e)p-j-2e)$ is a polynomial of degree
at most $n-1$ by Lemma~\ref{gr:binomial}. Every subsequent operation
in \eqref{gr:boundary-vector} and \eqref{gr:boundary-matrices} is
independent of $p$, proving the degree bound.
\end{proof}

Under the map that sums coordinates over duality orbits,
$\mathcal N_n^+$ identifies with the kernel of the matrix $W$ above.
Thus $H_n^+$ and the matrix $H$ of \eqref{gr:rr-residual} are related
by a constant rational change of basis. The boundary construction
computes its characteristic factor directly, and also supplies the
negative factor absent from \eqref{gr:rr-factorization}.
The extension data represented by $b$ in \eqref{gr:rr-residual} is
still needed for the signature equations.

For fixed $n$, every residual eigenvalue is $O_n(p^{n-1})$, since
this bounds the norms of $H_n^\pm(p)$. Hence each geometric weight
with exponent greater than $n-1$ is a simple root of the full
characteristic polynomial for all sufficiently large primes. These
are all $q$ geometric weights for odd $n$ and the largest $q-1$ for
even $n$. For $n=4$, the construction gives $H_4^+=(p(p^2+2)/3)$ and an empty
negative block, recovering the spectrum in \S\ref{gr:example}.

The boundary counts also have a fiber interpretation. A Koszul
calculation gives
\[
 \Tor_i^S(Q,S_+)\cong
 k[x_1,\ldots,x_n]/(x_1^p,\ldots,x_n^p)
 \otimes\bigwedge^i\langle y_1,\ldots,y_n\rangle^{[1]},
\]
with exterior generators in internal degree $p$. These are the Tor
modules of the derived intersection of the Frobenius fiber with the
incidence fiber underlying the resolutions. Their weight counts enter
the boundary reconstruction. No equivariant formality is required.

\section{Explicit signatures}\label{ex:examples}
The signature of $C_4$ was derived by hand in
Theorem~\ref{gr:example-thm}. For $n=5,6$ the same method gives the
following.

\begin{theorem}\label{ex:gr25}
For every prime $p\ge3$,
\[
 s_p(C_5)=\frac{N_5(p)}{168\,D_5(p)},
\]
where
\[
\begin{aligned}
 N_5(p)&=1552p^9+1552p^8+3104p^7+3677p^6+3947p^5\\
       &\qquad+3103p^4+2968p^3+1551p^2+843p+573,\\
 D_5(p)&=12p^9+12p^8+24p^7+29p^6+31p^5+25p^4+24p^3+13p^2+7p+5.
\end{aligned}
\]
In particular $s_3(C_5)=1226365/1595624$ and
$\lim_{p\to\infty}s_p(C_5)=97/126$.
\end{theorem}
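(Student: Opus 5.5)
The plan is to compute $s_p(C_5)$ exactly as in Theorem~\ref{gr:example-thm}, but to use the reduced linear system of Theorem~\ref{gr:rr-eigenrows} rather than a direct Perron computation on the full $(n-2)+(n-3)^2=7$ dimensional family. For $n=5$ we have $d=7$, $q=2$, and $m=3+\lceil 4/2\rceil=5$ duality orbits: $S\{0\},S\{1\},S\{2\}$, the orbit $\{K\{1,1\},K\{2,2\}\}$, and the orbit $\{K\{1,2\},K\{2,1\}\}$. The residual system \eqref{gr:rr-signature} therefore has $\binom{3}{2}=3$ unknowns.

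\textbf{Step 1: the polynomial matrix.} First I assemble $A_5(p)$ over $\mathbb Q[p]$ from the recipe in \S\ref{gr:mechanisms}.
\begin{enumerate}
\item The weight counts $W_5$ and atoms $E_{j;q,e}$ are evaluated by Lemma~\ref{gr:binomial}.
\item The fusion atoms $H_{5,0},H_{5,1},H_{5,2}$ are evaluated by \eqref{gr:H}, which has degree at most $7$.
\item The covariant columns come from \eqref{gr:SC}--\eqref{gr:SK}.
\item The columns $K\{1,1\},K\{2,1\},K\{2,2\}$ come from the Borel and Tate reconstruction \eqref{gr:KK}--\eqref{gr:KS}.
\item The column $K\{1,2\}$ comes by duality \eqref{gr:duality}.
\end{enumerate}
Theorem~\ref{gr:matrix} together with Lemma~\ref{gr:boundary} guarantees that these polynomials are the true multiplicities for every prime $p\ge3$. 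This includes the endpoint $p=3=n-2$, so the final formula is uniform. As consistency checks I will verify the rank identity $\rho A_5(p)=p^7\rho$, using the ranks \eqref{det:ranks}, and the two Riemann--Roch eigenrows $w_0=\rho$ and $w_1$ from \eqref{gr:rr-covariant}--\eqref{gr:rr-syzygy}.

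\textbf{Step 2: the reduced system.} Next I aggregate $A_5(p)$ to the $5\times5$ orbit matrix $B$. I take $E=\{S\{0\},S\{1\}\}$, for which $W_E$ is invertible by the Vandermonde-type minor, and $J$ the remaining three orbits. I then form $H$, $b$ and $M=p^7I-H$ as in \eqref{gr:rr-residual}. Solving $Mz=b$ by Cramer's rule and substituting into $s_p(C_5)=e^tW_E^{-1}(e-W_Jz)$ gives a rational function of $p$. By Corollary~\ref{cor:ffrt-rational} this single expression is the signature for every admissible prime. I will then cancel the common factors, chiefly powers of $p$ and factors such as $p^2-1$ that appear through the $H$ atoms. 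The result should be brought to the normalized form $N_5/(168D_5)$, and the leading coefficients give the check $1552/(168\cdot12)=97/126$. This limit must agree with Corollary~\ref{gr:limit}, whose $[p^7]a_1(C_5)$ computation is independent of the Perron calculation. Evaluating at $p=3$ then yields $1226365/1595624$.

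\textbf{Main obstacle.} The hard step is Step 1 for the syzygy columns. That is where the Jordan-block reconstruction \eqref{gr:P}--\eqref{gr:Z} and the subtraction \eqref{gr:KS} involve many signed terms, and a single bookkeeping error would corrupt every entry. To guard against this, I will also compute the characteristic factorization \eqref{gr:full-factorization} independently, with $d_+=3$ and $d_-=2$, using the boundary matrices $H_5^\pm$. These are built only from the $E$-atoms, so they provide a check on $A_5(p)$ that does not depend on the $W$ and $H$ inputs. I will also spot-check a few numerical primes by computer, in the spirit of the accompanying program. The linear algebra of Step 2 is routine but heavy, so it would be delegated to exact rational computation with no interpolation, as the paper indicates.
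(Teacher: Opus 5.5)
Your proposal is correct, but it takes a different route through the paper's machinery than the paper's proof does.

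\textbf{Your route.} You assemble the whole matrix $A_5(p)$ from \eqref{gr:SC}--\eqref{gr:KS}. You then aggregate it to the $5\times5$ orbit matrix $B$ and solve the Riemann--Roch reduced system \eqref{gr:rr-residual}--\eqref{gr:rr-signature}.

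\textbf{The paper's route.} The paper never assembles $A_5(p)$. It takes the residual block to be the boundary matrix $H_5^+(p)$ of \eqref{gr:boundary-matrices}, which is built from the $E$-atoms alone. It writes the Perron vector as $v=u_5+Q_5z$ with $u_5=\frac56e_0+\frac1{12}e_1$, which is exactly your $W_E^{-1}e$. It takes the right-hand side $b_5=[A_5u_5-p^7u_5]_{\mathcal B^+}$ from the two covariant-source columns $S\{0\},S\{1\}$ only. Lemma~\ref{gr:boundary-transfer} then supplies $A_5Q_5=Q_5H_5^+$. Since $H_5^+$ and your $H$ differ by a constant rational change of basis, the two routes produce the same three-unknown system.

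\textbf{What each buys.} Your route needs only Theorem~\ref{gr:matrix} and the Riemann--Roch rows, not the boundary-transfer lemma. It also yields the full matrix, and hence every limiting summand proportion. The cost is that it forces you through exactly the step you flag as the main obstacle. The $J$-rows of $B$ include the $S\{2\}$-row of the $K\{1,1\}$ and $K\{2,1\}$ columns. That row requires the $W$- and $H$-laden Jordan reconstruction and subtraction \eqref{gr:P}--\eqref{gr:KS}, with entries of degree up to $7$. The paper's route avoids this reconstruction entirely. It works with entries of degree at most $4$ and produces the compact certified system of Example~\ref{ex:reduced-C5}. Your planned cross-check via $H_5^\pm$ is in fact the paper's main ingredient.

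\textbf{One small correction.} Validity of your Cramer formula at each prime, including $p=3$, does not come from Corollary~\ref{cor:ffrt-rational}, which only asserts rationality. It comes from $\det M(p)\ne0$, which follows from simplicity of the Perron root. After you cancel common factors, you also need $D_5(p)\ne0$, which holds because $D_5$ has positive coefficients.
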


\begin{theorem}\label{ex:gr26}
For every prime $p\ge5$,
\[
 s_p(C_6)=\frac{N_6(p)}{1512\,D_6(p)},\qquad
 N_6(p)=\sum_{j=0}^{30}n_jp^j,\quad D_6(p)=\sum_{j=0}^{30}d_jp^j,
\]
with the coefficients of Table~\ref{tab:gr26}. Moreover
\[
 s_p(C_6)=\frac{15067}{22680}-\frac{899}{48600\,p^4}+O(p^{-5}).
\]
\end{theorem}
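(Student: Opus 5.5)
The plan is to follow the same pipeline as for $C_5$, now with $n=6$. First, assemble $A_6(p)$ symbolically in $\mathbb Q[p]$. The family $\mathscr G_6$ has $(n-2)+(n-3)^2=13$ members: $S\{0\},\dots,S\{3\}$ and $K\{j,k\}$ for $1\le j,k\le3$. Every entry is obtained from the finite formulas of Theorem~\ref{gr:matrix}:
\begin{itemize}
\item \eqref{gr:SC}--\eqref{gr:SK} for the covariant columns;
\item \eqref{gr:KK}--\eqref{gr:KS} for the syzygy columns with $k\le j$;
\item \eqref{gr:duality} for the columns with $k>j$.
\end{itemize}
The atoms are the polynomials $W_6$ and $E_{j;q,e}$ from Lemma~\ref{gr:binomial}, and $H_{6,q}$ for $0\le q\le3$ from Lemma~\ref{gr:fusion}. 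As sanity checks on the assembled matrix, we would verify the rank identity $\rho A_6(p)=p^9\rho$ using \eqref{det:ranks}, and the Riemann--Roch eigenrows of Theorem~\ref{gr:rr-eigenrows}.

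Next, pass to the duality quotient $B_6(p)$. Here $m=n-2+\lceil 9/2\rceil=9$ orbits and $q=3$. Instead of computing the Perron vector of a $13\times13$ matrix, we use the reduction \eqref{gr:rr-residual}--\eqref{gr:rr-signature}. We form $W_E$ from the rows $w_0,w_1,w_2$ on $S\{0\},S\{1\},S\{2\}$; this block is invertible by the minor computed in Theorem~\ref{gr:rr-eigenrows}. This produces a $6\times6$ system $Mz=b$ with $M=p^9I-H$. We then solve it by Cramer's rule over $\mathbb Q(p)$ and substitute into $s_p(C_6)=e^tW_E^{-1}(e-W_Jz)$. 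The result is a single rational function, which is valid at every prime $p\ge5=\max\{n-2,3\}$ because of two facts:
\begin{itemize}
\item the entries agree with the polynomials throughout the range by Theorem~\ref{gr:matrix};
\item $\det M(p)\ne0$ at each such prime, since the Perron root is simple (Theorem~\ref{thm:ffrt-spectrum}).
\end{itemize}
After clearing denominators and cancelling the common polynomial factor, we expect numerator and denominator of degree $30$, normalized as $N_6/(1512D_6)$ with the coefficients of Table~\ref{tab:gr26}. A useful cross-check is the factorization \eqref{gr:full-factorization}: the determinant $\det M$ should be compatible with $P_6^+(p^9,p)$ computed independently from $H_6^+$.

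For the expansion at infinity, we expand $N_6/(1512D_6)$ in powers of $p^{-1}$. The constant term must equal $15067/22680$. This value is forced independently by Corollary~\ref{gr:limit} together with Theorem~\ref{lim:comparison}, so it serves as a check. The statement then amounts to showing that the coefficients of $p^{-1},p^{-2},p^{-3}$ vanish and that the $p^{-4}$ coefficient is $-899/48600$. This is read off from the top five coefficients $n_{30},\dots,n_{26}$ and $d_{30},\dots,d_{26}$.

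The main obstacle is the exact symbolic assembly of the syzygy columns. The covariant rows of a $K\{j,k\}$ column require the Tate counts $\mathcal T$, the sums $P$ and $Z$, Euler induction, and the subtraction \eqref{gr:KS}. All of these are large polynomial manipulations, so this step is done with the ancillary program using exact rational arithmetic, with no interpolation. We would validate it in three ways:
\begin{itemize}
\item nonnegativity of the entries at sample primes;
\item the rank identity;
\item at the endpoint $p=5$, comparison with Lemma~\ref{gr:boundary}.
\end{itemize}
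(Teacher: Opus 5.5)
Your argument is correct, but it follows a different computational route from the paper's proof.

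\textbf{Your route.} You assemble all thirteen columns of $A_6(p)$ symbolically and pass to the orbit matrix $B_6(p)$. You then apply the Riemann--Roch reduction \eqref{gr:rr-residual}--\eqref{gr:rr-signature}, so your $6\times6$ system uses $H=B_{JJ}-B_{JE}W_E^{-1}W_J$.

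\textbf{The paper's route.} The paper never assembles the full matrix.
\begin{enumerate}
\item It takes $H_6^+$ from the boundary construction \eqref{gr:boundary-matrices}, which is built from the $E$-atoms alone and has entries of degree at most $5$.
\item It fixes $u_6=\frac{101}{144}e_0+\frac5{36}e_1+\frac1{144}e_2$, which satisfies $w_0(u_6)=1$ and $w_1(u_6)=w_2(u_6)=0$, and writes the Perron vector as $v=u_6+Q_6z$.
\item The right-hand side $b_6=[A_6(p)u_6-p^9u_6]_{\mathcal B^+}$ needs only the covariant-source columns $S\{0\},S\{1\},S\{2\}$.
\item Boundary transfer, $A_6Q_6=Q_6H_6^+$, turns the eigenvector equation into $(p^9I-H_6^+)z=b_6$.
\item The proof closes by certifying the cleared identities $\det M_6=g_6D_6$ and $1512\bigl((u_6)_0\det M_6+c_6\adj(M_6)b_6\bigr)=g_6N_6$, with an explicit $g_6$.
\end{enumerate}

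\textbf{Comparison.} Since $H$ and $H_6^+$ differ by a constant rational change of basis, both routes give the same rational function. Yours depends only on Theorem~\ref{gr:matrix} and Theorem~\ref{gr:rr-eigenrows}, not on Lemma~\ref{gr:boundary-transfer}. The paper's route avoids exactly the step you flag as the main obstacle: the covariant rows of syzygy sources with their $W_n$ and $H_{n,q}$ inputs. It also works throughout with lower-degree entries.

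\textbf{Corrections.}
\begin{itemize}
\item For $n=6$ you have $\max\{n-2,3\}=4$, not $5$. Since $4$ is not prime, Lemma~\ref{gr:boundary} is vacuous here, and $p=5$ already lies in the generic range $p\ge n-1$. That validation bullet should be dropped.
\item Rather than saying you ``expect'' the final form, state the cleared polynomial identities you actually verify. As the paper notes, once $\det M(p)\ne0$ these automatically justify cancelling the common factor at every prime in range.
\end{itemize}
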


\begin{table}[ht]\centering\small
\begin{tabular}{r|r|r@{\qquad}r|r|r}
$j$&$n_j$&$d_j$&$j$&$n_j$&$d_j$\\\hline
0&$-2203200$&$-2496$&16&$-79081424$&$-63896$\\
1&$-8786880$&$-9552$&17&$187168006$&$203366$\\
2&$-21552264$&$-23400$&18&$421151974$&$437262$\\
3&$-46964772$&$-50148$&19&$615200326$&$628962$\\
4&$-89401548$&$-94908$&20&$738355270$&$749550$\\
5&$-152948832$&$-160684$&21&$779041292$&$786528$\\
6&$-241372020$&$-251996$&22&$761408670$&$766134$\\
7&$-344233233$&$-356673$&23&$667306968$&$669240$\\
8&$-470178765$&$-484345$&24&$561043200$&$561600$\\
9&$-578750905$&$-592425$&25&$409481712$&$408960$\\
10&$-687982955$&$-700235$&26&$297903276$&$297180$\\
11&$-731863346$&$-740450$&27&$173571840$&$172800$\\
12&$-746891291$&$-750875$&28&$108482400$&$108000$\\
13&$-659348687$&$-657735$&29&$43392960$&$43200$\\
14&$-531128519$&$-523915$&30&$21696480$&$21600$\\
15&$-313172387$&$-301443$&&&\\
\end{tabular}
\caption{The coefficients of $N_6$ and $D_6$ in Theorem~\ref{ex:gr26}.}
\label{tab:gr26}
\end{table}

\begin{proof}[Proofs of Theorems~\ref{ex:gr25} and~\ref{ex:gr26}]
We use the boundary matrices of Section~\ref{gr:characteristic} to
reduce the calculations to three and six unknowns, respectively.
Write $d=2n-3$, $q=\lfloor n/2\rfloor$, and let $Q_n$ be the
matrix whose columns are the vectors of $\mathcal B^+$, ordered by
increasing $j$ and then increasing $k$. Choose the following vectors in the covariant span:
\[
 u_5=\frac56e_0+\frac1{12}e_1 \qquad \text{and} \qquad
 u_6=\frac{101}{144}e_0+\frac5{36}e_1+\frac1{144}e_2.
\]
Formula~\eqref{gr:rr-covariant} gives
$w_0(u_n)=1$ and $w_i(u_n)=0$ for $1\le i<q$.
Thus the normalized Perron vector has the form $v=u_n+Q_nz$.
Set
\[
 b_n=[A_n(p)u_n-p^du_n]_{\mathcal B^+},\qquad
 c_n=(Q_n)_{0,\bullet},\qquad \text{and} \qquad
 M_n=p^dI-H_n^+(p).
\]
The Riemann--Roch identities and duality place the vector defining
$b_n$ in $\mathcal N_n^+$. Boundary transfer gives
$A_n(p)Q_n=Q_nH_n^+(p)$, so the eigenvector equation becomes
\[
 M_nz=b_n \qquad \text{and} \qquad s_p(C_n)=(u_n)_0+c_nz.
\]
The Perron root is simple, hence $M_n$ is invertible at every prime
in the stated range.

The accompanying program constructs $H_n^+$ directly from the
$E$-counts in \eqref{gr:boundary-vector} and \eqref{gr:boundary-matrices}.
To obtain $b_n$, it uses only the first $q$ covariant-source columns
from \eqref{gr:SC}--\eqref{gr:SK}. It does not assemble the full
Frobenius matrix or the negative residual block. The resulting exact
linear algebra certifies the polynomial identities $\det M_n=g_nD_n$ and
\[\kappa_n\bigl((u_n)_0\det M_n+c_n\adj(M_n)b_n\bigr)=g_nN_n,\]
where $\kappa_5=168$, $\kappa_6=1512$, and
\[
 g_5=\frac{p^6(p-1)^3(p+1)(p^2+p+1)}{12}, \qquad 
 g_6=\frac{p^{14}(p-1)^5(p+1)^3(p^2+1)}{21600}.
\]
Here $N_n,D_n$ are exactly the polynomials in the statements.
Cramer's rule now gives $s_p(C_n)=N_n/(\kappa_nD_n)$.
Since $g_n$ and $\det M_n$ are nonzero at every prime in range, these
fractions are valid throughout that range. Expanding the verified
fraction for $n=6$ at $p=\infty$ gives the stated asymptotic.
All calculations use exact polynomial arithmetic; no approximation or interpolation is used.
\end{proof}

\begin{example}[The three-variable system for $C_5$]\label{ex:reduced-C5}
In the order $(y_{11},y_{21},y_{22})$, the residual matrix is
\[
 H_5^+(p)=\frac p{24}
 \begin{pmatrix}
 -p^3+14p^2+p+10 & -(p-1)(p^2-9p+2) & -(p^2-1)(p+2)\\
 2p(p^2-1) & 2p(p^2+11) & 2p(p^2-1)\\
 (p-3)(p-1)(13p+10) & (p-1)(13p^2-17p-6) & (p+1)(13p^2-7p+6)
 \end{pmatrix}.
\]
The right-hand side and free-coordinate row are
\[
 b_5=\frac{p(p^2-1)}{2520}
 \begin{pmatrix}
 -(2p^4+2p^2-5)\\0\\3(12p^4+12p^2+5)
 \end{pmatrix},\qquad c_5=(-10,-5,-5).
\]
Thus solving $(p^7I-H_5^+)z=b_5$ gives
$s_p(C_5)=5/6-10z_1-5z_2-5z_3$.
The analogous six-variable system for $C_6$, together with polynomial
certificates for both systems, is recorded in the ancillary package.
\end{example}

As independent consistency checks, the full-matrix program solves the full
normalized system over $\mathbb Q$ at $p=3,5,7$ for $C_4,C_5$ and at
$p=5,7$ for $C_6$, and verifies the Riemann--Roch eigenrow identities
for $4\le n\le8$. These
checks merely supplement the symbolic identities. They are not used to
interpolate the formulas.

\section{Concluding remarks}\label{sec:scope}
For $p\ge\max\{n-2,3\}$, the matrix $A_n(p)$ computes every Frobenius
multiplicity of $C_n$ by iteration, and hence the asymptotic
proportion of every summand. The characteristic bound is that of
\cite{RSVdB2}, so smaller characteristics require further arguments.
Theorem~\ref{gr:rr-eigenrows} reduces the signature to a linear system
in $\binom{n-2}{2}$ unknowns, but a direct scalar formula for $s_p(C_n)$
for general $n$ remains open. The boundary construction in
Theorem~\ref{gr:characteristic-polynomial} determines the complete
characteristic polynomial, but full diagonalizability and an explicit
description of all eigenspaces remain separate questions.

In the sequel we apply these results to the generic determinantal rings
$k[X_{a\times b}]/I_3(X)$. They are weight-zero parts of the rings
$C_{a+b}$ for a one-dimensional torus, and retaining that torus
character through the calculations of this paper descends them.

The reduced-signature program constructs the systems used in
Section~\ref{ex:examples} from the boundary counts and the first
$q$ covariant-source columns. The ancillary package includes their
exact matrices, right-hand sides, rational solutions, and cleared
polynomial certificates, together with a short certificate checker.
The full-matrix program implements the finite assembly formulas of
Section~\ref{gr:section} and Appendix~\ref{gr:appendix} as an independent
check. It also checks the counting functions $W_n$ and $H_{n,q}$ against direct
convolution and fusion multiplication for $4\le n\le10$, checks the
first free multiplicity \eqref{gr:a1} and Corollary~\ref{gr:limit} for
$4\le n\le10$, and verifies Theorem~\ref{gr:rr-eigenrows} for $4\le n\le8$.
Instructions, the exact matrices and the
normalized Perron vectors on duality orbits are included in the
ancillary package; see \texttt{anc/README.txt} for reproduction commands.
A separate check implements
\eqref{gr:boundary-vector} and \eqref{gr:boundary-matrices} literally.
For $4\le n\le7$, exact comparisons with the independently assembled full
matrices verify the boundary-transfer identities \eqref{gr:boundary-action}
as polynomial identities, and the entire characteristic polynomial at
several primes. These serve merely as implementation checks. 

\appendix
\section{The syzygy-column formulas}\label{gr:appendix}
This appendix records the finite formulas for the columns of a syzygy
source $K\{j,k\}$, used in \S\S\ref{gr:kk}--\ref{gr:ks}. It suffices to
consider $1\le k\le j\le n-3$, as sources with $k>j$ are reduced by 
duality \eqref{gr:duality}. Besides $W_n$ and $H_{n,q}$, the formulas use
the bounded-composition atom
\begin{equation}\label{gr:E}
 E_{j;q,e}=C_{n,p}\bigl((q+e)p-j-2e\bigr),\qquad e\in\{0,1\},
\end{equation}
which counts the tuples $t\in[0,p-1]^n$ of total degree $(q+e)p-j-2e$
carrying the surviving class described below.

Two facts organize the formulas. First, the syzygy targets require no
map-rank calculation. After fixing a tuple and a torus weight, the
spectral sequence \cite[(6.17)]{RSVdB2} has its $E_3$ page concentrated
in one column \cite[Lemma~7.8]{RSVdB2}. Its $E_1$ dimensions are fusion
coefficients, and the full complex has at most one surviving class, in
position $-q$, at tuple degree $(q+e)p-j-2e$ and row parity
$r\equiv q+e\pmod2$. Alternating dimensions to the left of position $-k$
give the generic term $B$ below, and adding or removing the unique class
gives the indicators in \eqref{gr:T} and \eqref{gr:O}.
\cite[Proposition~7.6]{RSVdB2} passes from the truncated complex to the
Borel-level syzygy, \cite[Corollary~8.4]{RSVdB2} induces to $G_1$,
\cite[Proposition~3.5(6)]{RSVdB2} identifies the stable syzygy of $M_r$
in cohomological degree $\ell$ with $K\{r,\ell\}$, and
\cite[Theorem~4.3]{RSVdB2} identifies the resulting ordinary-cohomology
counts with the syzygy-target multiplicities. Second, the formulas
\eqref{gr:P}--\eqref{gr:KS} for the tilt-free part are the expansion of
the Jordan-block identity \eqref{det:jordan-invariants}, as explained in
\S\ref{gr:ks}.

For $0\le m\le n$ with $m\ne j+1$, put
\[
 s_m=\begin{cases}-m,&m\le j,\\1-m,&m\ge j+2,\end{cases}
 \qquad \text{and} \qquad
 h_m=\begin{cases}j-m,&m\le j,\\m-j-2,&m\ge j+2.\end{cases}
\]
The generic term is
\begin{equation}\label{gr:B}
 B_{j,k}=\sum_{\substack{0\le m\le n,\ m\ne j+1\\-(n-1)\le s_m<-k}}
 (-1)^{-k-1-s_m}\binom nm H_{n,h_m}.
\end{equation}
In the weighted setting of the sequel this term depends on the parity of
the cohomological row, but here it does not, since
$H_{n,p-2-q}=H_{n,q}$. For $\ell=0,1$, the Tate numerators are
\begin{equation}\label{gr:T}
\begin{split}
 \mathcal T_{j,k;\ell,r}={}&[r=\ell]\,B_{j,k}
 +\sum_{e=0}^1\sum_{q=k+1}^{n-1-e}
   (-1)^{q-k}[r=\ell][\ell\equiv q+e]\,E_{j;q,e}\\
 &+\sum_{e=0}^1\sum_{q=k+1}^{n-1-e}
   [r=q+\ell-k-1][\ell-k-1\equiv e]\,E_{j;q,e},
\end{split}
\end{equation}
and for $1\le\ell\le n-3$, the ordinary-cohomology counts are
\begin{equation}\label{gr:O}
\begin{split}
 \mathcal O_{j,k;\ell,r}={}&[r=\ell]\,B_{j,k}
 +\sum_{e=0}^1\sum_{q=k+1}^{n-1-e}
   (-1)^{q-k}[r=\ell][\ell\equiv q+e]\,E_{j;q,e}\\
 &+[\ell\ge k]\sum_{e=0}^1\sum_{q=0}^{\min(k,n-1-e)}
   [r=q+\ell-k][\ell-k\equiv e]\,E_{j;q,e}\\
 &+[\ell\le k]\sum_{e=0}^1\sum_{q=k+1}^{n-1-e}
   [r=q+\ell-k-1][\ell-k-1\equiv e]\,E_{j;q,e}.
\end{split}
\end{equation}
Congruences are modulo $2$, and brackets denote indicators.

Set $P_{j,k;c}=0$ for $c<-n$, and otherwise
\begin{equation}\label{gr:P}
\begin{split}
 P_{j,k;c}={}&\sum_{m=0}^{n}(-1)^m\binom nm
   \bigl(\mathcal T_{j,k;1,c+m}-\mathcal T_{j,k;0,c+m}\bigr)\\
 &+\sum_{m=k+1}^{n}(-1)^{m-k-1}\binom nm
   \bigl(W_n(pc-j+m)-W_n(pc-j+m-2)\bigr).
\end{split}
\end{equation}
The reduced pre-induction character has coefficients
\begin{equation}\label{gr:Z}
 Z_{j,k;c}=\sum_{t=0}^{n-2}P_{j,k;c-2t}
   +\sum_{m=0}^{n}(-1)^m\binom nm\,\mathcal T_{j,k;0,c+m},
 \qquad \text{with} \qquad -(n-1)\le c\le n-3.
\end{equation}
The syzygy-target entries are
\begin{equation}\label{gr:KK}
 A_{K\{r,h\},K\{j,k\}}=\mathcal O_{j,k;h,r}.
\end{equation}
For $1\le r,h\le n-3$ and $h+1\le m\le n-1$, put
$(\eta_{r,m},\zeta_{r,m})=(r-m,m)$ if $m\le r$, and
$(\eta_{r,m},\zeta_{r,m})=(m-r-1,m+1)$ if $m>r$. For $0\le l\le n-3$ the
covariant-target entries are
\begin{equation}\label{gr:KS}
\begin{split}
 A_{S\{l\},K\{j,k\}}={}&Z_{j,k;l}-Z_{j,k;-l-2}\\
 &-\sum_{r,h=1}^{n-3}\sum_{m=h+1}^{n-1}
   (-1)^{m-h-1}[\eta_{r,m}=l]\binom n{\zeta_{r,m}}
   \mathcal O_{j,k;h,r}.
\end{split}
\end{equation}
Every expression is a finite sum, with ranges independent of $p$, of the
atoms $H_{n,q}$, $W_n$ and $E_{j;q,e}$ at arguments affine in $p$.

\subsection*{The boundary vector}
The construction in \S\ref{gr:characteristic} uses only the $E$-terms
of these formulas. Write $\mathcal T^E$ and $\mathcal O^E$ for
\eqref{gr:T} and \eqref{gr:O} with the generic term $B_{j,k}$ omitted.
For $\ell=0,1$, set
\[
 J_{j,k;\ell}(w)=\sum_r\mathcal T^E_{j,k;\ell,r}w^r
 \qquad \text{and} \qquad
 Z^E_{j,k}(w)=(1-w^{-1})^n
 \frac{J_{j,k;1}(w)-w^2J_{j,k;0}(w)}{1-w^2}.
\]
This is the specialization of \eqref{gr:P}--\eqref{gr:Z} retaining
only the $E$-atoms. Division is expanded at zero. All required
coefficients are obtained by replacing $(1-w^2)^{-1}$ with
$\sum_{v=0}^{n-2}w^{2v}$. The numerator has no exponent below $1-n$,
and the requested exponents range from $-(n-1)$ to $n-3$.
Using the tail classes of \eqref{gr:boundary-classes}, the full vector is
\begin{equation}\label{gr:boundary-vector}
 \mathcal E_{jk}(p)=
 \sum_{a=0}^{n-3}\bigl([w^a]Z^E_{j,k}-[w^{-a-2}]Z^E_{j,k}\bigr)e_a
 +\sum_{r,h=1}^{n-3}\mathcal O^E_{j,k;h,r}x_{rh}.
\end{equation}
The use of $x_{rh}$, for all output pairs including $r<h$, incorporates
exactly the covariant subtraction in \eqref{gr:KS}. Together with
\eqref{gr:E}, this specifies every input to
\eqref{gr:boundary-matrices} using only bounded-composition counts.

\section*{Acknowledgements}
Various AI models were used during the preparation of this work, being especially useful with finding the closed form expression of Corollary \ref{gr:limit}, finding a geometric rationale for the decomposition of the characteristic polynomial in Theorem \ref{gr:characteristic-polynomial}, and in building the computer checks of Section \ref{ex:examples}. They were also used extensively in drafting, proofreading, and editing this document. The author takes full responsibility for all mathematical content.

\vspace{.1in}

\end{document}